\documentclass[11pt]{amsart}

\usepackage[margin=1.25in]{geometry}

\usepackage{amsmath, amssymb, amsthm, graphicx, enumerate, tikz, float, color}
\usepackage{mathtools, comment}
\usepackage{wasysym}
\usepackage[misc]{ifsym}
\usepackage[colorlinks]{hyperref}

\hypersetup{citecolor=blue}
\usetikzlibrary{matrix,arrows,decorations.pathmorphing}

\newtheorem{theorem}{Theorem}[section]
\newtheorem{lemma}[theorem]{Lemma}
\newtheorem{corollary}[theorem]{Corollary}
\newtheorem{proposition}[theorem]{Proposition}

\newtheorem*{thmmain}{Main Theorem}

\theoremstyle{definition}
\newtheorem{definition}[theorem]{Definition}

\theoremstyle{remark}

\DeclareMathOperator{\cd}{cd}
\DeclareMathOperator{\Irr}{Irr}

\numberwithin{equation}{section}

\newcommand{\abs}[1]{\lvert#1\rvert}

\newcommand{\blankbox}[2]{%
  \parbox{\columnwidth}{\centering

    \setlength{\fboxsep}{0pt}%
    \fbox{\raisebox{0pt}[#2]{\hspace{#1}}}%
  }%
}

\makeatletter
\@namedef{subjclassname@2020}{%
  \textup{2020} Mathematics Subject Classification}
\makeatother

\begin{document}

\allowdisplaybreaks

\title{Classifying character degree graphs with seven vertices}

\author{Jacob Laubacher}
\address{Department of Mathematics, St. Norbert College, De Pere, WI 54115}
\email{jacob.laubacher@snc.edu}

\author{Mark Medwid}
\address{Department of Mathematical Sciences, Rhode Island College, Providence, RI 02908}
\email{mmedwid@ric.edu}

\author{Dylan Schuster}
\address{Department of Mathematics, St. Norbert College, De Pere, WI 54115}
\email{dylan.schuster@snc.edu}

\subjclass[2020]{Primary 20C15; Secondary 05C25, 20D10}

\date{\today}

\keywords{Character degree graphs, Finite solvable groups, Classification.\\\indent\emph{Corresponding author.} Jacob Laubacher \Letter~\href{mailto:jacob.laubacher@snc.edu}{jacob.laubacher@snc.edu} \phone~920-403-2961.}

\thanks{The first and third authors received funding from the Poss-Wroble Fellowship at St. Norbert College}

\begin{abstract}
We study here the graphs with seven vertices in an effort to classify which of them appear as the prime character degree graphs of finite solvable groups. This classification is complete for the disconnected graphs. Of the 853 non-isomorphic connected graphs, we were able to demonstrate that twenty-two occur as prime character degree graphs. Two are of diameter three, while the remaining are constructed as direct products. Forty-four graphs remain unclassified. 
\end{abstract}

\maketitle

%%%%%%%%%%%%%%%%%%%%%%%%%%%%%%%%%%%%%%%%%%%%%%%%%%%%%%%%%%%%%%%%%%%%%%%%%%%%%%%%%%
\section{Introduction}
%%%%%%%%%%%%%%%%%%%%%%%%%%%%%%%%%%%%%%%%%%%%%%%%%%%%%%%%%%%%%%%%%%%%%%%%%%%%%%%%%%

In this paper we fix $G$ to be a finite solvable group. We write $\Irr(G)$ to denote the set of irreducible characters associated to $G$, and then by convention $\cd(G)=\{\chi(1)~|~\chi\in\Irr(G)\}$. One can then consider the set $\rho(G)$, which is defined to be all the prime numbers that divide some character in $\cd(G)$. The prime character degree graph of $G$, denoted $\Delta(G)$, is therefore the simple graph whose vertex set is $\rho(G)$. Thus, in this context, the words prime and vertex become synonymous. There is then an edge between two vertices $p$ and $q$ in $\Delta(G)$ if there exists some character $a\in\cd(G)$ such that $pq$ divides $a$. Prime character degree graphs have been studied quite extensively, both historically and recently (see \cite{I} for background or \cite{L4} for a comprehensive overview).

Prime character degree graphs can be considered for any group, but here we focus specifically on solvable groups, allowing the use of P\'alfy's condition. This landmark result from \cite{P} essentially mandates that any prime character degree graph that corresponds to a solvable group cannot have a triangle in its complement graph. This was generalized more recently in \cite{A} from a triangle to any odd cycle. We will make frequent use of these results to fulfill the goal of this paper, which is to study graphs that have seven vertices.

The main question we wish to answer is to determine which graphs with seven vertices can occur as the prime character degree graph of a finite solvable group. Classifications of this nature have been studied previously. Some of the first graphs to be considered were proven to be character degree graphs in \cite{Zh}, but one can see \cite{H2} where Huppert lists all of the graphs with four vertices or fewer that can arise as $\Delta(G)$ for some solvable group $G$. Lewis classifies all graphs with five vertices in \cite{L3} with one notable exception, and Bissler et al. then classify graphs with six vertices in \cite{BLL}. They had nine graphs that went unclassified in their paper.

Following \cite{MP}, one can list a staggering 853 connected graphs with seven vertices that are non-isomorphic. Using known results from the literature (like from \cite{A}), we can reduce this number to only eighty-five eligible graphs that can potentially occur as the prime character degree graph of some solvable group. After going through these remaining graphs, we determine that twenty-two of the connected graphs with seven vertices occur as $\Delta(G)$, as well as two disconnected graphs. After eliminating nineteen graphs, we are left with forty-four graphs that are still under consideration. Our results can be collated into the following:

\begin{thmmain}
The graphs with seven vertices that arise as $\Delta(G)$ for some solvable group $G$ are precisely those graphs in Figures \ref{figDO}, \ref{figA}, \ref{figBDT}, \ref{figB15}, \ref{figBDP}, and \ref{figCDP}, and possibly those in Figures \ref{figBM}, \ref{figC19}, and \ref{figCM}.
\end{thmmain}

%%%%%%%%%%%%%%%%%%%%%%%%%%%%%%%%%%%%%%%%%%%%%%%%%%%%%%%%%%%%%%%%%%%%%%%%%%%%%%%%%%
\section{Preliminaries}
%%%%%%%%%%%%%%%%%%%%%%%%%%%%%%%%%%%%%%%%%%%%%%%%%%%%%%%%%%%%%%%%%%%%%%%%%%%%%%%%%%

Many of the results that we employ throughout this article are well-known. In the case where we use certain work more frequently, we put it in this section, so as to stay as self-contained as possible while simultaneously making it easier on the reader. We shall first go through traditional results like P\'alfy's condition, and then review work that aids in classifications: results on disconnected graphs, results concerning graphs with diameter three, and results in relation to admissible vertices. We then round out the section with setting notation and conventions used throughout the paper.

%%%%%%%%%%%%%%%%%%%%%%%%%%%%%%%%%%%%%%%%%
\subsection{Families of graphs}
%%%%%%%%%%%%%%%%%%%%%%%%%%%%%%%%%%%%%%%%%

P\'alfy's condition, stated fully below, gives a great starting point for how we can show that some graphs do not occur as the prime character degree graph of any solvable group.

\begin{lemma}[P\'alfy's condition from \cite{P}]\label{PC}
Let $G$ be a solvable group and let $\pi$ be a set of primes contained in $\Delta(G)$. If $|\pi|=3$, then there exists an irreducible character of $G$ with degree divisible by at least two primes from $\pi$. (In other words, any three vertices of the prime character degree graph of a solvable group span at least one edge.)
\end{lemma}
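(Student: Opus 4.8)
The plan is to argue by contradiction, passing to a minimal counterexample and dissecting it with Clifford theory. Suppose the lemma fails; among all solvable groups possessing three primes $p,q,r\in\rho(G)$ such that no irreducible degree is divisible by two of them, choose $G$ of minimal order. For any nontrivial normal subgroup $M$ we have $\cd(G/M)\subseteq\cd(G)$, so $\Delta(G/M)$ is an induced subgraph of $\Delta(G)$; minimality of $G$ then forces $\{p,q,r\}\not\subseteq\rho(G/M)$, i.e. the three primes can only be ``witnessed'' by characters that do not factor through any proper quotient. The first genuine reduction is to show that $G$ has a \emph{unique} minimal normal subgroup: if $M_1\neq M_2$ were two of them, then $M_1\cap M_2=1$ embeds $G$ as a subdirect product of $G/M_1$ and $G/M_2$, and analyzing how the degrees of $G$ arise from those of the two factors (each of which omits one of $p,q,r$ by minimality) yields a contradiction. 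Hence $G$ has a single minimal normal subgroup $N$, which, as $G$ is solvable, is an elementary abelian $s$-group for one prime $s$.

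Next I would set up the module picture. Write $\overline G=G/C_G(N)$, which acts faithfully and irreducibly on $V=N$, regarded as an $\mathbb F_s\overline G$-module, and identify $\Irr(N)$ with the dual module so that the $\overline G$-orbits on $\Irr(N)$ correspond to $\overline G$-orbits on $V$. For each linear $\theta\in\Irr(N)$ with inertia group $I=I_G(\theta)$, Clifford correspondence produces the irreducible characters of $G$ lying over $\theta$ as induced characters $\psi^G$ with $\psi^G(1)=\abs{G:I}\,\psi(1)$, and Gallagher's theorem describes the characters of $I$ over an extendible $\theta$ in terms of $\Irr(I/N)$. Using these, I would translate the hypothesis ``no degree is divisible by two of $p,q,r$'' into a combined constraint on the orbit sizes $\abs{G:I_G(\theta)}$ and on the degrees occurring inside the stabilizers, while keeping track of whether $s$ does or does not belong to $\{p,q,r\}$.

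The core step, and the one I expect to carry essentially all the difficulty, is the linear-group analysis: I must show that a solvable group acting faithfully and irreducibly on $V$ cannot distribute three primes so that each divides some orbit length or stabilizer degree while no two ever meet in a single degree. This is where the real content of P\'alfy's theorem lies. The argument requires the representation theory of completely reducible solvable linear groups---bounding and comparing orbit sizes via Hall subgroups, exploiting coprime action together with fixed-point and regular-orbit results, and disposing of the finitely many small exceptional configurations by direct inspection. The case split according to whether $s\in\{p,q,r\}$ must be handled separately, since a prime equal to $s$ contributes through $\theta$ itself rather than through an orbit index.

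Combining the contradiction obtained from the module analysis with the structural reductions completes the proof by minimality. Finally, the parenthetical reformulation is immediate from the definitions used in the introduction: an edge $pq$ of $\Delta(G)$ is, by construction, precisely the existence of a degree in $\cd(G)$ divisible by both $p$ and $q$, so ``some degree divisible by two of the three primes'' and ``the three vertices span at least one edge'' say the same thing.
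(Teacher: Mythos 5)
The paper does not prove this lemma at all: it is imported verbatim from P\'alfy's paper \cite{P} and used as a black box, so there is no internal argument to compare yours against. Judged on its own terms, your proposal is a reasonable reconstruction of the \emph{shape} of P\'alfy's original argument (minimal counterexample, reduction to a unique minimal normal subgroup, Clifford correspondence over the elementary abelian module $V$), but it is not a proof. The step you yourself flag as carrying ``essentially all the difficulty'' --- showing that a solvable group acting faithfully and irreducibly on $V$ cannot distribute three primes among orbit lengths and stabilizer degrees without two of them ever meeting in a single character degree --- is precisely the content of the theorem, and you state only that it ``requires'' Hall subgroups, coprime action, regular-orbit results, and inspection of small exceptional configurations, without executing any of that analysis. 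A proof outline whose core step is a declaration of the tools one would use is a gap, not an argument; in particular none of the exceptional configurations are identified, and the case split on whether $s\in\{p,q,r\}$ is announced but never carried out.

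Two smaller points. First, in the reduction to a unique minimal normal subgroup you assert that the subdirect embedding $G\hookrightarrow G/M_1\times G/M_2$ ``yields a contradiction'' after ``analyzing how the degrees of $G$ arise from those of the two factors''; this again hides a nontrivial step (one needs something like $\rho(G)=\rho(G/M_1)\cup\rho(G/M_2)$ together with control of which products of degrees actually occur), and it should be stated and justified rather than gestured at. Second, the final paragraph translating the degree statement into the edge statement is fine and matches the conventions of the introduction. If your goal is simply to use the lemma, the correct move --- and the one the paper makes --- is to cite \cite{P}; if your goal is to prove it, the module-theoretic core must actually be written down.
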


In other words, P\'alfy's condition says that there must not be a $3$-cycle in the complement graph, denoted $\overline{\Delta}(G)$. Much more recently in \cite{A}, the authors prove a generalization of P\'aly's condition, extending from a $3$-cycle to any odd-cycle.

\begin{theorem}[Theorem A from \cite{A}]\label{genp}
Let $G$ be a finite solvable group. Then the graph $\overline{\Delta}(G)$ does not contain any cycle of odd length.
\end{theorem}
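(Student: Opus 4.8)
The plan is to recast the statement in graph-theoretic form and then argue by a minimal counterexample. A graph contains no cycle of odd length precisely when it is bipartite, so the assertion is equivalent to saying that $\overline{\Delta}(G)$ is bipartite; dualizing, this means $\rho(G)$ can be partitioned into two cliques of $\Delta(G)$. Pálfy's condition (Lemma~\ref{PC}) already supplies the base case that $\overline{\Delta}(G)$ has no $3$-cycle, so the entire content is to exclude odd cycles of length $n \ge 5$. I would therefore take $G$ to be a solvable group of minimal order whose complement graph contains an odd cycle, and among all odd cycles select one, $C:\ p_1 p_2 \cdots p_n p_1$, of minimal length $n$. A standard observation shows $C$ must be chordless: any chord splits $C$ into an odd cycle and an even cycle, and the odd one is strictly shorter, contradicting minimality of $n$. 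Consequently the induced subgraph of $\overline{\Delta}(G)$ on $\{p_1,\dots,p_n\}$ is exactly $C$, so translating back to $\Delta(G)$, consecutive primes (indices mod $n$) are \emph{non}-adjacent while every non-consecutive pair \emph{is} adjacent; that is, the induced subgraph of $\Delta(G)$ on these vertices is the complement of the $n$-cycle. By Lemma~\ref{PC} we have $n \ge 5$.

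Next I would use the minimality of $\abs{G}$ to normalize the configuration. For any $1 \neq N \trianglelefteq G$ one has $\cd(G/N) \subseteq \cd(G)$, so $\overline{\Delta}(G/N)$ contains no odd cycle (else $G/N$ would be a smaller counterexample); one leverages this, together with Clifford theory relative to the Fitting subgroup $\mathbf{F}(G)$ (whose centralizer lies inside it), to restrict attention to the primes on the cycle and to reduce to a tightly controlled situation, typically a chief factor acted on coprimely by a complement. The heart of the argument is then character-theoretic. Each non-adjacency $p_i \not\sim p_{i+1}$ carries structural information in the spirit of Pálfy's original lemma, namely a normal-subgroup or Hall-subgroup constraint attached to the pair $(p_i, p_{i+1})$, while each adjacency $p_i \sim p_j$ with $\abs{i-j} \ge 2$ guarantees an irreducible character degree divisible by $p_i p_j$. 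The strategy is to combine these $n$ overlapping local constraints around the cycle and show they are mutually incompatible: one forces the existence of an irreducible character degree divisible by some forbidden product $p_i p_{i+1}$, or equivalently collapses the configuration down to three vertices and invokes Lemma~\ref{PC}.

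I expect the main obstacle to be exactly this final step: extracting, from the prescribed pattern of adjacencies and non-adjacencies, enough control over the normal structure of $G$ and over the orbit sizes of the coprime actions on chief factors to force the contradiction. The combinatorial reduction to a chordless odd cycle is routine and the passage between $\Delta(G)$ and $\overline{\Delta}(G)$ is bookkeeping; the genuine difficulty is that the result is special to solvable groups and cannot follow from any purely graph-theoretic principle, since $C_5$ is triangle-free yet an odd cycle. The contradiction must therefore come from a simultaneous analysis of all the character degrees forced along the entire cycle, rather than from any three vertices considered in isolation.
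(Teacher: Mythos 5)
There is no proof in the paper to compare against: Theorem~\ref{genp} is quoted verbatim as Theorem~A of \cite{A} and is used as an imported black box, so the only question is whether your argument would stand on its own. It does not. Your reductions are correct and standard --- the equivalence of ``no odd cycle'' with bipartiteness of $\overline{\Delta}(G)$ (hence with $\rho(G)$ being covered by two cliques, which is exactly Corollary~\ref{genpcor}), the passage to a minimal counterexample, the observation that a shortest odd cycle is chordless so that the induced subgraph of $\Delta(G)$ on $\{p_1,\dots,p_n\}$ is the complement of an $n$-cycle with $n\ge 5$ odd, and the fact that proper quotients inherit the no-odd-cycle property. But everything after that is a description of what a proof would have to accomplish rather than a proof. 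You write that one ``leverages'' Clifford theory to reduce to a controlled situation and then ``combines these $n$ overlapping local constraints around the cycle and shows they are mutually incompatible,'' and you yourself flag this as the main obstacle. That step is the entire theorem: a non-adjacency $p\not\sim q$ in $\Delta(G)$ for a solvable group encodes a delicate structural restriction (in P\'alfy's original argument already a nontrivial analysis of $\{p,q\}$-Hall subgroups and coprime actions on chief factors), and there is no stated mechanism by which five or more such restrictions arranged in a cycle, together with the prescribed adjacencies, produce a character degree divisible by a forbidden product. Nothing you have written rules out, say, the complement of $C_5$ occurring as an induced subgraph of some $\Delta(G)$; indeed the whole difficulty, as you note, is that $C_5$ is triangle-free, so P\'alfy's condition alone is silent here.

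Concretely, to close the gap you would need the group-theoretic input that \cite{A} actually supplies: a classification-type analysis of the normal structure of a minimal counterexample (minimality of the Fitting subgroup, the action of a complement on it, and the resulting constraints on which primes can divide character degrees), carried out for the specific configuration in which $\Delta(G)$ restricted to $n$ primes is the complement of an odd cycle. As it stands your proposal establishes the combinatorial frame and correctly predicts the shape of the argument, but proves none of the content; it should be regarded as an outline with the central lemma missing.
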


A powerful consequence is stated below, one that we will rely upon heavily in Section \ref{seccon}. First, however, it is important to understand the notion of a \emph{clique}, which is nothing more than a complete subgraph of a given graph.

\begin{corollary}[Corollary B from \cite{A}]\label{genpcor}
Let $G$ be a finite solvable group. Then $\rho(G)$ is covered by two subsets, each inducing a clique in $\Delta(G)$.
\end{corollary}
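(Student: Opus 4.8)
The plan is to derive Corollary \ref{genpcor} directly from Theorem \ref{genp} by translating the statement about $\Delta(G)$ into one about its complement $\overline{\Delta}(G)$. The key observation is that a subset of vertices induces a clique in $\Delta(G)$ precisely when that same subset induces an \emph{independent set} (an edgeless induced subgraph) in $\overline{\Delta}(G)$. Hence the claim that $\rho(G)$ can be covered by two cliques of $\Delta(G)$ is equivalent to the claim that the vertex set of $\overline{\Delta}(G)$ can be partitioned (or covered) by two independent sets, which is exactly the statement that $\overline{\Delta}(G)$ is \emph{bipartite} (or a subgraph of a bipartite graph on the same vertices). So the entire corollary reduces to showing $\overline{\Delta}(G)$ is bipartite.

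First I would recall the classical characterization: a finite graph is bipartite if and only if it contains no cycle of odd length. This is a standard result in graph theory, so I would invoke it rather than reprove it. Applying Theorem \ref{genp}, we know $\overline{\Delta}(G)$ has no odd cycle, and therefore $\overline{\Delta}(G)$ is bipartite. Writing its vertex set as a disjoint union $\rho(G) = V_1 \sqcup V_2$ with each $V_i$ independent in $\overline{\Delta}(G)$, I would then pass back to $\Delta(G)$: since no two vertices within $V_i$ are adjacent in $\overline{\Delta}(G)$, every pair within $V_i$ \emph{is} adjacent in $\Delta(G)$, so each $V_i$ induces a clique in $\Delta(G)$. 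Thus $\rho(G) = V_1 \cup V_2$ is a covering of $\rho(G)$ by two cliques, as desired.

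The main obstacle, such as it is, is conceptual bookkeeping rather than a genuine difficulty: one must be careful that the bipartition of $\overline{\Delta}(G)$ yields independent sets (no internal edges) and not something weaker, and that the two classes of a bipartition translate to \emph{complete} subgraphs on the complement side. A secondary subtlety is the degenerate case where $\overline{\Delta}(G)$ has isolated vertices or $\rho(G)$ is small; these cause no trouble since isolated vertices can be placed in either class and a clique on one or zero vertices is trivially complete, but the statement should be phrased as a covering (allowing the two cliques to overlap or one to be empty) rather than insisting on a strict partition. With the equivalence between complement-independence and cliques made explicit, the corollary follows immediately from the odd-cycle-free conclusion of Theorem \ref{genp}.
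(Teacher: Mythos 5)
Your derivation is correct and is exactly the standard argument by which Corollary B follows from Theorem A in \cite{A}: the absence of odd cycles in $\overline{\Delta}(G)$ makes that complement bipartite, and the two color classes are cliques in $\Delta(G)$. The paper cites this corollary from \cite{A} without reproving it, and your proposal matches the intended derivation.
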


Next we recall the classification of the family of graphs $\{\Gamma_{k,t}\}$ from \cite{BL}. These graphs come up frequently in our classification of graphs with seven vertices, and therefore stating the construction and results seem worthwhile.

For $k\geq t\geq1$, the graph $\Gamma_{k,t}$ has two complete graphs $A$ and $B$ on $k$ and $t$ vertices, respectively. Letting $\rho(A) = \{a_1, a_2, \ldots, a_k\}$ and $\rho(B) = \{b_1, b_2, \ldots, b_t\}$ (with the restriction that $\rho(A)\cap\rho(B)=\varnothing$) we have that $a_i$ is adjacent to $b_i$ for all $1 \leq i \leq t$, while the remaining $k-t$ vertices in $A$ are adjacent to no vertices in $B$. These graphs were fully classified, the statement of the result of which is below.

\begin{theorem}\label{KT}\emph{(\cite{BL})}
The graph $\Gamma_{k,t}$ occurs as the prime character degree graph of a solvable group precisely when $t=1$ or $k=t=2$. Otherwise $\Gamma_{k,t}$ does not occur as the prime character degree graph of any solvable group.
\end{theorem}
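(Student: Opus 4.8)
The statement is a biconditional, so I would prove the two implications separately: a construction showing $\Gamma_{k,1}$ and $\Gamma_{2,2}$ do occur, and a non-occurrence argument for all remaining $\Gamma_{k,t}$ (those with $t\ge 2$ and $k\ge 3$). The first observation to record is that the generalized P\'alfy condition (Theorem \ref{genp}) gives no leverage here: writing $\rho(A)$ and $\rho(B)$ for the two cliques, every edge of $\overline{\Gamma_{k,t}}$ runs between $\rho(A)$ and $\rho(B)$, so the complement is bipartite and contains no odd cycle for any $k,t$. Thus the non-occurrence half cannot be read off from P\'alfy-type obstructions and must come from finer structural information about solvable groups.

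For the construction I would realize both admissible cases as direct products, exploiting the fact that for groups $G_1,G_2$ with $\rho(G_1)\cap\rho(G_2)=\varnothing$ the graph $\Delta(G_1\times G_2)$ is the join of $\Delta(G_1)$ and $\Delta(G_2)$. For $k=t=2$, note $\Gamma_{2,2}$ is the four-cycle $C_4=K_{2,2}$, which is the join of two edgeless two-vertex graphs; realizing each factor as a solvable group whose graph is a pair of isolated vertices (a classified disconnected two-vertex graph) and taking the product yields $C_4$. For $t=1$, I would observe that $\Gamma_{k,1}$ is the join of the single vertex $a_1$ with the disconnected graph $K_{k-1}\sqcup K_1$ (the clique on $a_2,\dots,a_k$ together with the isolated $b_1$); realizing the disconnected factor via the classification of disconnected graphs and multiplying by a nonabelian $p$-group for the vertex $a_1$ produces $\Gamma_{k,1}$.

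The non-occurrence half is the substance of the theorem. I would argue by contradiction from a minimal realization $\Delta(G)=\Gamma_{k,t}$ with $t\ge 2$ and $k\ge 3$, and reduce to the minimal forbidden graph $\Gamma_{3,2}$: deleting a suitable vertex of $A$ or of $B$ produces a smaller member $\Gamma_{k-1,t}$ or $\Gamma_{k,t-1}$ still lying in the forbidden range, so an induction in which such a vertex can be eliminated by passing to an appropriate quotient or normal subgroup (the role of the admissible-vertex results) brings every case down to $\Gamma_{3,2}$. The heart of the matter is then to show directly that no solvable group $G$ has $\Delta(G)=\Gamma_{3,2}$. Here I would take a minimal normal subgroup $N$, use Clifford theory to control how the primes of $\rho(A)=\{a_1,a_2,a_3\}$ and $\rho(B)=\{b_1,b_2\}$ distribute among the degrees of the characters lying over $\Irr(N)$, and exploit the rigidity of the configuration: $a_3$ is joined to both $a_1,a_2$ but to neither $b_1$ nor $b_2$, while $a_1b_1,\,a_2b_2$ form a matching inside the $C_4$ on $\{a_1,a_2,b_1,b_2\}$. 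Translating these adjacency and non-adjacency constraints into statements about which products of degrees can and cannot occur is expected to force incompatible requirements on the action of $G$ on $N$.

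The main obstacle is exactly this base case $\Gamma_{3,2}$: its own vertex deletions ($a_3$ gives $\Gamma_{2,2}$ and $b_2$ gives $\Gamma_{3,1}$) are both realizable, so it cannot be dispatched by reduction and instead demands a self-contained group-theoretic contradiction. Making the induction step rigorous is the secondary difficulty, since one must verify that the chosen vertices of $A$ and $B$ are genuinely removable at the level of groups rather than merely of graphs.
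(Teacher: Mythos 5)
First, a point of orientation: the paper does not prove Theorem \ref{KT} at all. It is imported verbatim from \cite{BL} and used as a black box throughout, so there is no in-paper argument to compare yours against line by line. Judged on its own terms, your constructive half is essentially sound: $\Gamma_{2,2}$ is the $4$-cycle, realized as the join of two two-vertex edgeless graphs and hence as $\Delta$ of a direct product of two solvable groups each having a disconnected two-vertex degree graph, and $\Gamma_{k,1}$ is the join of the vertex $a_1$ with the occurring disconnected graph $K_{k-1}\sqcup K_1$, hence a direct product of a nonabelian $p$-group with a group realizing that disconnected graph. Your observation that $\overline{\Gamma_{k,t}}$ is bipartite between $\rho(A)$ and $\rho(B)$, so that Theorem \ref{genp} gives no obstruction for any $k,t$, is also correct and worth recording.

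The gap is in the non-occurrence half, and it is the whole theorem: the base case $\Gamma_{3,2}$, which you correctly identify as carrying all the weight, is never actually proved. You write that translating the adjacency constraints into degree conditions is ``expected to force incompatible requirements,'' but no contradiction is exhibited, and this configuration is genuinely resistant to the standard machinery: deleting $a_3$ gives $C_4=\Gamma_{2,2}$ and deleting $b_1$ or $b_2$ gives $\Gamma_{3,1}$, both occurring, while deleting the edge $\epsilon(a_1,b_1)$ gives the disconnected graph $K_3\sqcup K_2$, which satisfies P\'alfy's inequality and occurs; so no vertex of $\Gamma_{3,2}$ is admissible and the entire apparatus of Lemmas \ref{strong}--\ref{thefinal} is unavailable. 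A self-contained Clifford-theoretic contradiction here is precisely the hard content of \cite{BL} and the five-vertex analysis it rests on, and a proof that defers it has not proved the statement. The induction step is also looser than you suggest: deleting a matched vertex $a_i$ with $i\le t$ from $\Gamma_{k,t}$ yields a $(k-1)$-clique and a $t$-clique joined by a matching of size $t-1$ that no longer saturates the smaller clique, a graph outside the family $\{\Gamma_{k,t}\}$, and the admissibility conditions likewise demand the non-occurrence of many edge-deleted subgraphs that are not of the form $\Gamma_{k',t'}$. So the inductive hypothesis as you state it does not cover the subgraphs you need, and closing the reduction requires classifying those auxiliary graphs separately.
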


%%%%%%%%%%%%%%%%%%%%%%%%%%%%%%%%%%%%%%%%%
\subsection{Disconnected graphs}
%%%%%%%%%%%%%%%%%%%%%%%%%%%%%%%%%%%%%%%%%

Disconnected graphs often arise. Beyond using the classification done in \cite{L} and one of the main results therein, we will also frequently use a result from P\'alfy. For completion's sake, we state both frequently used results here.

\begin{theorem}[Theorem 5.5 from \cite{L}]
Let $G$ be a solvable group and suppose that $\Delta(G)$ has two connected components. Then there is precisely one prime $p$ so that the Sylow $p$-subgroup of the Fitting subgroup of $G$ is not central in $G$.
\end{theorem}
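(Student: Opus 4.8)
The plan is to combine standard facts about the Fitting subgroup of a solvable group with the strong graph-theoretic restrictions that a disconnected degree graph imposes. Throughout write $F = F(G)$ and $Z = Z(G)$, and recall that since $F$ is nilpotent it is the internal direct product of its Sylow subgroups, the Sylow $p$-subgroup of $F$ being exactly $O_p(G)$. Thus the statement asks us to show $O_p(G) \not\le Z$ for precisely one prime $p$. The existence of at least one such prime is the easy direction. As $G$ is solvable, $C_G(F) \le F$, whence $C_G(F) = Z(F)$. If every $O_p(G)$ were central, then $F = \prod_p O_p(G) \le Z$, forcing $G \le C_G(F) \le F$, so $G = F$ is nilpotent with $G \le Z$, i.e. $G$ is abelian. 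But an abelian group has $\rho(G) = \varnothing$, so $\Delta(G)$ could not have two components. Hence some $O_p(G)$ is non-central.

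All the content therefore lies in the uniqueness. I would first pin down the shape of the graph. Write $\pi_1, \pi_2$ for the vertex sets of the two components, so that no character degree is divisible by a prime from each of $\pi_1$ and $\pi_2$. If some pair $p, p' \in \pi_1$ were non-adjacent, then for any $q \in \pi_2$ the three primes $\{p, p', q\}$ would span no edge, contradicting Lemma~\ref{PC}; the same holds in $\pi_2$. Hence each component is a clique and $\Delta(G)$ is a disjoint union of two complete graphs (this is the two-clique covering of Corollary~\ref{genpcor} realized by the components themselves, with Theorem~\ref{genp} ruling out any further non-edge). With the graph reduced to this rigid form, I would then invoke the structural classification of solvable groups with two-component degree graphs established earlier in \cite{L}, which restricts $G$ to a short list of families; the goal becomes to read off from that structure that the non-central part of $F$ is supported on a single prime.

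The main obstacle is exactly this last point. Suppose, for contradiction, that $O_p(G)$ and $O_q(G)$ are both non-central for distinct primes $p \ne q$. Since $C_G(F) = Z(F)$, the quotient $G/Z(F)$ acts faithfully on $F$, and because $F$ splits as the direct product of its Sylow factors, $G$ acts nontrivially on both $O_p(G)$ and $O_q(G)$. The natural attack is Clifford theory: a non-$G$-invariant linear character of $O_p(G)$ (and independently one of $O_q(G)$) lies under irreducible characters of $G$ whose degrees record the associated orbit sizes and extension indices. The crux is to show that sustaining two such independent non-central actions is incompatible with $\Delta(G)$ being a disjoint union of two cliques, i.e. that the degrees so produced must eventually join $\pi_1$ to $\pi_2$. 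This demands delicate control of the action of $G/F$ on $F$, exploiting the coprimeness of $O_p(G)$ and $O_q(G)$, and it is precisely this analysis that the two-component classification of \cite{L} supplies. Once the non-central part of $F$ is confined to a single prime, that prime is the unique $p$ sought, and the theorem follows.
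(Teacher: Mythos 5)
This statement is quoted in the paper as Theorem 5.5 of \cite{L}; the paper supplies no proof of its own, so the only question is whether your proposal would stand on its own as a proof. It would not: there is a genuine gap at exactly the point you yourself identify as carrying ``all the content.''

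Your easy direction is fine. Since $G$ is solvable, $C_G(F)\le F$, so if every Sylow subgroup of $F$ were central then $F\le Z(G)$ would force $G=C_G(F)\le F\le Z(G)$, making $G$ abelian and $\rho(G)$ empty; hence at least one prime $p$ has $O_p(G)$ noncentral. Likewise the observation that each connected component must be a clique follows immediately from Lemma \ref{PC} by picking two nonadjacent vertices in one component and any vertex in the other. But for the uniqueness — the assertion that at most one $O_p(G)$ is noncentral — you do not give an argument. You set up the contradiction hypothesis, name Clifford theory as ``the natural attack,'' state what would need to be shown, and then write that ``it is precisely this analysis that the two-component classification of \cite{L} supplies.'' That is circular: Theorem 5.5 is itself part of that classification, and the classification is precisely the nontrivial content you are being asked to establish. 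Appealing to it is not a proof but a restatement of the citation. Nothing in your sketch explains why two independent noncentral Sylow factors of $F$ would force an irreducible character degree divisible by a prime from each component; producing such a character from two separate non-invariant linear characters of coprime normal subgroups is exactly the delicate step, and it genuinely requires the structural analysis of groups with disconnected degree graphs (the six families descending from P\'alfy's and Lewis's work), not just the two-clique shape of the graph. As written, the proposal proves the existence half and defers the uniqueness half to the source of the theorem.
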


\begin{theorem}[P\'alfy's inequality from \cite{P2}]\label{PI}
Let $G$ be a solvable group and $\Delta(G)$ its prime character degree graph. Suppose that $\Delta(G)$ is disconnected with two components having size $a$ and $b$, where $a\leq b$. Then $b\geq2^a-1$.
\end{theorem}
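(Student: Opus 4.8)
The plan is to follow Pálfy's original strategy from \cite{P2}, exploiting the rigid structure that disconnectedness forces on $G$. Write $\pi_1$ and $\pi_2$ for the two connected components of $\Delta(G)$, with $|\pi_1|=a\leq b=|\pi_2|$ (recall that for solvable groups there are never more than two components, so the hypothesis is exactly that $\Delta(G)$ is disconnected). The first step is a reduction to a normal form. By Theorem 5.5 from \cite{L}, there is precisely one prime $p$ for which the Sylow $p$-subgroup of the Fitting subgroup $F(G)$ fails to be central, and this distinguished prime anchors the analysis. After factoring out the central Sylow subgroups of $F(G)$, which leave $\Delta(G)$ unchanged, I would arrange that $G$ contains a normal elementary abelian section $V$ acted on by a complement $H$, so that, up to operations preserving the degree graph, $G$ is a Frobenius or two-step Frobenius configuration. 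The outcome of this reduction is that one component records the degree-primes of the acting group $H$, while the other records the primes arising from the orbits of $H$ on $\Irr(V)$.

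The heart of the argument is the fixed-point-free nature of the action. In the normal form, $H$ acts fixed-point-freely on $V$, so each nonprincipal $\lambda\in\Irr(V)$ has inertia subgroup of index divisible only by primes of one component, and the characters induced from these $\lambda$ furnish the degrees recorded there. Being a Frobenius complement, $H$ has cyclic Sylow subgroups at every odd prime and a cyclic or generalized quaternion Sylow $2$-subgroup; in particular the part of $H$ governing the smaller component $\pi_1$ is controlled by an abelian section $A\cong C_{p_1}\times\cdots\times C_{p_a}$, one cyclic factor per prime of $\pi_1$. I would then run through the $2^a-1$ nontrivial subgroups $B\leq A$, equivalently the nonempty subsets of $\pi_1$, and track for each the characters of $V$ that are $B$-invariant but not $A$-invariant. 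The aim is to show that distinct $B$ produce character degrees whose $\pi_2$-parts are pairwise distinct, so that each of the $2^a-1$ subgroups contributes at least one new prime to $\pi_2$; this yields $b=|\pi_2|\geq 2^a-1$.

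The principal obstacle is twofold. First, the reduction to the Frobenius normal form must be performed so that neither $a$ nor $b$ decreases, and one must verify that the quotient and restriction steps keep both components intact and correctly identify $\pi_1$ as the component indexing the abelian acting section $A$; this is precisely where the assumption of \emph{two} components, rather than one, does the work. Second, and more delicate, is the combinatorial core: establishing that the $2^a-1$ subgroups of $A$ force $2^a-1$ \emph{pairwise distinct} primes in $\pi_2$, rather than merely $2^a-1$ distinct degrees that could share primes. I expect to handle this by induction on $a$, peeling off one prime of $\pi_1$ at a time and using the fixed-point-free hypothesis to guarantee that the orbit lengths introduced at each stage are coprime to those already counted; the base case $a=1$ reduces to the trivial observation that $b\geq 1=2^1-1$.
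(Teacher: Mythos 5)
The paper does not actually prove this statement: it is quoted verbatim from P\'alfy's paper \cite{P2} as background, so there is no internal proof to compare against. Judged on its own terms, your outline does capture the correct high-level shape of P\'alfy's argument --- the bound $2^a-1$ really does arise by attaching a prime of the larger component to each of the $2^a-1$ nonempty subsets of $\pi_1$ (equivalently, nontrivial subgroups of an abelian section whose order involves exactly the primes of $\pi_1$). But as written the proposal has two genuine gaps, and the second is fatal to the plan.

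First, the ``reduction to a Frobenius or two-step Frobenius configuration'' is not a routine normalization; it is the main structural content of P\'alfy's Part I and of Lewis's classification of solvable groups with disconnected degree graphs, and it is exactly where one must justify that the smaller component is the one governed by an abelian (indeed essentially cyclic) acting section $A$ with $\pi(|A|)=\pi_1$, that $A$ acts fixed-point-freely on $V$, and that neither component is damaged by passing to the relevant section. You assert all of this in one sentence. Second, and more seriously, the combinatorial core --- that the $2^a-1$ nontrivial subgroups $B\leq A$ force $2^a-1$ \emph{pairwise distinct primes} in $\pi_2$, not merely $2^a-1$ distinct degrees or orbit lengths --- is not delivered by ``orbit lengths introduced at each stage are coprime to those already counted.'' Fixed-point-freeness gives you, for each $B$, characters of $V$ whose $A$-stabilizer is exactly $B$, hence orbit lengths $|A:B|$, but nothing in that bookkeeping produces a \emph{new prime divisor} for each $B$. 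The standard engine here is arithmetic, not group-theoretic: with $|V|=r^n$, the fixed points of the subgroup corresponding to a subset $S\subseteq\pi_1$ form a subspace of size $r^{n_S}$, and one invokes Zsygmondy's primitive prime divisor theorem to find, for each $S$, a prime dividing $r^{n_S}-1$ that divides no $r^{m}-1$ with $m<n_S$; these primitive primes are pairwise distinct, lie in $\pi_2$, and the Zsygmondy exceptions ($r^{n_S}=2^6$ and $r+1$ a power of $2$) must be handled separately. Your sketch never invokes anything of this kind, so the inductive step you describe cannot be carried out as stated.
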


%%%%%%%%%%%%%%%%%%%%%%%%%%%%%%%%%%%%%%%%%
\subsection{Diameter three graphs}\label{secdiam3}
%%%%%%%%%%%%%%%%%%%%%%%%%%%%%%%%%%%%%%%%%

It is not uncommon that we will encounter a graph with seven vertices that has diameter three. These diameter three graphs were substantially tamed in \cite{S}, the groups investigated in \cite{C}, and an example ultimately constructed in \cite{L2}, and then done in more general in \cite{D}.

In particular to the specific result we will use, we recall that having a diameter of three in $\Delta(G)$ imposes a partition of $\rho(\Gamma)$ into four nonempty disjoint sets: $\rho(\Gamma)=\rho_1\cup\rho_2\cup\rho_3\cup\rho_4$. One can do this in the following way: find vertices $p$ and $q$ where the distance between them is three. Let $\rho_4$ be the set of all vertices that are distance three from the vertex $p$, which will include the vertex $q$. Let $\rho_3$ be the set of all vertices that are distance two from the vertex $p$. Let $\rho_2$ be the set of all vertices that are adjacent to the vertex $p$ and some vertex in $\rho_3$. Finally, let $\rho_1$ consist of $p$ and all vertices adjacent to $p$ that are not adjacent to anything in $\rho_3$. This notation is not unique, and depends on the vertices $p$ and $q$. Using the above notation, one can always arrange the four disjoint sets to have the following result.

\begin{proposition}\label{sassresult}\emph{(\cite{S})}
Let $G$ be a solvable group where $\Delta(G)$ has diameter three. One then has the following:
\begin{enumerate}[(i)]
    \item\label{sass1} $|\rho_3|\geq3$,
    \item\label{sass2} $|\rho_1\cup\rho_2|\leq|\rho_3\cup\rho_4|$,
    \item\label{sass3} if $|\rho_1\cup\rho_2|=n$, then $|\rho_3\cup\rho_4|\geq2^n$, and
    \item\label{sass4} $G$ has a normal Sylow $p$-subgroup for exactly one prime $p\in\rho_3$.
\end{enumerate}
\end{proposition}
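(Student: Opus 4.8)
The plan is to reconstruct this result as a consequence of the deep structural analysis of diameter-three character degree graphs developed by Sass in \cite{S}, so the strategy is to recall the group-theoretic apparatus underlying the four-set partition and then extract each of the four numerical conclusions. Let me start by fixing the partition $\rho(\Delta(G)) = \rho_1 \cup \rho_2 \cup \rho_3 \cup \rho_4$ as described in the preamble, anchored at vertices $p, q$ with $d(p,q)=3$. The key engine, which I would import from \cite{S}, is that the diameter-three hypothesis forces a very rigid normal structure on $G$: one obtains a normal subgroup (built from the Fitting subgroup and its relationship to the primes at distance three from $p$) that isolates exactly one prime in $\rho_3$ as having a normal Sylow subgroup. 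This immediately gives conclusion (\ref{sass4}), which I would treat as the cornerstone, since the remaining inequalities all flow from the module-theoretic consequences of having that normal Sylow $p$-subgroup acting on the rest of the group.

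Granting (\ref{sass4}), I would prove (\ref{sass1}) and (\ref{sass3}) together, since they share the same mechanism. The idea is that the normal Sylow $p$-subgroup for $p \in \rho_3$ yields a faithful action of the quotient on an elementary abelian section, and the primes in $\rho_1 \cup \rho_2$ index the acting part while $\rho_3 \cup \rho_4$ indexes the "far side" that must accommodate all the resulting character degrees. A standard counting argument on the orbit sizes of a coprime action --- of the flavor used to prove P\'alfy's inequality (Theorem \ref{PI}) --- should then yield the exponential bound $|\rho_3 \cup \rho_4| \geq 2^{|\rho_1 \cup \rho_2|}$ in (\ref{sass3}). Since $\rho_1$ and $\rho_2$ are each nonempty (the partition is into four nonempty sets), we have $|\rho_1 \cup \rho_2| \geq 2$, and so (\ref{sass3}) forces $|\rho_3 \cup \rho_4| \geq 4$; combined with the fact that $q \in \rho_4$ accounts for only the distance-three vertices, a short argument bounding $|\rho_4|$ should push the remainder into $\rho_3$ to give $|\rho_3| \geq 3$ in (\ref{sass1}). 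Finally, (\ref{sass2}) is the weakest of the four and follows immediately from (\ref{sass3}): since $2^n \geq n$ for all $n \geq 1$, we get $|\rho_3 \cup \rho_4| \geq 2^{|\rho_1 \cup \rho_2|} \geq |\rho_1 \cup \rho_2|$.

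The main obstacle, I expect, is establishing the exponential bound in (\ref{sass3}) rigorously, because this is where the genuine representation theory lives rather than bookkeeping. The difficulty is that one must translate the adjacency structure of $\Delta(G)$ --- specifically, which primes fail to divide a common character degree --- into a precise statement about the orbit structure of a coprime linear action, and then count how many distinct prime divisors the orbit stabilizers can simultaneously realize on the far side of the graph. Controlling that one must verify the action is faithful and that the relevant section is large enough to force $2^n$ distinct "coordinates," which is exactly the subtle point in \cite{S}. Because a fully self-contained proof would essentially reproduce the machinery of that paper, the honest approach here is to cite \cite{S} for the structural input and present the four conclusions as the clean numerical extraction, foregrounding (\ref{sass4}) as the source and deriving (\ref{sass1}), (\ref{sass2}), (\ref{sass3}) as its combinatorial shadows.
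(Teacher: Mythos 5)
The paper does not actually prove this proposition: it is imported verbatim from \cite{S}, so there is no in-paper argument to compare yours against, and your closing decision to cite \cite{S} for the structural machinery is exactly what the authors do. Within your proposed skeleton, the deduction of (\ref{sass2}) from (\ref{sass3}) via $2^n\geq n$ is correct, and treating (\ref{sass4}) as an imported black box is defensible.

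The genuine gap is your route to (\ref{sass1}). From $|\rho_1\cup\rho_2|\geq 2$ and (\ref{sass3}) you get $|\rho_3\cup\rho_4|\geq 4$, but that inequality says nothing about how those vertices split between $\rho_3$ and $\rho_4$, and the ``short argument bounding $|\rho_4|$'' you hope for cannot exist in the form you need: $\rho_4$ is not a singleton in general. Indeed, for the graph $B_3$ constructed in Section \ref{secBB3} of this very paper, taking $p=5$ and $q=t$ gives $\rho_1=\{5\}$, $\rho_2=\{3\}$, $\rho_3=\{v,u,23\}$, and $\rho_4=\{t,s\}$, so $|\rho_4|=2$, and the bound $|\rho_3\cup\rho_4|\geq 4$ alone would still permit $|\rho_3|=2$. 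In \cite{S} the bound $|\rho_3|\geq 3$ is not a corollary of (\ref{sass3}); it is obtained directly, roughly by exhibiting a subquotient whose degree graph is disconnected with its smaller component inside $\rho_1\cup\rho_2$ and its larger component inside $\rho_3$, so that P\'alfy's inequality (Theorem \ref{PI}) gives $|\rho_3|\geq 2^{|\rho_1\cup\rho_2|}-1\geq 2^2-1=3$; adding $|\rho_4|\geq 1$ then recovers (\ref{sass3}) as well. So the logical flow is essentially the reverse of what you propose: the exponential bound lands on $\rho_3$ first, and (\ref{sass1})--(\ref{sass3}) all fall out of that, rather than (\ref{sass1}) being squeezed out of a bound on the union.
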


%%%%%%%%%%%%%%%%%%%%%%%%%%%%%%%%%%%%%%%%%
\subsection{Admissible vertices}
%%%%%%%%%%%%%%%%%%%%%%%%%%%%%%%%%%%%%%%%%

In \cite{BL2}, Bissler and Lewis construct a family of graphs that do not occur as the prime character degree graph of any solvable group, where in their arguments they make use of labeling certain vertices as admissible. This has far-reaching consequences and is an incredible aid with the journey to proving that a certain graph does not occur as the prime character degree graph of any solvable group.
 
\begin{definition}(\cite{BL2})
Let $\Gamma$ be a graph and $p$ a vertex of $\Gamma$. Consider the following three conditions:
\begin{enumerate}[(i)]
\item the subgraph of $\Gamma$ obtained by removing $p$ and all edges incident to $p$ is non-occurring,
\item all of the subgraphs of $\Gamma$ obtained by removing one or more of the edges incident to $p$ are non-occurring,
\item all of the subgraphs of $\Gamma$ obtained by removing $p$, the edges incident to $p$, and one or more of the edges between two adjacent vertices of $p$ are non-occurring.
\end{enumerate}
If $p$ satisfies conditions (i) and (ii), then $p$ is said to be an \textbf{admissible} vertex. If $p$ satisfies all three conditions, then $p$ is said to be a \textbf{strongly admissible} vertex of $\Gamma$.
\end{definition}

Several immediate consequences follow, where in the latter two, it is to conclude that no normal nonabelian Sylow subgroup exists corresponding to a particular vertex. This argument is used heavily in Section \ref{sec3and4}.

\begin{lemma}\emph{(\cite{BL2})}
Let $G$ be a solvable group, and suppose $p$ is an admissible vertex of $\Delta(G)$. For every proper normal subgroup $N$ of $G$, suppose that $\Delta(N)$ is a proper subgraph of $\Delta(G)$. Then $O^p(G)=G$.
\end{lemma}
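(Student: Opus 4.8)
The plan is to argue by contradiction. Suppose $O^p(G) \neq G$ and set $N = O^p(G)$, a proper normal subgroup of $G$ for which $G/N$ is a nontrivial $p$-group. By the standing hypothesis, $\Delta(N)$ is then a proper subgraph of $\Delta(G)$, and the goal is to show that the \emph{shape} of this containment is forced to be exactly one of the two configurations forbidden by admissibility.

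The technical heart is a Clifford-theoretic comparison of $\cd(G)$ and $\cd(N)$ that uses only that $G/N$ is a $p$-group. First I would record that for any $\chi \in \Irr(G)$ and any irreducible constituent $\theta$ of the restriction $\chi_N$, the ratio $\chi(1)/\theta(1)$ is a power of $p$: writing $T$ for the inertia group of $\theta$ in $G$ and taking the Clifford correspondent $\psi \in \Irr(T \mid \theta)$, one has $\chi(1) = [G:T]\,\psi(1)$ and $\psi(1)/\theta(1) \mid [T:N]$, while both $[G:T]$ and $[T:N]$ divide the $p$-power $[G:N]$. Conversely, every $\theta \in \Irr(N)$ lies under some $\chi \in \Irr(G)$, so the same statement runs in reverse. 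Consequently the $p'$-parts of the two degree sets coincide, that is, $\{\chi(1)_{p'} : \chi \in \Irr(G)\} = \{\theta(1)_{p'} : \theta \in \Irr(N)\}$.

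From this equality I would extract the structural comparison of the two graphs. For primes $q,r \neq p$, the product $qr$ divides some degree of $G$ exactly when it divides the corresponding $p'$-part, and exactly when it divides some degree of $N$; hence $\Delta(G)$ and $\Delta(N)$ have the same vertices other than $p$ and precisely the same edges among those vertices. The only possible discrepancies are therefore whether $p$ is a vertex of $\Delta(N)$ and which edges at $p$ survive. This splits into two cases. If $p \notin \rho(N)$, then $\Delta(N)$ is precisely $\Delta(G)$ with the vertex $p$ and all edges incident to $p$ deleted; since $\Delta(N)$ genuinely occurs (being the graph of the solvable group $N$), this contradicts admissibility condition (i). If $p \in \rho(N)$, then $\Delta(N)$ has the same vertex set as $\Delta(G)$ and is obtained by deleting some edges incident to $p$; because the hypothesis makes $\Delta(N)$ a \emph{proper} subgraph, at least one such edge must be deleted, so $\Delta(N)$ is an occurring graph of exactly the type forbidden by admissibility condition (ii). Either way we reach a contradiction, forcing $O^p(G) = G$.

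I expect the main obstacle to be stating and justifying the $p'$-part invariance cleanly, in particular the converse direction (that no $p'$-vertex and no $p'$-edge is lost in passing from $G$ down to $N$) and the bookkeeping that confines every difference between $\Delta(G)$ and $\Delta(N)$ to the single vertex $p$ and its incident edges. Once that invariance is in hand, the case split aligns perfectly with conditions (i) and (ii) of the definition, and the role of the standing hypothesis is simply to guarantee that \emph{some} difference exists, so that one of the two forbidden configurations is actually realized by the occurring graph $\Delta(N)$.
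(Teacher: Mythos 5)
The paper does not actually prove this lemma---it is imported verbatim from \cite{BL2}---but your argument is correct and is essentially the proof given there: taking $N=O^p(G)$ proper makes $G/N$ a nontrivial $p$-group, Clifford theory (Isaacs, Corollary 11.29) forces $\chi(1)/\theta(1)$ to be a power of $p$ whenever $\theta\in\Irr(N)$ lies under $\chi\in\Irr(G)$, so the $p'$-parts of the two degree sets agree and $\Delta(N)$ can differ from $\Delta(G)$ only at the vertex $p$ and its incident edges. Since $\Delta(N)$ occurs (as $N$ is solvable) and the hypothesis guarantees the containment is proper, the resulting graph is exactly one of the configurations forbidden by conditions (i) and (ii) of admissibility, and I see no gaps in your write-up.
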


\begin{lemma}\label{strong}\emph{(\cite{BL2})}
Let $G$ be a solvable group and assume that $p$ is a prime whose vertex is a strongly admissible vertex of $\Delta(G)$. For every proper normal subgroup $N$ of $G$, suppose that $\Delta(G/N)$ is a proper subgraph of $\Delta(G)$. Then a Sylow $p$-subgroup of $G$ is not normal.
\end{lemma}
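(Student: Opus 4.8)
The plan is to argue by contradiction: assume a Sylow $p$-subgroup $P$ of $G$ is normal, and then produce a proper quotient of $G$ whose character degree graph is one of the non-occurring graphs guaranteed by the strong admissibility of $p$. First I would dispose of the abelian case. If $P$ were abelian, then since $P \trianglelefteq G$ is a Sylow $p$-subgroup, It\^o's theorem gives $\chi(1) \mid [G:P]$ for every $\chi \in \Irr(G)$, so $p \nmid \chi(1)$ for all such $\chi$ and hence $p \notin \rho(G)$. This contradicts the fact that $p$ is a vertex of $\Delta(G)$. Thus $P$ is nonabelian, and $P' = [P,P]$ is a nontrivial normal subgroup of $G$: it is characteristic in the normal subgroup $P$, and $P' \subsetneq P \subseteq G$ because $p$-groups are nilpotent, so $P'$ is a proper normal subgroup of $G$.

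The central step is a Clifford-theoretic observation about $P'$. I would show that every $\chi \in \Irr(G)$ with $p \nmid \chi(1)$ has $P' \subseteq \ker\chi$. Indeed, restricting $\chi$ to the normal subgroup $P$, Clifford's theorem writes $\chi_P$ as a multiple of a sum of $G$-conjugate irreducible constituents $\theta$, with $\theta(1) \mid \chi(1)$; since $\theta(1)$ is a power of $p$ and $p \nmid \chi(1)$, each constituent is linear. A linear character of $P$ is trivial on $P'$, and $P'$ is $G$-invariant, so all conjugate constituents are trivial on $P'$ as well; hence $\chi$ is trivial on $P'$. Applying this together with It\^o's theorem to the quotient $\overline{G} = G/P'$, whose image of $P$ is the abelian normal Sylow $p$-subgroup $P/P'$, I obtain that $p \notin \rho(\overline{G})$ and, in fact, that $\Irr(\overline{G})$ is exactly the set of $\chi \in \Irr(G)$ with $p \nmid \chi(1)$.

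I would then read off the shape of $\Delta(\overline{G})$ relative to $\Delta(G)$. The vertex $p$ is gone, hence so is every edge incident to $p$. For an edge $qr$ of $\Delta(G)$ with $q, r \neq p$: if some witnessing character $\chi$ with $qr \mid \chi(1)$ can be chosen with $p \nmid \chi(1)$, then $\chi \in \Irr(\overline{G})$ and the edge survives; otherwise every such $\chi$ has $p \mid \chi(1)$, forcing $pq \mid \chi(1)$ and $pr \mid \chi(1)$, so that $q$ and $r$ are both neighbors of $p$. Similarly, a neighbor $q$ of $p$ that fails to remain a vertex of $\Delta(\overline{G})$ can only have had neighbors among the neighbors of $p$. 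Consequently $\Delta(\overline{G})$ is obtained from $\Delta(G)$ by deleting $p$ together with its incident edges and, at most, deleting some edges joining two neighbors of $p$ (isolating any lost neighbor). This is precisely a graph of the type appearing in condition (i) or condition (iii) of strong admissibility, so by hypothesis it is non-occurring; yet it occurs, being $\Delta(\overline{G})$ for the solvable group $\overline{G}$. The minimality hypothesis, that $\Delta(G/N)$ is a proper subgraph of $\Delta(G)$ for every proper normal $N$, is what guarantees we are genuinely comparing against a proper subgraph, completing the contradiction.

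The step I expect to be the main obstacle is the final graph-matching, specifically the bookkeeping around neighbors of $p$ that disappear as vertices of $\Delta(\overline{G})$: one must check that such a vertex becomes isolated within the relevant condition-(iii) subgraph, so that its disappearance does not create a configuration outside the scope of conditions (i) and (iii), and that passing from such a graph to its non-isolated part cannot turn a non-occurring graph into an occurring one. The Clifford argument is routine once set up, but pinning down the exact correspondence between $\Delta(\overline{G})$ and the enumerated non-occurring subgraphs, and invoking the minimality hypothesis correctly, is where the care is needed.
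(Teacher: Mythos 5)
Your overall strategy is the standard one and, as far as can be told, the intended one: the paper itself gives no proof of Lemma \ref{strong}, importing it verbatim from \cite{BL2}, so there is no in-paper argument to compare against line by line. Your reductions are correct. It\^o's theorem rules out an abelian normal Sylow $p$-subgroup because $p\in\rho(G)$; the Clifford argument showing $P'\subseteq\ker\chi$ whenever $p\nmid\chi(1)$, combined with It\^o applied to $G/P'$, correctly identifies $\cd(G/P')$ with $\{a\in\cd(G)\;:\;p\nmid a\}$; and your analysis of what can be lost in passing from $\Delta(G)$ to $\Delta(G/P')$ (the vertex $p$, its incident edges, some edges joining two neighbors of $p$, and possibly some neighbors of $p$ all of whose witnessing degrees are divisible by $p$) is accurate.

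The gap is exactly the one you flag in your final paragraph, and it is genuine rather than mere bookkeeping. Conditions (i) and (iii) of strong admissibility, as stated, only declare non-occurring certain graphs whose vertex set is $\rho(G)\setminus\{p\}$. If some neighbor $q$ of $p$ has every degree divisible by $q$ also divisible by $p$, then $\Delta(G/P')$ equals the relevant condition-(iii) graph $\Gamma_0$ with its isolated vertices deleted; this is a different graph and is not covered by the hypotheses. Moreover, the implication you would need to finish --- that deleting isolated vertices from a non-occurring graph yields a non-occurring graph --- is false in general: the disjoint union of a path on three vertices with an isolated vertex is non-occurring (three pairwise non-adjacent vertices violate P\'alfy's condition), yet the path on three vertices is $\Gamma_{2,1}$, which occurs by Theorem \ref{KT}. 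So the final contradiction cannot be extracted by graph theory alone; one must either show group-theoretically that no neighbor of $p$ can vanish from $\rho(G/P')$, or enlarge the list of forbidden subgraphs to include the vertex-deleted ones. That is the work the cited source must supply, and your write-up identifies the obstacle without closing it.
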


\begin{lemma}\label{pi}\emph{(\cite{BL2})}
Let $\Gamma$ be a graph satisfying P\'alfy's condition. Let $q$ be a vertex of $\Gamma$, and denote $\pi$ to be the set of vertices of $\Gamma$ adjacent to $q$, and $\rho$ to be the set of vertices of $\Gamma$ not adjacent to $q$. Assume that $\pi$ is the disjoint union of nonempty sets $\pi_1$ and $\pi_2$, and assume that no vertex in $\pi_1$ is adjacent in $\Gamma$ to any vertex in $\pi_2$. Let $v$ be a vertex in $\pi_2$ adjacent to an admissible vertex $s$ in $\rho$. Furthermore, assume there exists another vertex $w$ in $\rho$ that is not adjacent to $v$.

Let $G$ be a solvable group such that $\Delta(G)=\Gamma$, and assume that for every proper normal subgroup $N$ of $G$, $\Delta(N)$ is a proper subgraph of $\Delta(G)$. Then a Sylow $q$-subgroup of $G$ for the prime associated to $q$ is not normal.
\end{lemma}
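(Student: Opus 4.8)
The plan is to argue by contradiction: assume a Sylow $q$-subgroup $Q$ of $G$ is normal and derive a contradiction from the prescribed configuration. Before invoking normality I would record two preliminary reductions. First, applying P\'alfy's condition (Lemma~\ref{PC}) to the triple $\{q,s,w\}$ forces the edge $s\sim w$: by hypothesis $s,w\in\rho$, so neither is adjacent to $q$, and the only way these three vertices can span an edge is $s\sim w$. Second, since $s$ is an admissible vertex and the standing hypothesis guarantees $\Delta(N)$ is a proper subgraph for every proper normal subgroup $N$, the first admissible-vertex lemma applies and gives $O^s(G)=G$; this is the sole use of the subgroup-minimality hypothesis.

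Next I would study the normal Sylow subgroup through the characteristic (hence normal) subgroup $K:=Q'=[Q,Q]$. If $Q$ were abelian, the It\^o--Michler theorem would make $q$ an isolated vertex of $\Delta(G)$, contradicting $q\sim v$; thus $Q$ is nonabelian, $K\neq 1$, and $G/K$ is a proper quotient. The technical heart is to identify $\Delta(G/K)$. Because $Q/K$ is an abelian normal Sylow $q$-subgroup of $G/K$, It\^o--Michler deletes $q$ from $\rho(G/K)$. Moreover, for any edge $a\sim b$ of $\Gamma$ with $a\notin\pi$ (so $a\not\sim q$), any witnessing $\chi\in\Irr(G)$ must satisfy $q\nmid\chi(1)$, since otherwise $aq\mid\chi(1)$ would place $a$ in $\pi$; a short Clifford-theoretic computation then shows $\chi$ restricts to a linear character of $Q$ and so is inflated from $G/K$, whence the edge survives. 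Consequently $\Delta(G/K)$ is exactly $\Gamma$ with $q$ and its incident edges removed and, at worst, some edges lying entirely inside $\pi$ also removed; every edge meeting $\rho$---in particular $s\sim v$, $s\sim w$, and the entire neighborhood of $s$---is preserved, while the non-adjacencies $\pi_1\not\sim\pi_2$ and $v\not\sim w$ persist.

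The final step is to play this surviving configuration against the fact that $\Delta(G/K)$ is itself the degree graph of a solvable group, so it obeys Theorem~\ref{genp}: its complement has no odd cycle. With $q$ deleted, the vertex that alone linked $\pi_1$ to $\pi_2$ is gone, while $v\in\pi_2$ remains tied to $\rho$ through $s$ and $s\sim w$ with $v\not\sim w$; I would exhibit a short odd cycle in $\overline{\Delta(G/K)}$ built from a vertex of $\pi_1$, the vertex $v$, and the pair $s,w$, using $O^s(G/K)=G/K$ (inherited from $O^s(G)=G$) to control the neighbors of $s$. I expect the main obstacle to be precisely this endgame: the Clifford-theoretic description of $\Delta(G/K)$ is clean, but verifying that a normal $Q$ always forces enough edges inside $\pi$ to disappear---so that the broken $\pi_1$--$\pi_2$ link genuinely completes a forbidden odd cycle, rather than leaving $\Delta(G/K)=\Gamma-q$ intact---requires careful tracking of how $v$, $s$, and $w$ sit relative to $\pi_1$ and $\pi_2$, and is where the auxiliary vertex $w$ and the admissibility of $s$ must be used in full.
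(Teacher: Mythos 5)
The paper does not actually prove this lemma; it is quoted verbatim from \cite{BL2}, so there is no in-house argument to compare against. Judged on its own terms, your proposal has a genuine gap, and it is located exactly where you suspected --- but the problem is worse than ``careful tracking'': the endgame you propose cannot work. Your analysis of $\Delta(G/Q')$ is fine (with $Q$ a normal nonabelian Sylow $q$-subgroup, $q\mid\chi(1)$ iff $Q'\not\le\ker\chi$, so $q$ disappears, every edge meeting $\rho$ survives, and only edges inside $\pi$ are at risk). The trouble is that nothing forces any edge inside $\pi$ to disappear, so $\Delta(G/Q')$ may simply be $\Gamma[q]$. But $\overline{\Gamma[q]}=\overline{\Gamma}-q$ is an induced subgraph of $\overline{\Gamma}$, which has no odd cycles by hypothesis, so Theorem \ref{genp} yields no contradiction. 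Worse, the specific cycle you gesture at cannot exist: applying P\'alfy's condition in $\Gamma$ to $\{u,v,w\}$ for any $u\in\pi_1$ forces $u\sim w$ (since $u\not\sim v$ and $v\not\sim w$), and you already have $s\sim v$ and $s\sim w$, so among $\{u,v,s,w\}$ the complement contains at most a path, never a triangle. Deleting edges inside $\pi$ is the only way to create new complement edges, and you have no mechanism to delete any.

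A second symptom of the gap is that the hypotheses which make the lemma true are never genuinely used: you invoke admissibility of $s$ only to conclude $O^s(G)=G$ and then never use that equality, and the hypothesis that $\Delta(N)$ is proper for every proper \emph{normal subgroup} $N$ (not quotient) never interacts with your quotient $G/Q'$. The actual argument in \cite{BL2} runs through the subgroup structure: assuming $Q\trianglelefteq G$, one produces a proper normal subgroup $N$ (built from $Q$ and a Hall subgroup controlled by the $\pi_1$--$\pi_2$ splitting of the neighborhood of $q$) whose character degrees already witness enough of $\Gamma$ that $\Delta(N)$ fails to be a proper subgraph, or forces $O^s(G)<G$ against the admissibility of $s$; the vertices $v$, $s$, $w$ and the edge $s\sim w$ you correctly derived are used to pin down which edges $\Delta(N)$ must contain. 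Your reduction to $G/Q'$ discards exactly the information those hypotheses are meant to exploit.
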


Lastly, we state the result which will often lead us to a final contradiction. It once again makes use of admissible vertices.

\begin{lemma}\label{thefinal}\emph{(\cite{BL2})}
Let $\Gamma$ be a graph satisfying P\'alfy's condition with $n\geq5$ vertices. Also, assume there exist distinct vertices $a$ and $b$ of $\Gamma$ such that $a$ is adjacent to an admissible vertex $c$, $b$ is not adjacent to $c$, and $a$ is not adjacent to an admissible vertex $d$.

Let $G$ be a solvable group and suppose for all proper normal subgroups $N$ of $G$ we have that $\Delta(N)$ and $\Delta(G/N)$ are proper subgraphs of $\Gamma$. Let $F$ be the Fitting subgroup of $G$ and suppose that $F$ is minimal normal in $G$. Then $\Gamma$ is not the prime character degree graph of any solvable group.
\end{lemma}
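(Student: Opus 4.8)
The plan is to argue by contradiction, assuming $\Gamma=\Delta(G)$ for the group $G$ in the statement and extracting the rigid structure forced by $F$ being at once the Fitting subgroup and minimal normal. First, the non-edge $a$–$d$ shows $\Gamma$ is not complete, so $G$ is non-nilpotent and $F\neq G$. Since minimal normal subgroups of solvable groups are elementary abelian, $F$ is an elementary abelian $r$-group for some prime $r$; since $F=F(G)$ we have $C_G(F)\subseteq F$, and abelianness gives $C_G(F)=F$. Hence $V:=F$ is a faithful irreducible module for $\overline G:=G/F$ over $\mathbb F_r$, and a standard fixed-point argument (a nontrivial normal $r$-subgroup of $\overline G$ would centralize the nonzero fixed space, which irreducibility forces to be all of $V$) gives $O_r(\overline G)=1$. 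This is the engine of the proof: by Clifford theory (see \cite{I}) every $\chi\in\Irr(G)$ either has $F\le\ker\chi$, hence is inflated from $\overline G$ and contributes to $\Delta(G/F)$, or lies over a nontrivial $\lambda\in\Irr(F)$, in which case $\chi(1)=[\overline G:\overline{I}_\lambda]\,\psi(1)$ with $\overline I_\lambda$ the $\overline G$-stabilizer of the corresponding nonzero point of the dual module and $\psi\in\Irr(I_G(\lambda)\mid\lambda)$. In particular $\Delta(G/F)$ is a proper subgraph of $\Gamma$, and every edge of $\Gamma$ missing from it must be realized by a second-kind degree $[\overline G:\overline I_\lambda]\,\psi(1)$.

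Second, I would turn the standing hypothesis to account. Because $\Delta(N)$ is a proper subgraph of $\Gamma$ for every proper normal $N\trianglelefteq G$, the admissibility of $c$ and of $d$ lets me apply the first admissible-vertex lemma of \cite{BL2} to obtain $O^{c}(G)=G$ and $O^{d}(G)=G$; equivalently, $G$ admits no nontrivial factor group that is a $c$-group or a $d$-group. Moreover, any normal Sylow subgroup of $G$ lies in $F(G)=F$, which is an $r$-group, so the only prime that can carry a normal Sylow subgroup is $r$, and then that subgroup is forced to equal $F$. Where the local configuration at a vertex permits, I would also bring in Lemma \ref{pi} (and, for any strongly admissible vertex, Lemma \ref{strong}) to exclude normal Sylow subgroups at the primes entering the configuration $a,b,c,d$. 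The upshot is that the vertices $c$ and $d$—and the edges and non-edges recorded at $a,b,c,d$—are controlled not by $\Delta(G/F)$ but by the Clifford degrees of the second kind, that is, by the orbits of the faithful module $V$.

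Third comes the contradiction. By Corollary \ref{genpcor}, $\rho(G)$ is covered by two cliques of $\Gamma$, so $\overline\Gamma$ is bipartite; the non-edges $b$–$c$ and $a$–$d$ then constrain how $a,b,c,d$ distribute across the two parts, while the edge $a$–$c$ and Pálfy's condition (Lemma \ref{PC}) restrict which triples may be mutually non-adjacent. Feeding these constraints into the orbit/stabilizer description of the second-kind degrees, I expect to produce a proper section of $G$—either a quotient $G/N$ or a normal subgroup $N$—whose prime character degree graph is \emph{exactly} one of the subgraphs that admissibility of $c$ (removing the vertex $c$, or removing edges incident to $c$) or of $d$ declares non-occurring. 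Since the degree graph of a solvable group does occur, this contradicts the defining conditions of admissibility; alternatively the same section violates the standing assumption that all proper sections have degree graphs that are \emph{proper} subgraphs of $\Gamma$. Either way no such $G$ exists, and $\Gamma$ is not the prime character degree graph of any solvable group.

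The main obstacle is precisely the matching carried out in the third step: translating the purely combinatorial adjacency data at $a,b,c,d$ into exact statements about the $\overline G$-orbits on $V\setminus\{0\}$ and the stabilizer degrees $\psi(1)$, and then certifying that the section one manufactures realizes a graph that admissibility guarantees is non-occurring. This is where the faithful-irreducible-module structure, the equalities $O^{c}(G)=O^{d}(G)=G$, and the two-clique cover of Corollary \ref{genpcor} must all be used in concert; by comparison, the module-theoretic setup of the first paragraph and the bipartiteness of $\overline\Gamma$ are routine.
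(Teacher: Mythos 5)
This lemma is not proved in the paper at all; it is quoted as a known result from \cite{BL2}, so your attempt can only be measured against what would be needed to actually establish it. Your first two paragraphs are correct and are the standard setup for arguments of this kind: $F$ minimal normal and equal to the Fitting subgroup forces $F$ to be elementary abelian with $C_G(F)=F$, so $F$ is a faithful irreducible $G/F$-module; Clifford theory splits $\cd(G)$ into degrees inflated from $G/F$ and degrees of the form $|G:I_G(\lambda)|\,\psi(1)$ for nontrivial $\lambda\in\Irr(F)$; and the admissibility of $c$ and $d$ together with the hypothesis on proper normal subgroups gives $O^{c}(G)=G=O^{d}(G)$. None of that is in dispute.

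The genuine gap is that your third step --- which is the entire content of the lemma --- is never carried out. You write that you ``expect to produce a proper section of $G$'' whose degree graph is exactly one of the subgraphs that admissibility of $c$ or $d$ certifies as non-occurring, but you do not construct such a section, you do not explain which normal subgroup or quotient it would be, and you give no argument that the orbit/stabilizer data of the module $V$ must realize precisely one of those finitely many forbidden graphs rather than something else. The specific adjacency configuration in the hypotheses (the edge $a$--$c$, the non-edges $b$--$c$ and $a$--$d$, with both $c$ and $d$ admissible) is exactly what has to be exploited here: the argument in \cite{BL2} uses it, together with $O^{c}(G)=O^{d}(G)=G$ and the stabilizer indices $|G:I_G(\lambda)|$, to force a character degree creating an edge that $\Gamma$ forbids, and this translation from the combinatorial data at $a,b,c,d$ to a concrete divisibility statement is precisely the step you flag as ``the main obstacle'' and then leave undone. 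As written, the proposal is a correct framing plus a statement of intent, not a proof.
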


%%%%%%%%%%%%%%%%%%%%%%%%%%%%%%%%%%%%%%%%%
\subsection{Notation and conventions}
%%%%%%%%%%%%%%%%%%%%%%%%%%%%%%%%%%%%%%%%%

To simplify language throughout this paper, we will adopt the shorthand of referring to a graph $\Gamma$ as \emph{occurring} if $\Gamma=\Delta(G)$ for some finite solvable group $G$, and \emph{non-occurring} otherwise.

Moreover, following \cite{DLM}, we shall make use of the notation $\Gamma[p]$ to denote the subgraph of $\Gamma$ obtained by deleting $p$ and its incident edges from $\Gamma$. Furthermore, we will use the expression $\epsilon(p,q)$ to refer to the edge joining $p$ and $q$, and therefore $\Gamma[\epsilon(p,q)]$ denotes the subgraph of $\Gamma$ obtained by deleting the edge between $p$ and $q$, but not the vertices themselves. We can then combine these notations, where for example $\Gamma[p_1,\epsilon(p_2,p_3)]$ would be the subgraph of $\Gamma$ obtained by deleting $p_1$ and its incident edges, along with the edge connecting $p_2$ and $p_3$. Making use of this notation is helpful when referring to subgraphs, which is done quite frequently when showing that a certain vertex is admissible. Specifically, we will make use of this often in Section \ref{sec3and4}.

Finally, when referring to a complete graph on $n$ vertices, we will use the common notation of $K_n$. It is well-known that $K_n$ occurs as the prime character degree graph for some solvable group for all $n\in\mathbb{N}$. This will be useful in Section \ref{sec1and6} where we often use $K_n$ to construct a new group via a direct product. For completion's sake, one can recall that the direct product between two groups, denoted $H\times K$, results in their vertex set as $\rho(H\times K)=\rho(H)\cup\rho(K)$. Since there are infinitely many primes for which these groups have their respective graphs, it is no problem requiring that $\rho(H)$ and $\rho(K)$ be disjoint. There is then an edge between $p$ and $q$ in $\Delta(H\times K)$ if either there was an edge between $p$ and $q$ in $\Delta(H)$ or $\Delta(K)$, or if $p\in\rho(H)$ and $q\in\rho(K)$. Direct products are a great way to build new groups and graphs from old ones. In fact, the singleton $K_1$ can be easily constructed as $\Delta(G)$ for some finite solvable group $G$, and then an induction argument yields $K_n=K_1\times K_{n-1}$.

%%%%%%%%%%%%%%%%%%%%%%%%%%%%%%%%%%%%%%%%%%%%%%%%%%%%%%%%%%%%%%%%%%%%%%%%%%%%%%%%%%
\section{Disconnected Graphs}\label{secD}
%%%%%%%%%%%%%%%%%%%%%%%%%%%%%%%%%%%%%%%%%%%%%%%%%%%%%%%%%%%%%%%%%%%%%%%%%%%%%%%%%%

In this section, we fully classify the disconnected graphs with seven vertices. Due to P\'alfy's condition, if a prime character degree graph is disconnected, then there can only be two components. Furthermore, each component must be complete. This leaves us with three cases concerning the sizes of the two complete components: (1) one and six, (2) two and five, and (3) three and four.

We will show, by construction, that only the disconnected graphs in Figure \ref{figDO} occur as the prime character degree graph of a solvable group.

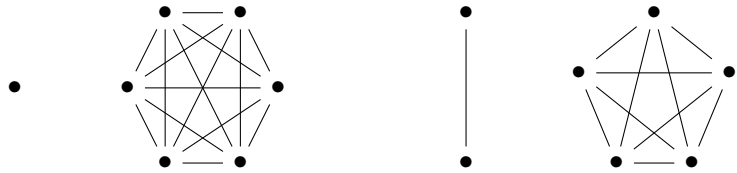
\begin{figure}[htb]
    \centering
$
\begin{tikzpicture}[scale=2]
\node (1a) at (0,.5) {$\bullet$};
\node (2a) at (.75,.5) {$\bullet$};
\node (3a) at (1,1) {$\bullet$};
\node (4a) at (1,0) {$\bullet$};
\node (5a) at (1.5,1) {$\bullet$};
\node (6a) at (1.5,0) {$\bullet$};
\node (7a) at (1.75,.5) {$\bullet$};
\path[font=\small,>=angle 90]
(2a) edge node [right] {$ $} (3a)
(2a) edge node [right] {$ $} (4a)
(2a) edge node [right] {$ $} (5a)
(2a) edge node [right] {$ $} (6a)
(2a) edge node [right] {$ $} (7a)
(3a) edge node [right] {$ $} (4a)
(3a) edge node [right] {$ $} (5a)
(3a) edge node [right] {$ $} (6a)
(3a) edge node [right] {$ $} (7a)
(4a) edge node [right] {$ $} (5a)
(4a) edge node [right] {$ $} (6a)
(4a) edge node [right] {$ $} (7a)
(5a) edge node [right] {$ $} (6a)
(5a) edge node [right] {$ $} (7a)
(6a) edge node [right] {$ $} (7a);
\node (1b) at (3,1) {$\bullet$};
\node (2b) at (3,0) {$\bullet$};
\node (3b) at (3.75,.6) {$\bullet$};
\node (4b) at (4,0) {$\bullet$};
\node (5b) at (4.5,0) {$\bullet$};
\node (6b) at (4.25,1) {$\bullet$};
\node (7b) at (4.75,.6) {$\bullet$};
\path[font=\small,>=angle 90]
(1b) edge node [right] {$ $} (2b)
(3b) edge node [right] {$ $} (4b)
(3b) edge node [right] {$ $} (5b)
(3b) edge node [right] {$ $} (6b)
(3b) edge node [right] {$ $} (7b)
(4b) edge node [right] {$ $} (5b)
(4b) edge node [right] {$ $} (6b)
(4b) edge node [right] {$ $} (7b)
(5b) edge node [right] {$ $} (6b)
(5b) edge node [right] {$ $} (7b)
(6b) edge node [right] {$ $} (7b);
\end{tikzpicture}
$
    \caption{Disconnected graphs that occur}
    \label{figDO}
\end{figure}

%%%%%%%%%%%%%%%%%%%%%%%%%%%%%%%%%%%%%%%%%
\subsection{Constructing case (1)}\label{secD1}
%%%%%%%%%%%%%%%%%%%%%%%%%%%%%%%%%%%%%%%%%

Following the constructions seen in \cite{BLL} or \cite{L3}, we begin by considering the field of order $2^{81}$ acted on by its full multiplication group and then its Galois group. Set the resulting solvable group as $G_1$. Noting that $2^{81}-1$ is the product of six distinct prime numbers, namely $2^{81}-1=7\cdot73\cdot2593\cdot71119\cdot262657\cdot97685839$, we get that
$$
\cd(G_1)=\{1,3,9,27,81,2^{81}-1\}.
$$
It is easy to see that $\Delta(G_1)$ is the disconnected graph with two complete components, as seen in Figure \ref{figDO1}. For ease of notation, set $p=71119$, $q=262657$, and $r=97685839$.

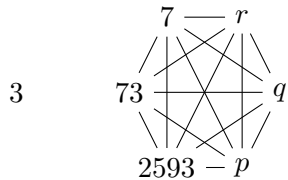
\begin{figure}[htb]
    \centering
$
\begin{tikzpicture}[scale=2]
\node (1a) at (0,.5) {$3$};
\node (2a) at (.75,.5) {$73$};
\node (3a) at (1,1) {$7$};
\node (4a) at (1,0) {$2593$};
\node (5a) at (1.5,1) {$r$};
\node (6a) at (1.5,0) {$p$};
\node (7a) at (1.75,.5) {$q$};
\path[font=\small,>=angle 90]
(2a) edge node [right] {$ $} (3a)
(2a) edge node [right] {$ $} (4a)
(2a) edge node [right] {$ $} (5a)
(2a) edge node [right] {$ $} (6a)
(2a) edge node [right] {$ $} (7a)
(3a) edge node [right] {$ $} (4a)
(3a) edge node [right] {$ $} (5a)
(3a) edge node [right] {$ $} (6a)
(3a) edge node [right] {$ $} (7a)
(4a) edge node [right] {$ $} (5a)
(4a) edge node [right] {$ $} (6a)
(4a) edge node [right] {$ $} (7a)
(5a) edge node [right] {$ $} (6a)
(5a) edge node [right] {$ $} (7a)
(6a) edge node [right] {$ $} (7a);
\end{tikzpicture}
$
    \caption{The graph $\Delta(G_1)$}
    \label{figDO1}
\end{figure}

%%%%%%%%%%%%%%%%%%%%%%%%%%%%%%%%%%%%%%%%%
\subsection{Constructing case (2)}\label{secD2}
%%%%%%%%%%%%%%%%%%%%%%%%%%%%%%%%%%%%%%%%%

Similarly, now consider the field of order $2^{51}$ acted on by its full multiplication group and then its Galois group. Let this solvable group be $G_2$. Observe that $2^{51}-1$ is the product of five distinct prime numbers: in particular, $2^{51}-1=7\cdot103\cdot2143\cdot11119\cdot131071$. One can compute that
$$
\cd(G_2)=\{1,3,17,51,2^{51}-1\}.
$$
Setting $p=11119$ and $q=131071$, one can see the resulting prime character degree graph in Figure \ref{figDO2}.

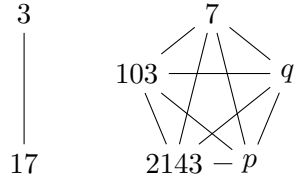
\begin{figure}[htb]
    \centering
$
\begin{tikzpicture}[scale=2]
\node (1b) at (3,1) {$3$};
\node (2b) at (3,0) {$17$};
\node (3b) at (3.75,.6) {$103$};
\node (4b) at (4,0) {$2143$};
\node (5b) at (4.5,0) {$p$};
\node (6b) at (4.25,1) {$7$};
\node (7b) at (4.75,.6) {$q$};
\path[font=\small,>=angle 90]
(1b) edge node [right] {$ $} (2b)
(3b) edge node [right] {$ $} (4b)
(3b) edge node [right] {$ $} (5b)
(3b) edge node [right] {$ $} (6b)
(3b) edge node [right] {$ $} (7b)
(4b) edge node [right] {$ $} (5b)
(4b) edge node [right] {$ $} (6b)
(4b) edge node [right] {$ $} (7b)
(5b) edge node [right] {$ $} (6b)
(5b) edge node [right] {$ $} (7b)
(6b) edge node [right] {$ $} (7b);
\end{tikzpicture}
$
    \caption{The graph $\Delta(G_2)$}
    \label{figDO2}
\end{figure}

%%%%%%%%%%%%%%%%%%%%%%%%%%%%%%%%%%%%%%%%%
\subsection{Eliminating case (3)}\label{secD3}
%%%%%%%%%%%%%%%%%%%%%%%%%%%%%%%%%%%%%%%%%

It is clear that the disconnected graph in Figure \ref{figDDNO} does not occur as the prime character degree graph of any solvable group. This is due to P\'alfy's inequality (see Theorem \ref{PI}) by taking $a=3$ and $b=4$. Notice then that
$$
4=b\geq2^a-1=2^3-1=7,
$$
a contradiction.

\begin{figure}[htb]
    \centering
$
\begin{tikzpicture}[scale=2]
\node (1) at (0,0) {$\bullet$};
\node (2) at (0,1) {$\bullet$};
\node (3) at (.5,.5) {$\bullet$};
\node (4) at (1.25,.5) {$\bullet$};
\node (5) at (1.75,1) {$\bullet$};
\node (6) at (1.75,0) {$\bullet$};
\node (7) at (2.25,.5) {$\bullet$};
\path[font=\small,>=angle 90]
(1) edge node [right] {$ $} (2)
(1) edge node [right] {$ $} (3)
(2) edge node [right] {$ $} (3)
(4) edge node [right] {$ $} (5)
(4) edge node [right] {$ $} (6)
(4) edge node [right] {$ $} (7)
(5) edge node [right] {$ $} (6)
(5) edge node [right] {$ $} (7)
(6) edge node [right] {$ $} (7);
\end{tikzpicture}
$
    \caption{Disconnected graph that does not occur}
    \label{figDDNO}
\end{figure}
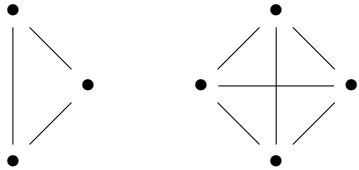

%%%%%%%%%%%%%%%%%%%%%%%%%%%%%%%%%%%%%%%%%%%%%%%%%%%%%%%%%%%%%%%%%%%%%%%%%%%%%%%%%%
\section{Connected Graphs}\label{seccon}
%%%%%%%%%%%%%%%%%%%%%%%%%%%%%%%%%%%%%%%%%%%%%%%%%%%%%%%%%%%%%%%%%%%%%%%%%%%%%%%%%%

There are $853$ non-isomorphic simple connected graphs with seven vertices, as per \cite{MP}. Due to the powerful landmark result from \cite{A} (see Corollary \ref{genpcor}), we know that if a graph $\Gamma$ is going to occur as the prime character degree graph of some solvable group, then the vertex set $\rho(\Gamma)$ must be covered by two subsets, each forming a complete subgraph (a clique). Following the above, we are able to immediately reduce the possibilities to eighty-five eligible graphs. Much like the disconnected cases from Section \ref{secD}, we too have three options here for the connected case. A connected graph with seven vertices, if it is to occur, must therefore be covered by cliques of one and six, two and five, or three and four.

We then organize these remaining eighty-five graphs into one of the three Appendices \ref{AA}, \ref{AB}, or \ref{AC}. Each graph is placed into the appendix of the largest possible clique. For example, the graph $A_1$ is covered by cliques of one and six, but also covered by cliques of two and five. But since we want to place it into exactly one category, we choose to place it into the highest available clique: one and six, instead of two and five.

These remaining eighty-five graphs, the complete lists of which are organized by edge count within each Appendix, will then be discussed in Sections \ref{sec1and6}, \ref{sec2and5}, and \ref{sec3and4}, respectively. In particular, Appendix \ref{AA} has six graphs, Appendix \ref{AB} houses twenty-six graphs, while Appendix \ref{AC} contains fifty-three graphs, totalling the aforementioned eighty-five. As we shall see in the sections that follow, of these eighty-five, we conclude that twenty-two of these graphs do indeed occur as $\Delta(G)$ for some finite solvable group $G$, whereas nineteen graphs do not occur as the prime character degree graph of any solvable group. This leaves us with forty-four graphs that are yet-to-be classified, and therefore possibly occur.

%%%%%%%%%%%%%%%%%%%%%%%%%%%%%%%%%%%%%%%%%%%%%%%%%%%%%%%%%%%%%%%%%%%%%%%%%%%%%%%%%%
\section{Cliques of One and Six}\label{sec1and6}
%%%%%%%%%%%%%%%%%%%%%%%%%%%%%%%%%%%%%%%%%%%%%%%%%%%%%%%%%%%%%%%%%%%%%%%%%%%%%%%%%%

There are six connected graphs with cliques of one and six, all of which we fully classify. One can see Appendix \ref{AA} for the complete list. In fact, each graph is a direct product of graphs that have been shown to occur in previous papers, or graphs that have already been studied.

It is not difficult to see that $A_1$ is the graph $\Gamma_{6,1}$ from \cite{BL} (see Theorem \ref{KT}). Next, the graph $A_i$ can be realized as the direct product between $\Gamma_{7-i,1}$ and $K_{i-1}$ for all $2\leq i\leq 5$. Finally, the graph $A_6$ is exactly $K_7$. Hence, all the graphs in Figure \ref{figA} occur as the prime character degree graph of a solvable group.

\begin{figure}[htb]
    \centering
$
\begin{tikzpicture}[scale=2]
\node (1a) at (0,.5) {$\bullet$};
\node (2a) at (.75,.5) {$\bullet$};
\node (3a) at (1,1) {$\bullet$};
\node (4a) at (1,0) {$\bullet$};
\node (5a) at (1.5,1) {$\bullet$};
\node (6a) at (1.5,0) {$\bullet$};
\node (7a) at (1.75,.5) {$\bullet$};
\path[font=\small,>=angle 90]
(2a) edge node [right] {$ $} (3a)
(2a) edge node [right] {$ $} (4a)
(2a) edge node [right] {$ $} (5a)
(2a) edge node [right] {$ $} (6a)
(2a) edge node [right] {$ $} (7a)
(3a) edge node [right] {$ $} (4a)
(3a) edge node [right] {$ $} (5a)
(3a) edge node [right] {$ $} (6a)
(3a) edge node [right] {$ $} (7a)
(4a) edge node [right] {$ $} (5a)
(4a) edge node [right] {$ $} (6a)
(4a) edge node [right] {$ $} (7a)
(5a) edge node [right] {$ $} (6a)
(5a) edge node [right] {$ $} (7a)
(6a) edge node [right] {$ $} (7a)
(1a) edge node [right] {$ $} (2a);
\node (1b) at (2.5,.5) {$\bullet$};
\node (2b) at (3.25,.7) {$\bullet$};
\node (3b) at (3.25,.3) {$\bullet$};
\node (4b) at (3.75,1) {$\bullet$};
\node (5b) at (3.75,0) {$\bullet$};
\node (6b) at (4.25,.7) {$\bullet$};
\node (7b) at (4.25,.3) {$\bullet$};
\path[font=\small,>=angle 90]
(2b) edge node [right] {$ $} (3b)
(2b) edge node [right] {$ $} (4b)
(2b) edge node [right] {$ $} (5b)
(2b) edge node [right] {$ $} (6b)
(2b) edge node [right] {$ $} (7b)
(3b) edge node [right] {$ $} (4b)
(3b) edge node [right] {$ $} (5b)
(3b) edge node [right] {$ $} (6b)
(3b) edge node [right] {$ $} (7b)
(4b) edge node [right] {$ $} (5b)
(4b) edge node [right] {$ $} (6b)
(4b) edge node [right] {$ $} (7b)
(5b) edge node [right] {$ $} (6b)
(5b) edge node [right] {$ $} (7b)
(6b) edge node [right] {$ $} (7b)
(1b) edge node [right] {$ $} (2b)
(1b) edge node [right] {$ $} (3b);
\node (1c) at (5,.5) {$\bullet$};
\node (2c) at (5.75,.5) {$\bullet$};
\node (3c) at (6,1) {$\bullet$};
\node (4c) at (6,0) {$\bullet$};
\node (5c) at (6.5,1) {$\bullet$};
\node (6c) at (6.5,0) {$\bullet$};
\node (7c) at (6.75,.5) {$\bullet$};
\path[font=\small,>=angle 90]
(2c) edge node [right] {$ $} (3c)
(2c) edge node [right] {$ $} (4c)
(2c) edge node [right] {$ $} (5c)
(2c) edge node [right] {$ $} (6c)
(2c) edge node [right] {$ $} (7c)
(3c) edge node [right] {$ $} (4c)
(3c) edge node [right] {$ $} (5c)
(3c) edge node [right] {$ $} (6c)
(3c) edge node [right] {$ $} (7c)
(4c) edge node [right] {$ $} (5c)
(4c) edge node [right] {$ $} (6c)
(4c) edge node [right] {$ $} (7c)
(5c) edge node [right] {$ $} (6c)
(5c) edge node [right] {$ $} (7c)
(6c) edge node [right] {$ $} (7c)
(1c) edge node [right] {$ $} (2c)
(1c) edge node [right] {$ $} (3c)
(1c) edge node [right] {$ $} (4c);
\node (1d) at (0,-1) {$\bullet$};
\node (2d) at (.75,-.8) {$\bullet$};
\node (3d) at (.75,-1.2) {$\bullet$};
\node (4d) at (1.25,-.5) {$\bullet$};
\node (5d) at (1.25,-1.5) {$\bullet$};
\node (6d) at (1.75,-.8) {$\bullet$};
\node (7d) at (1.75,-1.2) {$\bullet$};
\path[font=\small,>=angle 90]
(2d) edge node [right] {$ $} (3d)
(2d) edge node [right] {$ $} (4d)
(2d) edge node [right] {$ $} (5d)
(2d) edge node [right] {$ $} (6d)
(2d) edge node [right] {$ $} (7d)
(3d) edge node [right] {$ $} (4d)
(3d) edge node [right] {$ $} (5d)
(3d) edge node [right] {$ $} (6d)
(3d) edge node [right] {$ $} (7d)
(4d) edge node [right] {$ $} (5d)
(4d) edge node [right] {$ $} (6d)
(4d) edge node [right] {$ $} (7d)
(5d) edge node [right] {$ $} (6d)
(5d) edge node [right] {$ $} (7d)
(6d) edge node [right] {$ $} (7d)
(1d) edge node [right] {$ $} (2d)
(1d) edge node [right] {$ $} (3d)
(1d) edge node [right] {$ $} (4d)
(1d) edge node [right] {$ $} (5d);
\node (1e) at (2.5,-1) {$\bullet$};
\node (2e) at (3.25,-1) {$\bullet$};
\node (3e) at (3.5,-.5) {$\bullet$};
\node (4e) at (3.5,-1.5) {$\bullet$};
\node (5e) at (4,-.5) {$\bullet$};
\node (6e) at (4,-1.5) {$\bullet$};
\node (7e) at (4.25,-1) {$\bullet$};
\path[font=\small,>=angle 90]
(2e) edge node [right] {$ $} (3e)
(2e) edge node [right] {$ $} (4e)
(2e) edge node [right] {$ $} (5e)
(2e) edge node [right] {$ $} (6e)
(2e) edge node [right] {$ $} (7e)
(3e) edge node [right] {$ $} (4e)
(3e) edge node [right] {$ $} (5e)
(3e) edge node [right] {$ $} (6e)
(3e) edge node [right] {$ $} (7e)
(4e) edge node [right] {$ $} (5e)
(4e) edge node [right] {$ $} (6e)
(4e) edge node [right] {$ $} (7e)
(5e) edge node [right] {$ $} (6e)
(5e) edge node [right] {$ $} (7e)
(6e) edge node [right] {$ $} (7e)
(1e) edge node [right] {$ $} (2e)
(1e) edge node [right] {$ $} (3e)
(1e) edge node [right] {$ $} (4e)
(1e) edge node [right] {$ $} (5e)
(1e) edge node [right] {$ $} (6e);
\node (1f) at (5,-1) {$\bullet$};
\node (2f) at (5.75,-.8) {$\bullet$};
\node (3f) at (5.75,-1.2) {$\bullet$};
\node (4f) at (6.25,-.5) {$\bullet$};
\node (5f) at (6.25,-1.5) {$\bullet$};
\node (6f) at (6.75,-.8) {$\bullet$};
\node (7f) at (6.75,-1.2) {$\bullet$};
\path[font=\small,>=angle 90]
(2f) edge node [right] {$ $} (3f)
(2f) edge node [right] {$ $} (4f)
(2f) edge node [right] {$ $} (5f)
(2f) edge node [right] {$ $} (6f)
(2f) edge node [right] {$ $} (7f)
(3f) edge node [right] {$ $} (4f)
(3f) edge node [right] {$ $} (5f)
(3f) edge node [right] {$ $} (6f)
(3f) edge node [right] {$ $} (7f)
(4f) edge node [right] {$ $} (5f)
(4f) edge node [right] {$ $} (6f)
(4f) edge node [right] {$ $} (7f)
(5f) edge node [right] {$ $} (6f)
(5f) edge node [right] {$ $} (7f)
(6f) edge node [right] {$ $} (7f)
(1f) edge node [right] {$ $} (2f)
(1f) edge node [right] {$ $} (3f)
(1f) edge node [right] {$ $} (4f)
(1f) edge node [right] {$ $} (5f)
(1f) edge node [right] {$ $} (6f)
(1f) edge node [right] {$ $} (7f);
\end{tikzpicture}
$
    \caption{The graphs $A_i$ for $1\leq i\leq6$}
    \label{figA}
\end{figure}
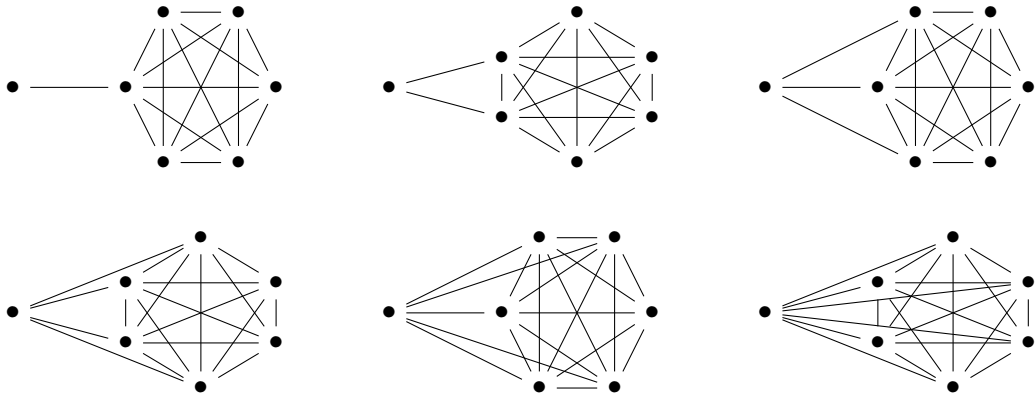

%%%%%%%%%%%%%%%%%%%%%%%%%%%%%%%%%%%%%%%%%%%%%%%%%%%%%%%%%%%%%%%%%%%%%%%%%%%%%%%%%%
\section{Cliques of Two and Five}\label{sec2and5}
%%%%%%%%%%%%%%%%%%%%%%%%%%%%%%%%%%%%%%%%%%%%%%%%%%%%%%%%%%%%%%%%%%%%%%%%%%%%%%%%%%

There are twenty-six connected graphs whose vertex sets are covered by cliques of two and five, where one can see Appendix \ref{AB} for the complete list. In this section we will classify fifteen of these graphs, leaving eleven unknown.

Thus, the goal of this section is to show that all the graphs in Figures \ref{figBDT}, \ref{figB15}, and \ref{figBDP} occur, and that all the graphs in Figure \ref{figBM} possibly occur. Otherwise, as we shall see, the graphs in Figure \ref{figBDNO} do not occur as the prime character degree graph of any solvable group.

\begin{figure}[htb]
    \centering
$
\begin{tikzpicture}[scale=2]
\node (1a) at (0,1) {$\bullet$};
\node (2a) at (0,0) {$\bullet$};
\node (3a) at (.75,.8) {$\bullet$};
\node (4a) at (.75,.2) {$\bullet$};
\node (5a) at (1.35,1) {$\bullet$};
\node (6a) at (1.35,0) {$\bullet$};
\node (7a) at (1.75,.5) {$\bullet$};
\path[font=\small,>=angle 90]
(1a) edge node [right] {$ $} (2a)
(3a) edge node [right] {$ $} (4a)
(3a) edge node [right] {$ $} (5a)
(3a) edge node [right] {$ $} (6a)
(3a) edge node [right] {$ $} (7a)
(4a) edge node [right] {$ $} (5a)
(4a) edge node [right] {$ $} (6a)
(4a) edge node [right] {$ $} (7a)
(5a) edge node [right] {$ $} (6a)
(5a) edge node [right] {$ $} (7a)
(6a) edge node [right] {$ $} (7a)
(1a) edge node [right] {$ $} (3a)
(1a) edge node [right] {$ $} (4a)
(1a) edge node [right] {$ $} (5a);
\node (1b) at (2.75,1) {$\bullet$};
\node (2b) at (2.75,0) {$\bullet$};
\node (3b) at (3.5,.8) {$\bullet$};
\node (4b) at (3.5,.2) {$\bullet$};
\node (5b) at (4.1,1) {$\bullet$};
\node (6b) at (4.1,0) {$\bullet$};
\node (7b) at (4.5,.5) {$\bullet$};
\path[font=\small,>=angle 90]
(1b) edge node [right] {$ $} (2b)
(3b) edge node [right] {$ $} (4b)
(3b) edge node [right] {$ $} (5b)
(3b) edge node [right] {$ $} (6b)
(3b) edge node [right] {$ $} (7b)
(4b) edge node [right] {$ $} (5b)
(4b) edge node [right] {$ $} (6b)
(4b) edge node [right] {$ $} (7b)
(5b) edge node [right] {$ $} (6b)
(5b) edge node [right] {$ $} (7b)
(6b) edge node [right] {$ $} (7b)
(1b) edge node [right] {$ $} (3b)
(1b) edge node [right] {$ $} (4b)
(1b) edge node [right] {$ $} (5b)
(1b) edge node [right] {$ $} (6b);
\end{tikzpicture}
$
    \caption{The graphs $B_3$ and $B_4$}
    \label{figBDT}
\end{figure}
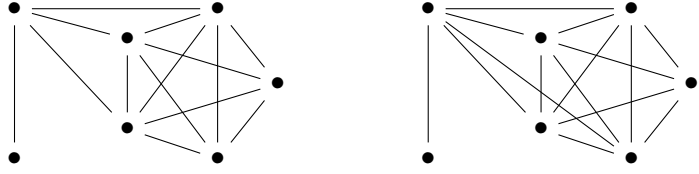

The graphs in Figure \ref{figBDT} will be constructed below, but earn special attention as they both have diameter three, and are not merely constructed as direct products.

%%%%%%%%%%%%%%%%%%%%%%%%%%%%%%%%%%%%%%%%%
\subsection{Constructing the graphs $B_3$ and $B_4$}\label{secBB3}
%%%%%%%%%%%%%%%%%%%%%%%%%%%%%%%%%%%%%%%%%

The graphs $B_3$ and $B_4$ result from a direct application of the techniques found in \cite{D}, which expand on the construction found in \cite{L2}. These works explicitly construct solvable groups for which the character degree graph has diameter three. Of particular note in \cite{D} is that the resulting character degree graph is easy to calculate directly. We begin by choosing primes $p$, $q$, and $r$ that satisfy certain divisibility conditions. 

Specifically, we allow $p$ to be any prime. From here, we require that $q$ and $r$ are chosen so that $q < r$ and the quantities $\Phi_{q}(p)$, $\Phi_{r}(p)$, and $\Phi_{qr}(p)$ are pairwise coprime, with $\Phi_{n}(x)$ being the $n$th cyclotomic polynomial. As illustrated in \cite{D}, the computation of character degrees is substantially more complicated when one chooses $q$ to be a prime larger than 3. Fortunately, the case $q = 3$ is sufficient to construct both $B_3$ and $B_4$.

To construct $G$, we begin with a field $F$ of order $p^{3r}$. We then consider $F\{ X \}$, the skew polynomial ring, in which
$$
X \alpha = \alpha^p X
$$
for all $\alpha \in F$. Next, consider the quotient ring, $R$, of $F\{X\}$ and the ideal generated by $X^{4}$. Let $J$ be the Jacobson radical of $R$, and note that elements of $J$ all have the form $\alpha_1 x + \alpha_2 x^2 + \alpha_3 x^3$, where $x$ is the image of the indeterminate $X$ in $R$. Then, define $P = 1 + J$, which is a group of order $p^{9r}$. We next let $C$ be the subgroup of $F^\times$ that has order $\frac{p^{3r}-1}{p-1}$, and define 
$$
T = P \rtimes_{\varphi} C,
$$
where $c \in C$ acts on $P$ via the automorphism defined by:
$$
\varphi_c (1 + \alpha_1 x + \alpha_2 x^2 + \alpha_3 x^3) = 1 + \alpha_1 c x + \alpha_2 c^{p+1} x^2 + \alpha_3 c^{p^2 + p + 1} x^3.
$$
Finally, $G$ is the semidirect product of $T$ and $\mathcal{G}$, with $\mathcal{G}$ being the Galois group of $F$ over its prime subfield. One can see that $|G| = 3rp^{9r} \left( \frac{p^{3r}-1}{p-1} \right)$, so the constructed group is indeed finite. 

To construct $B_3$ and $B_4$, we impose the additional condition that the quantity $\frac{p^{3r} - 1}{p-1} = stuv$, with $s$, $t$, $u$, and $v$ being prime and distinct from $p$, $q$, and $r$; this helps to ensure a seven-vertex character degree graph. The argument in \cite{D} yields the following set of character degrees:
\begin{align*}
\cd(G) = \left\{ 1, 3, r, 3r, stuv, p^{\frac{3r-1}{2}} stuv, 3p^{3r} \left( \frac{stuv}{p^2 + p + 1} \right), p^{3r-3} \left( \frac{stuv}{p^2 + p + 1} \right),\right. \\ \left. 3p^{3r-3} \left( \frac{stuv}{p^2 + p + 1} \right), p^{3r-3} stuv, p^{3r-2} stuv \right\}.
\end{align*}

Let $G_3$ correspond to the choices $p = 23$, $q=3$ and $r = 5$, which gives $s = 7$, $t = 79$, $u = 292561$, and $v = 74912328481$. In this case, we get
$$
\cd(G_3) = \{1, 3, 5, 15, stuv, 23^7 stuv, 3 \cdot 23^{15} uv, 23^{12} uv, 3 \cdot 23^{12} uv, 23^{12} stuv, 23^{13} stuv\}.
$$
Similarly, let $G_4$ correspond to the choices $p = 2$, $q = 3$, and $r = 11$, giving $s = 7$, $t = 23$, $u = 89$, and $v = 599479$. This yields
$$
\cd(G_4) = \{1, 3, 11, 33, stuv, 2^{16} stuv, 3 \cdot 2^{33} tuv, 2^{30} tuv, 3 \cdot 2^{30} tuv, 2^{30} stuv, 2^{31} stuv \}.
$$
We can see $\Delta(G_3)$ and $\Delta(G_4)$ in Figure \ref{figBCon}, which are clearly $B_3$ and $B_4$, respectively.

\begin{figure}[htb]
    \centering
$
\begin{tikzpicture}[scale=2]
\node (1a) at (0,1) {$3$};
\node (2a) at (0,0) {$5$};
\node (3a) at (.75,.8) {$v$};
\node (4a) at (.75,.2) {$u$};
\node (5a) at (1.35,1) {$23$};
\node (6a) at (1.35,0) {$t$};
\node (7a) at (1.75,.5) {$s$};
\path[font=\small,>=angle 90]
(1a) edge node [right] {$ $} (2a)
(3a) edge node [right] {$ $} (4a)
(3a) edge node [right] {$ $} (5a)
(3a) edge node [right] {$ $} (6a)
(3a) edge node [right] {$ $} (7a)
(4a) edge node [right] {$ $} (5a)
(4a) edge node [right] {$ $} (6a)
(4a) edge node [right] {$ $} (7a)
(5a) edge node [right] {$ $} (6a)
(5a) edge node [right] {$ $} (7a)
(6a) edge node [right] {$ $} (7a)
(1a) edge node [right] {$ $} (3a)
(1a) edge node [right] {$ $} (4a)
(1a) edge node [right] {$ $} (5a);
\node (1b) at (2.75,1) {$3$};
\node (2b) at (2.75,0) {$11$};
\node (3b) at (3.5,.8) {$v$};
\node (4b) at (3.5,.2) {$u$};
\node (5b) at (4.1,1) {$2$};
\node (6b) at (4.1,0) {$t$};
\node (7b) at (4.5,.5) {$s$};
\path[font=\small,>=angle 90]
(1b) edge node [right] {$ $} (2b)
(3b) edge node [right] {$ $} (4b)
(3b) edge node [right] {$ $} (5b)
(3b) edge node [right] {$ $} (6b)
(3b) edge node [right] {$ $} (7b)
(4b) edge node [right] {$ $} (5b)
(4b) edge node [right] {$ $} (6b)
(4b) edge node [right] {$ $} (7b)
(5b) edge node [right] {$ $} (6b)
(5b) edge node [right] {$ $} (7b)
(6b) edge node [right] {$ $} (7b)
(1b) edge node [right] {$ $} (3b)
(1b) edge node [right] {$ $} (4b)
(1b) edge node [right] {$ $} (5b)
(1b) edge node [right] {$ $} (6b);
\end{tikzpicture}
$
    \caption{The graphs $\Delta(G_3)=B_3$ and $\Delta(G_4)=B_4$}
    \label{figBCon}
\end{figure}
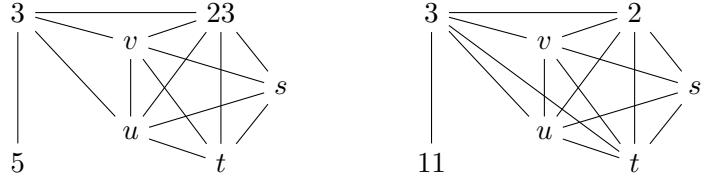

%%%%%%%%%%%%%%%%%%%%%%%%%%%%%%%%%%%%%%%%%
\subsection{Direct products}\label{secBDP}
%%%%%%%%%%%%%%%%%%%%%%%%%%%%%%%%%%%%%%%%%

There are ten graphs in Appendix \ref{AB} that are direct products of other graphs that are known to occur. These graphs are $B_{15}$, shown in Figure \ref{figB15}, and $B_i$ for $i\in\mathcal{I}$ where
$$
\mathcal{I}:=\{6,13,14,16,19,21,23,24,26\},
$$
seen in Figure \ref{figBDP}. 

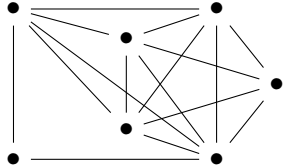
\begin{figure}[htb]
    \centering
$
\begin{tikzpicture}[scale=2]
\node (1) at (0,1) {$\bullet$};
\node (2) at (0,0) {$\bullet$};
\node (3) at (.75,.8) {$\bullet$};
\node (4) at (.75,.2) {$\bullet$};
\node (5) at (1.35,1) {$\bullet$};
\node (6) at (1.35,0) {$\bullet$};
\node (7) at (1.75,.5) {$\bullet$};
\path[font=\small,>=angle 90]
(1) edge node [right] {$ $} (2)
(3) edge node [right] {$ $} (4)
(3) edge node [right] {$ $} (5)
(3) edge node [right] {$ $} (6)
(3) edge node [right] {$ $} (7)
(4) edge node [right] {$ $} (5)
(4) edge node [right] {$ $} (6)
(4) edge node [right] {$ $} (7)
(5) edge node [right] {$ $} (6)
(5) edge node [right] {$ $} (7)
(6) edge node [right] {$ $} (7)
(1) edge node [right] {$ $} (3)
(1) edge node [right] {$ $} (4)
(1) edge node [right] {$ $} (5)
(1) edge node [right] {$ $} (6)
(2) edge node [right] {$ $} (6);
\end{tikzpicture}
$
    \caption{The graph $B_{15}$}
    \label{figB15}
\end{figure}

The graph $B_{15}$ deserves some special attention. It can be constructed as the direct product between the singleton $K_1$ and the diameter three graph with six vertices, which was constructed in \cite{L2} and classified in \cite{BLL}.

\begin{figure}[htb]
    \centering
$
\begin{tikzpicture}[scale=2]
\node (1a) at (0,1) {$\bullet$};
\node (2a) at (0,0) {$\bullet$};
\node (3a) at (.75,.5) {$\bullet$};
\node (4a) at (1.15,1) {$\bullet$};
\node (5a) at (1.15,0) {$\bullet$};
\node (6a) at (1.75,.8) {$\bullet$};
\node (7a) at (1.75,.2) {$\bullet$};
\path[font=\small,>=angle 90]
(1a) edge node [right] {$ $} (2a)
(3a) edge node [right] {$ $} (4a)
(3a) edge node [right] {$ $} (5a)
(3a) edge node [right] {$ $} (6a)
(3a) edge node [right] {$ $} (7a)
(4a) edge node [right] {$ $} (5a)
(4a) edge node [right] {$ $} (6a)
(4a) edge node [right] {$ $} (7a)
(5a) edge node [right] {$ $} (6a)
(5a) edge node [right] {$ $} (7a)
(6a) edge node [right] {$ $} (7a)
(1a) edge node [right] {$ $} (3a)
(2a) edge node [right] {$ $} (3a);
\node (1b) at (2.5,1) {$\bullet$};
\node (2b) at (2.5,0) {$\bullet$};
\node (3b) at (3.25,.8) {$\bullet$};
\node (4b) at (3.25,.2) {$\bullet$};
\node (5b) at (3.85,1) {$\bullet$};
\node (6b) at (3.85,0) {$\bullet$};
\node (7b) at (4.25,.5) {$\bullet$};
\path[font=\small,>=angle 90]
(1b) edge node [right] {$ $} (2b)
(3b) edge node [right] {$ $} (4b)
(3b) edge node [right] {$ $} (5b)
(3b) edge node [right] {$ $} (6b)
(3b) edge node [right] {$ $} (7b)
(4b) edge node [right] {$ $} (5b)
(4b) edge node [right] {$ $} (6b)
(4b) edge node [right] {$ $} (7b)
(5b) edge node [right] {$ $} (6b)
(5b) edge node [right] {$ $} (7b)
(6b) edge node [right] {$ $} (7b)
(1b) edge node [right] {$ $} (3b)
(1b) edge node [right] {$ $} (4b)
(2b) edge node [right] {$ $} (3b)
(2b) edge node [right] {$ $} (4b);
\node (1c) at (5,.8) {$\bullet$};
\node (2c) at (5,.2) {$\bullet$};
\node (3c) at (5.75,.8) {$\bullet$};
\node (4c) at (5.75,.2) {$\bullet$};
\node (5c) at (6.35,1) {$\bullet$};
\node (6c) at (6.35,0) {$\bullet$};
\node (7c) at (6.75,.5) {$\bullet$};
\path[font=\small,>=angle 90]
(1c) edge node [right] {$ $} (2c)
(3c) edge node [right] {$ $} (4c)
(3c) edge node [right] {$ $} (5c)
(3c) edge node [right] {$ $} (6c)
(3c) edge node [right] {$ $} (7c)
(4c) edge node [right] {$ $} (5c)
(4c) edge node [right] {$ $} (6c)
(4c) edge node [right] {$ $} (7c)
(5c) edge node [right] {$ $} (6c)
(5c) edge node [right] {$ $} (7c)
(6c) edge node [right] {$ $} (7c)
(1c) edge node [right] {$ $} (3c)
(1c) edge node [right] {$ $} (4c)
(1c) edge node [right] {$ $} (5c)
(1c) edge node [right] {$ $} (7c)
(2c) edge node [right] {$ $} (6c);
\node (1d) at (0,-.7) {$\bullet$};
\node (2d) at (0,-1.3) {$\bullet$};
\node (3d) at (.75,-.7) {$\bullet$};
\node (4d) at (.75,-1.3) {$\bullet$};
\node (5d) at (1.35,-.5) {$\bullet$};
\node (6d) at (1.35,-1.5) {$\bullet$};
\node (7d) at (1.75,-1) {$\bullet$};
\path[font=\small,>=angle 90]
(1d) edge node [right] {$ $} (2d)
(3d) edge node [right] {$ $} (4d)
(3d) edge node [right] {$ $} (5d)
(3d) edge node [right] {$ $} (6d)
(3d) edge node [right] {$ $} (7d)
(4d) edge node [right] {$ $} (5d)
(4d) edge node [right] {$ $} (6d)
(4d) edge node [right] {$ $} (7d)
(5d) edge node [right] {$ $} (6d)
(5d) edge node [right] {$ $} (7d)
(6d) edge node [right] {$ $} (7d)
(1d) edge node [right] {$ $} (3d)
(1d) edge node [right] {$ $} (5d)
(1d) edge node [right] {$ $} (7d)
(2d) edge node [right] {$ $} (4d)
(2d) edge node [right] {$ $} (6d);
\node (1e) at (2.5,-.7) {$\bullet$};
\node (2e) at (2.5,-1.3) {$\bullet$};
\node (3e) at (3.25,-.7) {$\bullet$};
\node (4e) at (3.25,-1.3) {$\bullet$};
\node (5e) at (3.85,-.5) {$\bullet$};
\node (6e) at (3.85,-1.5) {$\bullet$};
\node (7e) at (4.25,-1) {$\bullet$};
\path[font=\small,>=angle 90]
(1e) edge node [right] {$ $} (2e)
(3e) edge node [right] {$ $} (4e)
(3e) edge node [right] {$ $} (5e)
(3e) edge node [right] {$ $} (6e)
(3e) edge node [right] {$ $} (7e)
(4e) edge node [right] {$ $} (5e)
(4e) edge node [right] {$ $} (6e)
(4e) edge node [right] {$ $} (7e)
(5e) edge node [right] {$ $} (6e)
(5e) edge node [right] {$ $} (7e)
(6e) edge node [right] {$ $} (7e)
(1e) edge node [right] {$ $} (3e)
(1e) edge node [right] {$ $} (4e)
(1e) edge node [right] {$ $} (5e)
(1e) edge node [right] {$ $} (7e)
(2e) edge node [right] {$ $} (4e)
(2e) edge node [right] {$ $} (6e);
\node (1f) at (5,-.7) {$\bullet$};
\node (2f) at (5,-1.3) {$\bullet$};
\node (3f) at (6.75,-.7) {$\bullet$};
\node (4f) at (6.75,-1.3) {$\bullet$};
\node (5f) at (6.15,-.5) {$\bullet$};
\node (6f) at (6.15,-1.5) {$\bullet$};
\node (7f) at (5.75,-1) {$\bullet$};
\path[font=\small,>=angle 90]
(1f) edge node [right] {$ $} (2f)
(3f) edge node [right] {$ $} (4f)
(3f) edge node [right] {$ $} (5f)
(3f) edge node [right] {$ $} (6f)
(3f) edge node [right] {$ $} (7f)
(4f) edge node [right] {$ $} (5f)
(4f) edge node [right] {$ $} (6f)
(4f) edge node [right] {$ $} (7f)
(5f) edge node [right] {$ $} (6f)
(5f) edge node [right] {$ $} (7f)
(6f) edge node [right] {$ $} (7f)
(1f) edge node [right] {$ $} (3f)
(1f) edge node [right] {$ $} (5f)
(1f) edge node [right] {$ $} (7f)
(2f) edge node [right] {$ $} (4f)
(2f) edge node [right] {$ $} (6f)
(2f) edge node [right] {$ $} (7f);
\node (1g) at (0,-2.2) {$\bullet$};
\node (2g) at (0,-2.8) {$\bullet$};
\node (3g) at (.75,-2.5) {$\bullet$};
\node (4g) at (1.15,-2) {$\bullet$};
\node (5g) at (1.15,-3) {$\bullet$};
\node (6g) at (1.75,-2.2) {$\bullet$};
\node (7g) at (1.75,-2.8) {$\bullet$};
\path[font=\small,>=angle 90]
(1g) edge node [right] {$ $} (2g)
(3g) edge node [right] {$ $} (4g)
(3g) edge node [right] {$ $} (5g)
(3g) edge node [right] {$ $} (6g)
(3g) edge node [right] {$ $} (7g)
(4g) edge node [right] {$ $} (5g)
(4g) edge node [right] {$ $} (6g)
(4g) edge node [right] {$ $} (7g)
(5g) edge node [right] {$ $} (6g)
(5g) edge node [right] {$ $} (7g)
(6g) edge node [right] {$ $} (7g)
(1g) edge node [right] {$ $} (3g)
(1g) edge node [right] {$ $} (4g)
(1g) edge node [right] {$ $} (5g)
(2g) edge node [right] {$ $} (3g)
(2g) edge node [right] {$ $} (4g)
(2g) edge node [right] {$ $} (5g);
\node (1h) at (2.5,-2.2) {$\bullet$};
\node (2h) at (2.5,-2.8) {$\bullet$};
\node (3h) at (3.25,-2.2) {$\bullet$};
\node (4h) at (3.25,-2.8) {$\bullet$};
\node (5h) at (3.85,-2) {$\bullet$};
\node (6h) at (3.85,-3) {$\bullet$};
\node (7h) at (4.25,-2.5) {$\bullet$};
\path[font=\small,>=angle 90]
(1h) edge node [right] {$ $} (2h)
(3h) edge node [right] {$ $} (4h)
(3h) edge node [right] {$ $} (5h)
(3h) edge node [right] {$ $} (6h)
(3h) edge node [right] {$ $} (7h)
(4h) edge node [right] {$ $} (5h)
(4h) edge node [right] {$ $} (6h)
(4h) edge node [right] {$ $} (7h)
(5h) edge node [right] {$ $} (6h)
(5h) edge node [right] {$ $} (7h)
(6h) edge node [right] {$ $} (7h)
(1h) edge node [right] {$ $} (3h)
(1h) edge node [right] {$ $} (4h)
(1h) edge node [right] {$ $} (5h)
(1h) edge node [right] {$ $} (7h)
(2h) edge node [right] {$ $} (3h)
(2h) edge node [right] {$ $} (4h)
(2h) edge node [right] {$ $} (6h);
\node (1i) at (5,-2.2) {$\bullet$};
\node (2i) at (5,-2.8) {$\bullet$};
\node (3i) at (5.75,-2.2) {$\bullet$};
\node (4i) at (5.75,-2.8) {$\bullet$};
\node (5i) at (6.35,-2) {$\bullet$};
\node (6i) at (6.35,-3) {$\bullet$};
\node (7i) at (6.75,-2.5) {$\bullet$};
\path[font=\small,>=angle 90]
(1i) edge node [right] {$ $} (2i)
(3i) edge node [right] {$ $} (4i)
(3i) edge node [right] {$ $} (5i)
(3i) edge node [right] {$ $} (6i)
(3i) edge node [right] {$ $} (7i)
(4i) edge node [right] {$ $} (5i)
(4i) edge node [right] {$ $} (6i)
(4i) edge node [right] {$ $} (7i)
(5i) edge node [right] {$ $} (6i)
(5i) edge node [right] {$ $} (7i)
(6i) edge node [right] {$ $} (7i)
(1i) edge node [right] {$ $} (3i)
(1i) edge node [right] {$ $} (4i)
(1i) edge node [right] {$ $} (5i)
(1i) edge node [right] {$ $} (7i)
(2i) edge node [right] {$ $} (3i)
(2i) edge node [right] {$ $} (4i)
(2i) edge node [right] {$ $} (6i)
(2i) edge node [right] {$ $} (7i);
\end{tikzpicture}
$
    \caption{The graphs $B_i$ for $i\in\mathcal{I}$}
    \label{figBDP}
\end{figure}
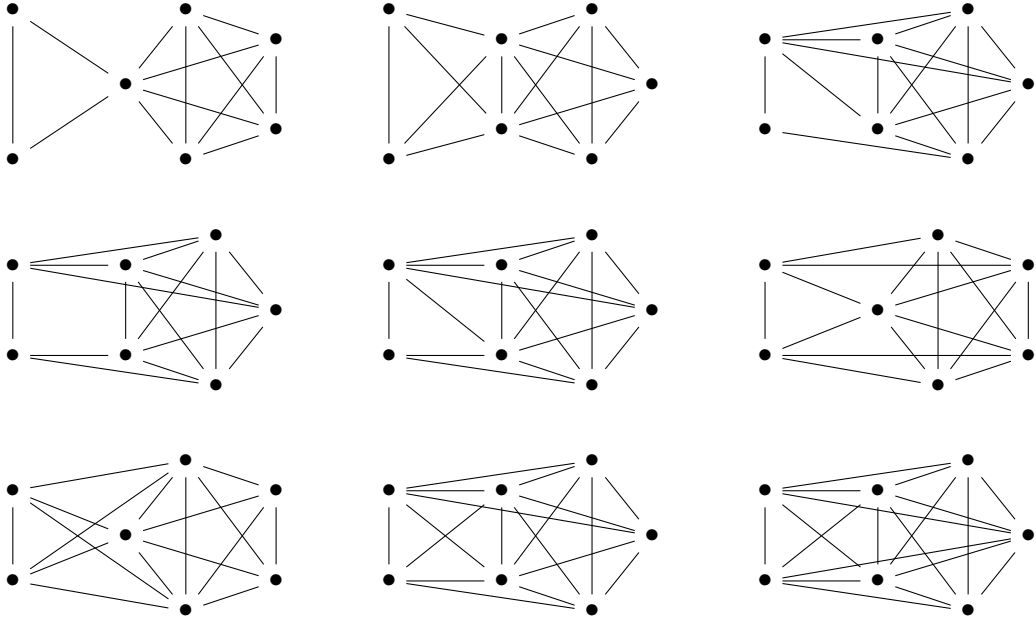

Next we'll consider the graphs in Figure \ref{figBDP}. The graph $B_6$ is again the direct product of the singleton $K_1$ and the disconnected graph with two complete components of sizes two and four (shown to occur in \cite{BLL}). As for $B_{14}$, it can be seen as the direct product between two disconnected graphs: one having complete components of sizes one and one, and the other having complete components of sizes one and four (shown to occur in \cite{L3}). Next, the graph $B_{16}$ is also the direct product between two disconnected graphs known to occur as $\Delta(G)$ for some solvable group $G$; this time it is between one having complete components of sizes one and two, whereas the other has complete components of sizes one and three.

The remaining six graphs in Figure \ref{figBDP} are the direct product between the singleton $K_1$ and one of the graphs shown to occur on page $503$ in \cite{BLL}.

%%%%%%%%%%%%%%%%%%%%%%%%%%%%%%%%%%%%%%%%%
\subsection{Eliminations}\label{secBElim}
%%%%%%%%%%%%%%%%%%%%%%%%%%%%%%%%%%%%%%%%%

In this section, we will eliminate the graphs in Figure \ref{figBDNO}. The first two graphs, $B_1$ and $B_2$, clearly have diameter three, and the last graph, $B_5$, has actually been previously studied.

For $B_1$, notice that it has two cut-vertices. This contradicts the main result from \cite{MQ} which requires prime character degree graphs to have at most one cut vertex. Hence, $B_1$ cannot be the prime character degree graph of any solvable group.

For $B_2$, notice that it has diameter three, as previously mentioned. In particular, following the labeling in Section \ref{secdiam3}, we see that $|\rho_3|=2<3$, which contradicts Proposition \ref{sassresult}\eqref{sass1}. Thus, $B_2$ is not $\Delta(G)$ for any finite solvable group $G$.

Finally, one can see that the graph $B_5$ is exactly the graph $\Gamma_{5,2}$ from \cite{BL}. Using Theorem \ref{KT}, we can conclude that $B_5=\Gamma_{5,2}$ does not occur as the prime character degree graph of any solvable group.

\begin{figure}[htb]
    \centering
$
\begin{tikzpicture}[scale=2]
\node (1a) at (0,1) {$\bullet$};
\node (2a) at (0,0) {$\bullet$};
\node (3a) at (.75,.8) {$\bullet$};
\node (4a) at (.75,.2) {$\bullet$};
\node (5a) at (1.35,1) {$\bullet$};
\node (6a) at (1.35,0) {$\bullet$};
\node (7a) at (1.75,.5) {$\bullet$};
\path[font=\small,>=angle 90]
(1a) edge node [right] {$ $} (2a)
(3a) edge node [right] {$ $} (4a)
(3a) edge node [right] {$ $} (5a)
(3a) edge node [right] {$ $} (6a)
(3a) edge node [right] {$ $} (7a)
(4a) edge node [right] {$ $} (5a)
(4a) edge node [right] {$ $} (6a)
(4a) edge node [right] {$ $} (7a)
(5a) edge node [right] {$ $} (6a)
(5a) edge node [right] {$ $} (7a)
(6a) edge node [right] {$ $} (7a)
(1a) edge node [right] {$ $} (5a);
\node (1b) at (2.5,1) {$\bullet$};
\node (2b) at (2.5,0) {$\bullet$};
\node (3b) at (3.25,.8) {$\bullet$};
\node (4b) at (3.25,.2) {$\bullet$};
\node (5b) at (3.85,1) {$\bullet$};
\node (6b) at (3.85,0) {$\bullet$};
\node (7b) at (4.25,.5) {$\bullet$};
\path[font=\small,>=angle 90]
(1b) edge node [right] {$ $} (2b)
(3b) edge node [right] {$ $} (4b)
(3b) edge node [right] {$ $} (5b)
(3b) edge node [right] {$ $} (6b)
(3b) edge node [right] {$ $} (7b)
(4b) edge node [right] {$ $} (5b)
(4b) edge node [right] {$ $} (6b)
(4b) edge node [right] {$ $} (7b)
(5b) edge node [right] {$ $} (6b)
(5b) edge node [right] {$ $} (7b)
(6b) edge node [right] {$ $} (7b)
(1b) edge node [right] {$ $} (5b)
(1b) edge node [right] {$ $} (3b);
\node (1c) at (5,1) {$\bullet$};
\node (2c) at (5,0) {$\bullet$};
\node (3c) at (5.75,.8) {$\bullet$};
\node (4c) at (5.75,.2) {$\bullet$};
\node (5c) at (6.35,1) {$\bullet$};
\node (6c) at (6.35,0) {$\bullet$};
\node (7c) at (6.75,.5) {$\bullet$};
\path[font=\small,>=angle 90]
(1c) edge node [right] {$ $} (2c)
(3c) edge node [right] {$ $} (4c)
(3c) edge node [right] {$ $} (5c)
(3c) edge node [right] {$ $} (6c)
(3c) edge node [right] {$ $} (7c)
(4c) edge node [right] {$ $} (5c)
(4c) edge node [right] {$ $} (6c)
(4c) edge node [right] {$ $} (7c)
(5c) edge node [right] {$ $} (6c)
(5c) edge node [right] {$ $} (7c)
(6c) edge node [right] {$ $} (7c)
(1c) edge node [right] {$ $} (5c)
(2c) edge node [right] {$ $} (6c);
\end{tikzpicture}
$
    \caption{The graphs $B_1$, $B_2$, and $B_5$}
    \label{figBDNO}
\end{figure}
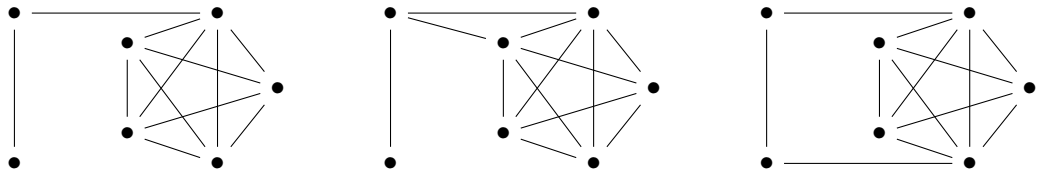

%%%%%%%%%%%%%%%%%%%%%%%%%%%%%%%%%%%%%%%%%
\subsection{Unclassified graphs}\label{secBMaybe}
%%%%%%%%%%%%%%%%%%%%%%%%%%%%%%%%%%%%%%%%%

There are eleven connected graphs that are currently unknown whose vertex sets are covered by cliques of two and five (see Figure \ref{figBM}). Setting
$$
\mathcal{J}:=\{7,8,9,10,11,12,17,18,20,22,25\},
$$
we have the graphs $B_j$ for all $j\in\mathcal{J}$.

\begin{figure}[htb]
    \centering
$
\begin{tikzpicture}[scale=2]
\node (1a) at (0,1) {$\bullet$};
\node (2a) at (0,0) {$\bullet$};
\node (3a) at (.75,.5) {$\bullet$};
\node (4a) at (1.15,1) {$\bullet$};
\node (5a) at (1.15,0) {$\bullet$};
\node (6a) at (1.75,.8) {$\bullet$};
\node (7a) at (1.75,.2) {$\bullet$};
\path[font=\small,>=angle 90]
(1a) edge node [right] {$ $} (2a)
(3a) edge node [right] {$ $} (4a)
(3a) edge node [right] {$ $} (5a)
(3a) edge node [right] {$ $} (6a)
(3a) edge node [right] {$ $} (7a)
(4a) edge node [right] {$ $} (5a)
(4a) edge node [right] {$ $} (6a)
(4a) edge node [right] {$ $} (7a)
(5a) edge node [right] {$ $} (6a)
(5a) edge node [right] {$ $} (7a)
(6a) edge node [right] {$ $} (7a)
(1a) edge node [right] {$ $} (3a)
(1a) edge node [right] {$ $} (4a)
(2a) edge node [right] {$ $} (5a);
\node (1b) at (2.5,1) {$\bullet$};
\node (2b) at (2.5,0) {$\bullet$};
\node (3b) at (3.25,.8) {$\bullet$};
\node (4b) at (3.25,.2) {$\bullet$};
\node (5b) at (3.85,1) {$\bullet$};
\node (6b) at (3.85,0) {$\bullet$};
\node (7b) at (4.25,.5) {$\bullet$};
\path[font=\small,>=angle 90]
(1b) edge node [right] {$ $} (2b)
(3b) edge node [right] {$ $} (4b)
(3b) edge node [right] {$ $} (5b)
(3b) edge node [right] {$ $} (6b)
(3b) edge node [right] {$ $} (7b)
(4b) edge node [right] {$ $} (5b)
(4b) edge node [right] {$ $} (6b)
(4b) edge node [right] {$ $} (7b)
(5b) edge node [right] {$ $} (6b)
(5b) edge node [right] {$ $} (7b)
(6b) edge node [right] {$ $} (7b)
(1b) edge node [right] {$ $} (3b)
(1b) edge node [right] {$ $} (4b)
(2b) edge node [right] {$ $} (4b);
\node (1c) at (5,1) {$\bullet$};
\node (2c) at (5,0) {$\bullet$};
\node (3c) at (5.75,.8) {$\bullet$};
\node (4c) at (5.75,.2) {$\bullet$};
\node (5c) at (6.35,1) {$\bullet$};
\node (6c) at (6.35,0) {$\bullet$};
\node (7c) at (6.75,.5) {$\bullet$};
\path[font=\small,>=angle 90]
(1c) edge node [right] {$ $} (2c)
(3c) edge node [right] {$ $} (4c)
(3c) edge node [right] {$ $} (5c)
(3c) edge node [right] {$ $} (6c)
(3c) edge node [right] {$ $} (7c)
(4c) edge node [right] {$ $} (5c)
(4c) edge node [right] {$ $} (6c)
(4c) edge node [right] {$ $} (7c)
(5c) edge node [right] {$ $} (6c)
(5c) edge node [right] {$ $} (7c)
(6c) edge node [right] {$ $} (7c)
(1c) edge node [right] {$ $} (5c)
(1c) edge node [right] {$ $} (3c)
(1c) edge node [right] {$ $} (4c)
(2c) edge node [right] {$ $} (6c);
\node (1d) at (0,-.5) {$\bullet$};
\node (2d) at (0,-1.5) {$\bullet$};
\node (3d) at (.75,-.7) {$\bullet$};
\node (4d) at (0.75,-1.3) {$\bullet$};
\node (5d) at (1.35,-.5) {$\bullet$};
\node (6d) at (1.35,-1.5) {$\bullet$};
\node (7d) at (1.75,-1) {$\bullet$};
\path[font=\small,>=angle 90]
(1d) edge node [right] {$ $} (2d)
(3d) edge node [right] {$ $} (4d)
(3d) edge node [right] {$ $} (5d)
(3d) edge node [right] {$ $} (6d)
(3d) edge node [right] {$ $} (7d)
(4d) edge node [right] {$ $} (5d)
(4d) edge node [right] {$ $} (6d)
(4d) edge node [right] {$ $} (7d)
(5d) edge node [right] {$ $} (6d)
(5d) edge node [right] {$ $} (7d)
(6d) edge node [right] {$ $} (7d)
(1d) edge node [right] {$ $} (3d)
(1d) edge node [right] {$ $} (4d)
(1d) edge node [right] {$ $} (5d)
(2d) edge node [right] {$ $} (4d);
\node (1e) at (2.5,-.5) {$\bullet$};
\node (2e) at (2.5,-1.5) {$\bullet$};
\node (3e) at (3.25,-.7) {$\bullet$};
\node (4e) at (3.25,-1.3) {$\bullet$};
\node (5e) at (3.85,-.5) {$\bullet$};
\node (6e) at (3.85,-1.5) {$\bullet$};
\node (7e) at (4.25,-1) {$\bullet$};
\path[font=\small,>=angle 90]
(1e) edge node [right] {$ $} (2e)
(3e) edge node [right] {$ $} (4e)
(3e) edge node [right] {$ $} (5e)
(3e) edge node [right] {$ $} (6e)
(3e) edge node [right] {$ $} (7e)
(4e) edge node [right] {$ $} (5e)
(4e) edge node [right] {$ $} (6e)
(4e) edge node [right] {$ $} (7e)
(5e) edge node [right] {$ $} (6e)
(5e) edge node [right] {$ $} (7e)
(6e) edge node [right] {$ $} (7e)
(1e) edge node [right] {$ $} (5e)
(1e) edge node [right] {$ $} (3e)
(2e) edge node [right] {$ $} (6e)
(2e) edge node [right] {$ $} (4e);
\node (1f) at (5,-.5) {$\bullet$};
\node (2f) at (5,-1.5) {$\bullet$};
\node (3f) at (5.75,-1) {$\bullet$};
\node (4f) at (6.15,-.5) {$\bullet$};
\node (5f) at (6.15,-1.5) {$\bullet$};
\node (6f) at (6.75,-.7) {$\bullet$};
\node (7f) at (6.75,-1.3) {$\bullet$};
\path[font=\small,>=angle 90]
(1f) edge node [right] {$ $} (2f)
(3f) edge node [right] {$ $} (4f)
(3f) edge node [right] {$ $} (5f)
(3f) edge node [right] {$ $} (6f)
(3f) edge node [right] {$ $} (7f)
(4f) edge node [right] {$ $} (5f)
(4f) edge node [right] {$ $} (6f)
(4f) edge node [right] {$ $} (7f)
(5f) edge node [right] {$ $} (6f)
(5f) edge node [right] {$ $} (7f)
(6f) edge node [right] {$ $} (7f)
(1f) edge node [right] {$ $} (3f)
(1f) edge node [right] {$ $} (4f)
(2f) edge node [right] {$ $} (3f)
(2f) edge node [right] {$ $} (5f);
\node (1g) at (0,-2) {$\bullet$};
\node (2g) at (0,-3) {$\bullet$};
\node (3g) at (.75,-2.2) {$\bullet$};
\node (4g) at (.75,-2.8) {$\bullet$};
\node (5g) at (1.35,-2) {$\bullet$};
\node (6g) at (1.35,-3) {$\bullet$};
\node (7g) at (1.75,-2.5) {$\bullet$};
\path[font=\small,>=angle 90]
(1g) edge node [right] {$ $} (2g)
(3g) edge node [right] {$ $} (4g)
(3g) edge node [right] {$ $} (5g)
(3g) edge node [right] {$ $} (6g)
(3g) edge node [right] {$ $} (7g)
(4g) edge node [right] {$ $} (5g)
(4g) edge node [right] {$ $} (6g)
(4g) edge node [right] {$ $} (7g)
(5g) edge node [right] {$ $} (6g)
(5g) edge node [right] {$ $} (7g)
(6g) edge node [right] {$ $} (7g)
(1g) edge node [right] {$ $} (3g)
(1g) edge node [right] {$ $} (4g)
(1g) edge node [right] {$ $} (5g)
(2g) edge node [right] {$ $} (4g)
(2g) edge node [right] {$ $} (6g);
\node (1h) at (2.5,-2) {$\bullet$};
\node (2h) at (2.5,-3) {$\bullet$};
\node (3h) at (3.25,-2.2) {$\bullet$};
\node (4h) at (3.25,-2.8) {$\bullet$};
\node (5h) at (3.85,-2) {$\bullet$};
\node (6h) at (3.85,-3) {$\bullet$};
\node (7h) at (4.25,-2.5) {$\bullet$};
\path[font=\small,>=angle 90]
(1h) edge node [right] {$ $} (2h)
(3h) edge node [right] {$ $} (4h)
(3h) edge node [right] {$ $} (5h)
(3h) edge node [right] {$ $} (6h)
(3h) edge node [right] {$ $} (7h)
(4h) edge node [right] {$ $} (5h)
(4h) edge node [right] {$ $} (6h)
(4h) edge node [right] {$ $} (7h)
(5h) edge node [right] {$ $} (6h)
(5h) edge node [right] {$ $} (7h)
(6h) edge node [right] {$ $} (7h)
(1h) edge node [right] {$ $} (3h)
(1h) edge node [right] {$ $} (4h)
(1h) edge node [right] {$ $} (5h)
(2h) edge node [right] {$ $} (3h)
(2h) edge node [right] {$ $} (4h);
\node (1i) at (5,-2) {$\bullet$};
\node (2i) at (5,-3) {$\bullet$};
\node (3i) at (5.75,-2.2) {$\bullet$};
\node (4i) at (5.75,-2.8) {$\bullet$};
\node (5i) at (6.35,-2) {$\bullet$};
\node (6i) at (6.35,-3) {$\bullet$};
\node (7i) at (6.75,-2.5) {$\bullet$};
\path[font=\small,>=angle 90]
(1i) edge node [right] {$ $} (2i)
(3i) edge node [right] {$ $} (4i)
(3i) edge node [right] {$ $} (5i)
(3i) edge node [right] {$ $} (6i)
(3i) edge node [right] {$ $} (7i)
(4i) edge node [right] {$ $} (5i)
(4i) edge node [right] {$ $} (6i)
(4i) edge node [right] {$ $} (7i)
(5i) edge node [right] {$ $} (6i)
(5i) edge node [right] {$ $} (7i)
(6i) edge node [right] {$ $} (7i)
(1i) edge node [right] {$ $} (3i)
(1i) edge node [right] {$ $} (4i)
(1i) edge node [right] {$ $} (5i)
(1i) edge node [right] {$ $} (6i)
(2i) edge node [right] {$ $} (4i)
(2i) edge node [right] {$ $} (6i);
\node (1j) at (1.25,-3.5) {$\bullet$};
\node (2j) at (1.25,-4.5) {$\bullet$};
\node (3j) at (2,-3.7) {$\bullet$};
\node (4j) at (2,-4.3) {$\bullet$};
\node (5j) at (2.6,-3.5) {$\bullet$};
\node (6j) at (2.6,-4.5) {$\bullet$};
\node (7j) at (3,-4) {$\bullet$};
\path[font=\small,>=angle 90]
(1j) edge node [right] {$ $} (2j)
(3j) edge node [right] {$ $} (4j)
(3j) edge node [right] {$ $} (5j)
(3j) edge node [right] {$ $} (6j)
(3j) edge node [right] {$ $} (7j)
(4j) edge node [right] {$ $} (5j)
(4j) edge node [right] {$ $} (6j)
(4j) edge node [right] {$ $} (7j)
(5j) edge node [right] {$ $} (6j)
(5j) edge node [right] {$ $} (7j)
(6j) edge node [right] {$ $} (7j)
(1j) edge node [right] {$ $} (3j)
(1j) edge node [right] {$ $} (4j)
(1j) edge node [right] {$ $} (5j)
(2j) edge node [right] {$ $} (3j)
(2j) edge node [right] {$ $} (4j)
(2j) edge node [right] {$ $} (6j);
\node (1k) at (3.75,-3.5) {$\bullet$};
\node (2k) at (3.75,-4.5) {$\bullet$};
\node (3k) at (4.5,-3.7) {$\bullet$};
\node (4k) at (4.5,-4.3) {$\bullet$};
\node (5k) at (5.1,-3.5) {$\bullet$};
\node (6k) at (5.1,-4.5) {$\bullet$};
\node (7k) at (5.5,-4) {$\bullet$};
\path[font=\small,>=angle 90]
(1k) edge node [right] {$ $} (2k)
(3k) edge node [right] {$ $} (4k)
(3k) edge node [right] {$ $} (5k)
(3k) edge node [right] {$ $} (6k)
(3k) edge node [right] {$ $} (7k)
(4k) edge node [right] {$ $} (5k)
(4k) edge node [right] {$ $} (6k)
(4k) edge node [right] {$ $} (7k)
(5k) edge node [right] {$ $} (6k)
(5k) edge node [right] {$ $} (7k)
(6k) edge node [right] {$ $} (7k)
(1k) edge node [right] {$ $} (3k)
(1k) edge node [right] {$ $} (4k)
(1k) edge node [right] {$ $} (5k)
(1k) edge node [right] {$ $} (6k)
(2k) edge node [right] {$ $} (3k)
(2k) edge node [right] {$ $} (4k)
(2k) edge node [right] {$ $} (6k);
\end{tikzpicture}
$
    \caption{The graphs $B_j$ for $j\in\mathcal{J}$}
    \label{figBM}
\end{figure}
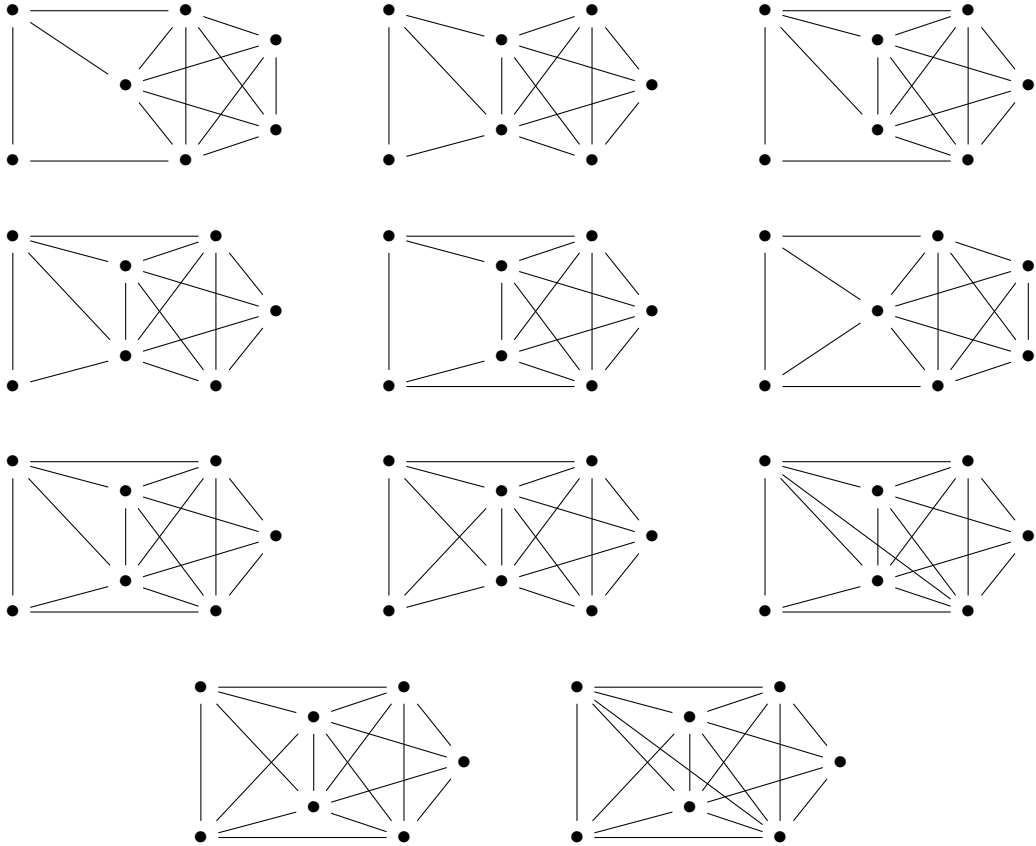

Part of the main reason that these graphs went unclassified is that they rely upon subgraphs with six vertices that are currently unclassified from the work done in \cite{BLL}. The graph $B_7$ is an example of a graph whose classification has been halted by this issue.

The next main issue we faced with these graphs in Figure \ref{figBM} is that they have the disconnected graph with component sizes of two and five that has been shown to occur (see Section \ref{secD2}). This issue can be overcome, but our current argument to eliminate graphs forbids the possibility of a connected subgraph occurring. For example, the graph $B_8$ has $B_6$ as a subgraph, and $B_6$ was shown to occur in Section \ref{secBDP}. We hit a similar issue with the graph $B_9$, for instance, because it has $B_3$ as a subgraph, shown to occur in Section \ref{secBB3}.

%%%%%%%%%%%%%%%%%%%%%%%%%%%%%%%%%%%%%%%%%%%%%%%%%%%%%%%%%%%%%%%%%%%%%%%%%%%%%%%%%%
\section{Cliques of Three and Four}\label{sec3and4}
%%%%%%%%%%%%%%%%%%%%%%%%%%%%%%%%%%%%%%%%%%%%%%%%%%%%%%%%%%%%%%%%%%%%%%%%%%%%%%%%%%

There are fifty-three connected graphs whose vertex sets are covered by cliques of three and four (see Appendix \ref{AC}), of which we classify twenty. Of these, we conclude that four of them do indeed occur, whereas sixteen of them do not. The remaining thirty-three are unknown and require further discussion and investigation, along with (presumably) new methods.

Therefore, the goal of this section is to show that all the graphs in Figure \ref{figCDP} occur, and that all the graphs in Figures \ref{figC19} and \ref{figCM} possibly occur. Otherwise, as verified below in detail, the graphs in Figures \ref{figCDNO}, \ref{figCDT}, \ref{figC18}, and \ref{figC20} do not occur as the prime character degree graph of any solvable group.

%%%%%%%%%%%%%%%%%%%%%%%%%%%%%%%%%%%%%%%%%
\subsection{Direct products}\label{secCDP}
%%%%%%%%%%%%%%%%%%%%%%%%%%%%%%%%%%%%%%%%%

There are four graphs in Appendix \ref{AC} that can be constructed as direct products of other prime character degree graphs known to occur. These graphs, shown in Figure \ref{figCDP}, are $C_{26}$, $C_{50}$, $C_{51}$, and $C_{53}$.

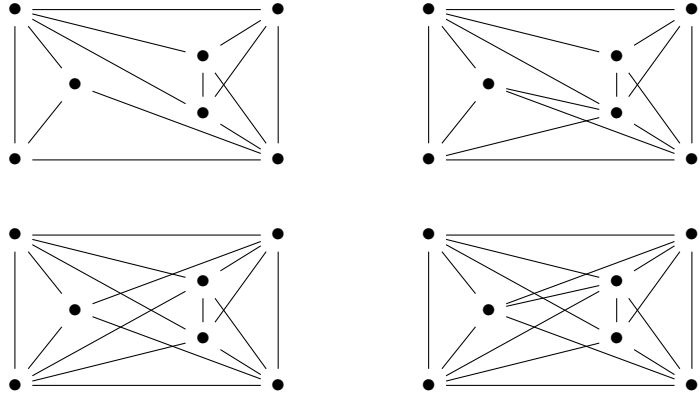
\begin{figure}[htb]
    \centering
$
\begin{tikzpicture}[scale=2]
\node (1a) at (0,1) {$\bullet$};
\node (2a) at (0,0) {$\bullet$};
\node (3a) at (.4,.5) {$\bullet$};
\node (4a) at (1.25,.69) {$\bullet$};
\node (5a) at (1.25,.31) {$\bullet$};
\node (6a) at (1.75,1) {$\bullet$};
\node (7a) at (1.75,0) {$\bullet$};
\path[font=\small,>=angle 90]
(1a) edge node [right] {$ $} (2a)
(1a) edge node [right] {$ $} (3a)
(2a) edge node [right] {$ $} (3a)
(4a) edge node [right] {$ $} (5a)
(4a) edge node [right] {$ $} (6a)
(4a) edge node [right] {$ $} (7a)
(5a) edge node [right] {$ $} (6a)
(5a) edge node [right] {$ $} (7a)
(6a) edge node [right] {$ $} (7a)
(1a) edge node [right] {$ $} (4a)
(1a) edge node [right] {$ $} (5a)
(1a) edge node [right] {$ $} (6a)
(2a) edge node [right] {$ $} (7a)
(3a) edge node [right] {$ $} (7a);
\node (1b) at (2.75,1) {$\bullet$};
\node (2b) at (2.75,0) {$\bullet$};
\node (3b) at (3.15,.5) {$\bullet$};
\node (4b) at (4,.69) {$\bullet$};
\node (5b) at (4,.31) {$\bullet$};
\node (6b) at (4.5,1) {$\bullet$};
\node (7b) at (4.5,0) {$\bullet$};
\path[font=\small,>=angle 90]
(1b) edge node [right] {$ $} (2b)
(1b) edge node [right] {$ $} (3b)
(2b) edge node [right] {$ $} (3b)
(4b) edge node [right] {$ $} (5b)
(4b) edge node [right] {$ $} (6b)
(4b) edge node [right] {$ $} (7b)
(5b) edge node [right] {$ $} (6b)
(5b) edge node [right] {$ $} (7b)
(6b) edge node [right] {$ $} (7b)
(1b) edge node [right] {$ $} (4b)
(1b) edge node [right] {$ $} (5b)
(1b) edge node [right] {$ $} (6b)
(2b) edge node [right] {$ $} (5b)
(2b) edge node [right] {$ $} (7b)
(3b) edge node [right] {$ $} (5b)
(3b) edge node [right] {$ $} (7b);
\node (1c) at (0,-.5) {$\bullet$};
\node (2c) at (0,-1.5) {$\bullet$};
\node (3c) at (.4,-1) {$\bullet$};
\node (4c) at (1.25,-.81) {$\bullet$};
\node (5c) at (1.25,-1.19) {$\bullet$};
\node (6c) at (1.75,-.5) {$\bullet$};
\node (7c) at (1.75,-1.5) {$\bullet$};
\path[font=\small,>=angle 90]
(1c) edge node [right] {$ $} (2c)
(1c) edge node [right] {$ $} (3c)
(2c) edge node [right] {$ $} (3c)
(4c) edge node [right] {$ $} (5c)
(4c) edge node [right] {$ $} (6c)
(4c) edge node [right] {$ $} (7c)
(5c) edge node [right] {$ $} (6c)
(5c) edge node [right] {$ $} (7c)
(6c) edge node [right] {$ $} (7c)
(1c) edge node [right] {$ $} (4c)
(1c) edge node [right] {$ $} (5c)
(1c) edge node [right] {$ $} (6c)
(2c) edge node [right] {$ $} (4c)
(2c) edge node [right] {$ $} (5c)
(2c) edge node [right] {$ $} (7c)
(3c) edge node [right] {$ $} (6c)
(3c) edge node [right] {$ $} (7c);
\node (1d) at (2.75,-.5) {$\bullet$};
\node (2d) at (2.75,-1.5) {$\bullet$};
\node (3d) at (3.15,-1) {$\bullet$};
\node (4d) at (4,-.81) {$\bullet$};
\node (5d) at (4,-1.19) {$\bullet$};
\node (6d) at (4.5,-.5) {$\bullet$};
\node (7d) at (4.5,-1.5) {$\bullet$};
\path[font=\small,>=angle 90]
(1d) edge node [right] {$ $} (2d)
(1d) edge node [right] {$ $} (3d)
(2d) edge node [right] {$ $} (3d)
(4d) edge node [right] {$ $} (5d)
(4d) edge node [right] {$ $} (6d)
(4d) edge node [right] {$ $} (7d)
(5d) edge node [right] {$ $} (6d)
(5d) edge node [right] {$ $} (7d)
(6d) edge node [right] {$ $} (7d)
(1d) edge node [right] {$ $} (4d)
(1d) edge node [right] {$ $} (5d)
(1d) edge node [right] {$ $} (6d)
(2d) edge node [right] {$ $} (4d)
(2d) edge node [right] {$ $} (5d)
(2d) edge node [right] {$ $} (7d)
(3d) edge node [right] {$ $} (4d)
(3d) edge node [right] {$ $} (6d)
(3d) edge node [right] {$ $} (7d);
\end{tikzpicture}
$
    \caption{The graphs $C_{26}$, $C_{50}$, $C_{51}$, and $C_{53}$}
    \label{figCDP}
\end{figure}

For the interested reader, one can see that $C_{26}$ is the direct product between the disconnected graph where one component is $K_2$ and the other $K_3$ and the disconnected graph with two total vertices. Next, the graph $C_{50}$ can be realized as the direct product between the bowtie graph constructed in \cite{L3} and the disconnected graph with two total vertices. The graph $C_{51}$ is again the direct product of the disconnected graph with two total vertices and another graph which occurs from \cite{L3}. Finally, the graph $C_{53}$ is the direct product of the regular graph with six vertices where each vertex is of degree four (see \cite{BLL}, \cite{SST}, or \cite{Z2}) and the singleton. Hence, the graphs in Figure \ref{figCDP} occur as $\Delta(G)$ for some finite solvable group $G$.

%%%%%%%%%%%%%%%%%%%%%%%%%%%%%%%%%%%%%%%%%
\subsection{Previously classified graphs}\label{secCPC}
%%%%%%%%%%%%%%%%%%%%%%%%%%%%%%%%%%%%%%%%%

In this section, we eliminate the graphs in Figure \ref{figCDNO}. Both of these graphs have been previously studied.

\begin{figure}[htb]
    \centering
$
\begin{tikzpicture}[scale=2]
\node (1a) at (0,.5) {$\bullet$};
\node (2a) at (.5,1) {$\bullet$};
\node (3a) at (.5,0) {$\bullet$};
\node (4a) at (1.25,.75) {$\bullet$};
\node (5a) at (1.25,.25) {$\bullet$};
\node (6a) at (1.75,1) {$\bullet$};
\node (7a) at (1.75,0) {$\bullet$};
\path[font=\small,>=angle 90]
(1a) edge node [right] {$ $} (2a)
(1a) edge node [right] {$ $} (3a)
(2a) edge node [right] {$ $} (3a)
(4a) edge node [right] {$ $} (5a)
(4a) edge node [right] {$ $} (6a)
(4a) edge node [right] {$ $} (7a)
(5a) edge node [right] {$ $} (6a)
(5a) edge node [right] {$ $} (7a)
(6a) edge node [right] {$ $} (7a)
(2a) edge node [right] {$ $} (4a)
(2a) edge node [right] {$ $} (6a)
(3a) edge node [right] {$ $} (5a)
(3a) edge node [right] {$ $} (7a);
\node (1b) at (2.75,1) {$\bullet$};
\node (2b) at (2.75,0) {$\bullet$};
\node (3b) at (3.25,.5) {$\bullet$};
\node (4b) at (3.75,.5) {$\bullet$};
\node (5b) at (4.125,1) {$\bullet$};
\node (6b) at (4.125,0) {$\bullet$};
\node (7b) at (4.5,.5) {$\bullet$};
\path[font=\small,>=angle 90]
(1b) edge node [right] {$ $} (2b)
(1b) edge node [right] {$ $} (3b)
(2b) edge node [right] {$ $} (3b)
(4b) edge node [right] {$ $} (5b)
(4b) edge node [right] {$ $} (6b)
(4b) edge node [right] {$ $} (7b)
(5b) edge node [right] {$ $} (6b)
(5b) edge node [right] {$ $} (7b)
(6b) edge node [right] {$ $} (7b)
(1b) edge node [right] {$ $} (5b)
(2b) edge node [right] {$ $} (6b)
(3b) edge node [right] {$ $} (4b);
\end{tikzpicture}
$
    \caption{The graphs $C_{10}$ and $C_{17}$}
    \label{figCDNO}
\end{figure}
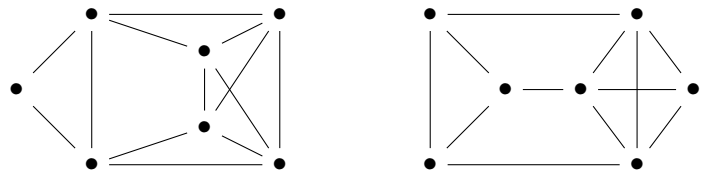

The first graph, $C_{10}$, was studied in \cite{BLL}. It was also classified in a family that was studied in \cite{LM}, and in that context was denoted by $\Sigma_{2,2}^L$. Following either result, we can conclude that $\Sigma_{2,2}^L=C_{10}$ does not occur as $\Delta(G)$ for any solvable group $G$.

Furthermore, the second graph, $C_{17}$, is exactly the graph $\Gamma_{4,3}$, which was classified in \cite{BL}. In that paper, the authors concluded that $\Gamma_{4,3}=C_{17}$ also does not occur as the prime character degree graph of any solvable group (see Theorem \ref{KT}).

%%%%%%%%%%%%%%%%%%%%%%%%%%%%%%%%%%%%%%%%%
\subsection{Diameter three graphs}\label{secCDT}
%%%%%%%%%%%%%%%%%%%%%%%%%%%%%%%%%%%%%%%%%

There are twelve graphs that are covered by cliques of three and four that have diameter three. Specifically, the graph $C_i$ has diameter three for all $i\in\{1,2,3,4,5,6,7,9,11,12,14,16\}$. For ease of notation, define
$$
\mathcal{I}:=\{1,2,3,4,5,6,7,9,11,12,14,16\}.
$$
One can see Figure \ref{figCDT} for the complete list. We claim that $C_i$ does not occur as the prime character degree graph for any solvable group for all $i\in\mathcal{I}$.

\begin{figure}[htb]
    \centering
$
\begin{tikzpicture}[scale=2]
\node (1a) at (0,1) {$\bullet$};
\node (2a) at (0,0) {$\bullet$};
\node (3a) at (.5,.5) {$\bullet$};
\node (4a) at (1,.5) {$\bullet$};
\node (5a) at (1.375,1) {$\bullet$};
\node (6a) at (1.375,0) {$\bullet$};
\node (7a) at (1.75,.5) {$\bullet$};
\path[font=\small,>=angle 90]
(1a) edge node [right] {$ $} (2a)
(1a) edge node [right] {$ $} (3a)
(2a) edge node [right] {$ $} (3a)
(4a) edge node [right] {$ $} (5a)
(4a) edge node [right] {$ $} (6a)
(4a) edge node [right] {$ $} (7a)
(5a) edge node [right] {$ $} (6a)
(5a) edge node [right] {$ $} (7a)
(6a) edge node [right] {$ $} (7a)
(3a) edge node [right] {$ $} (4a);
\node (1b) at (2.5,1) {$\bullet$};
\node (2b) at (2.5,0) {$\bullet$};
\node (3b) at (3,.5) {$\bullet$};
\node (4b) at (3.75,.69) {$\bullet$};
\node (5b) at (3.75,.31) {$\bullet$};
\node (6b) at (4.25,1) {$\bullet$};
\node (7b) at (4.25,0) {$\bullet$};
\path[font=\small,>=angle 90]
(1b) edge node [right] {$ $} (2b)
(1b) edge node [right] {$ $} (3b)
(2b) edge node [right] {$ $} (3b)
(4b) edge node [right] {$ $} (5b)
(4b) edge node [right] {$ $} (6b)
(4b) edge node [right] {$ $} (7b)
(5b) edge node [right] {$ $} (6b)
(5b) edge node [right] {$ $} (7b)
(6b) edge node [right] {$ $} (7b)
(3b) edge node [right] {$ $} (4b)
(3b) edge node [right] {$ $} (5b);
\node (1c) at (5,1) {$\bullet$};
\node (2c) at (5,0) {$\bullet$};
\node (3c) at (5.5,.5) {$\bullet$};
\node (4c) at (6,.5) {$\bullet$};
\node (5c) at (6.375,1) {$\bullet$};
\node (6c) at (6.375,0) {$\bullet$};
\node (7c) at (6.75,.5) {$\bullet$};
\path[font=\small,>=angle 90]
(1c) edge node [right] {$ $} (2c)
(1c) edge node [right] {$ $} (3c)
(2c) edge node [right] {$ $} (3c)
(4c) edge node [right] {$ $} (5c)
(4c) edge node [right] {$ $} (6c)
(4c) edge node [right] {$ $} (7c)
(5c) edge node [right] {$ $} (6c)
(5c) edge node [right] {$ $} (7c)
(6c) edge node [right] {$ $} (7c)
(3c) edge node [right] {$ $} (4c)
(3c) edge node [right] {$ $} (5c)
(3c) edge node [right] {$ $} (6c);
\node (1d) at (0,-1) {$\bullet$};
\node (2d) at (.5,-.5) {$\bullet$};
\node (3d) at (.5,-1.5) {$\bullet$};
\node (4d) at (1.25,-.81) {$\bullet$};
\node (5d) at (1.25,-1.29) {$\bullet$};
\node (6d) at (1.75,-.5) {$\bullet$};
\node (7d) at (1.75,-1.5) {$\bullet$};
\path[font=\small,>=angle 90]
(1d) edge node [right] {$ $} (2d)
(1d) edge node [right] {$ $} (3d)
(2d) edge node [right] {$ $} (3d)
(4d) edge node [right] {$ $} (5d)
(4d) edge node [right] {$ $} (6d)
(4d) edge node [right] {$ $} (7d)
(5d) edge node [right] {$ $} (6d)
(5d) edge node [right] {$ $} (7d)
(6d) edge node [right] {$ $} (7d)
(2d) edge node [right] {$ $} (4d)
(3d) edge node [right] {$ $} (5d);
\node (1e) at (2.5,-1) {$\bullet$};
\node (2e) at (3,-.5) {$\bullet$};
\node (3e) at (3,-1.5) {$\bullet$};
\node (4e) at (3.5,-1) {$\bullet$};
\node (5e) at (3.875,-.5) {$\bullet$};
\node (6e) at (3.875,-1.5) {$\bullet$};
\node (7e) at (4.25,-1) {$\bullet$};
\path[font=\small,>=angle 90]
(1e) edge node [right] {$ $} (2e)
(1e) edge node [right] {$ $} (3e)
(2e) edge node [right] {$ $} (3e)
(4e) edge node [right] {$ $} (5e)
(4e) edge node [right] {$ $} (6e)
(4e) edge node [right] {$ $} (7e)
(5e) edge node [right] {$ $} (6e)
(5e) edge node [right] {$ $} (7e)
(6e) edge node [right] {$ $} (7e)
(2e) edge node [right] {$ $} (4e)
(3e) edge node [right] {$ $} (4e);
\node (1f) at (5,-1) {$\bullet$};
\node (2f) at (5.5,-.5) {$\bullet$};
\node (3f) at (5.5,-1.5) {$\bullet$};
\node (4f) at (6,-1) {$\bullet$};
\node (5f) at (6.375,-.5) {$\bullet$};
\node (6f) at (6.375,-1.5) {$\bullet$};
\node (7f) at (6.75,-1) {$\bullet$};
\path[font=\small,>=angle 90]
(1f) edge node [right] {$ $} (2f)
(1f) edge node [right] {$ $} (3f)
(2f) edge node [right] {$ $} (3f)
(4f) edge node [right] {$ $} (5f)
(4f) edge node [right] {$ $} (6f)
(4f) edge node [right] {$ $} (7f)
(5f) edge node [right] {$ $} (6f)
(5f) edge node [right] {$ $} (7f)
(6f) edge node [right] {$ $} (7f)
(2f) edge node [right] {$ $} (4f)
(2f) edge node [right] {$ $} (5f)
(3f) edge node [right] {$ $} (6f);
\node (1g) at (0,-2.5) {$\bullet$};
\node (2g) at (.5,-2) {$\bullet$};
\node (3g) at (.5,-3) {$\bullet$};
\node (4g) at (1.25,-2.31) {$\bullet$};
\node (5g) at (1.25,-2.69) {$\bullet$};
\node (6g) at (1.75,-2) {$\bullet$};
\node (7g) at (1.75,-3) {$\bullet$};
\path[font=\small,>=angle 90]
(1g) edge node [right] {$ $} (2g)
(1g) edge node [right] {$ $} (3g)
(2g) edge node [right] {$ $} (3g)
(4g) edge node [right] {$ $} (5g)
(4g) edge node [right] {$ $} (6g)
(4g) edge node [right] {$ $} (7g)
(5g) edge node [right] {$ $} (6g)
(5g) edge node [right] {$ $} (7g)
(6g) edge node [right] {$ $} (7g)
(2g) edge node [right] {$ $} (4g)
(2g) edge node [right] {$ $} (5g)
(3g) edge node [right] {$ $} (5g);
\node (1h) at (2.5,-2.5) {$\bullet$};
\node (2h) at (3,-2) {$\bullet$};
\node (3h) at (3,-3) {$\bullet$};
\node (4h) at (3.75,-2.31) {$\bullet$};
\node (5h) at (3.75,-2.69) {$\bullet$};
\node (6h) at (4.25,-2) {$\bullet$};
\node (7h) at (4.25,-3) {$\bullet$};
\path[font=\small,>=angle 90]
(1h) edge node [right] {$ $} (2h)
(1h) edge node [right] {$ $} (3h)
(2h) edge node [right] {$ $} (3h)
(4h) edge node [right] {$ $} (5h)
(4h) edge node [right] {$ $} (6h)
(4h) edge node [right] {$ $} (7h)
(5h) edge node [right] {$ $} (6h)
(5h) edge node [right] {$ $} (7h)
(6h) edge node [right] {$ $} (7h)
(2h) edge node [right] {$ $} (4h)
(2h) edge node [right] {$ $} (5h)
(2h) edge node [right] {$ $} (6h)
(3h) edge node [right] {$ $} (5h);
\node (1i) at (5,-2.5) {$\bullet$};
\node (2i) at (5.5,-2) {$\bullet$};
\node (3i) at (5.5,-3) {$\bullet$};
\node (4i) at (6,-2.5) {$\bullet$};
\node (5i) at (6.375,-2) {$\bullet$};
\node (6i) at (6.375,-3) {$\bullet$};
\node (7i) at (6.75,-2.5) {$\bullet$};
\path[font=\small,>=angle 90]
(1i) edge node [right] {$ $} (2i)
(1i) edge node [right] {$ $} (3i)
(2i) edge node [right] {$ $} (3i)
(4i) edge node [right] {$ $} (5i)
(4i) edge node [right] {$ $} (6i)
(4i) edge node [right] {$ $} (7i)
(5i) edge node [right] {$ $} (6i)
(5i) edge node [right] {$ $} (7i)
(6i) edge node [right] {$ $} (7i)
(2i) edge node [right] {$ $} (4i)
(2i) edge node [right] {$ $} (5i)
(3i) edge node [right] {$ $} (4i)
(3i) edge node [right] {$ $} (6i);
\node (1j) at (0,-4) {$\bullet$};
\node (2j) at (.5,-3.5) {$\bullet$};
\node (3j) at (.5,-4.5) {$\bullet$};
\node (4j) at (1.25,-3.81) {$\bullet$};
\node (5j) at (1.25,-4.19) {$\bullet$};
\node (6j) at (1.75,-3.5) {$\bullet$};
\node (7j) at (1.75,-4.5) {$\bullet$};
\path[font=\small,>=angle 90]
(1j) edge node [right] {$ $} (2j)
(1j) edge node [right] {$ $} (3j)
(2j) edge node [right] {$ $} (3j)
(4j) edge node [right] {$ $} (5j)
(4j) edge node [right] {$ $} (6j)
(4j) edge node [right] {$ $} (7j)
(5j) edge node [right] {$ $} (6j)
(5j) edge node [right] {$ $} (7j)
(6j) edge node [right] {$ $} (7j)
(2j) edge node [right] {$ $} (4j)
(2j) edge node [right] {$ $} (5j)
(3j) edge node [right] {$ $} (4j)
(3j) edge node [right] {$ $} (5j);
\node (1k) at (2.5,-4) {$\bullet$};
\node (2k) at (3,-3.5) {$\bullet$};
\node (3k) at (3,-4.5) {$\bullet$};
\node (4k) at (3.75,-3.81) {$\bullet$};
\node (5k) at (3.75,-4.19) {$\bullet$};
\node (6k) at (4.25,-3.5) {$\bullet$};
\node (7k) at (4.25,-4.5) {$\bullet$};
\path[font=\small,>=angle 90]
(1k) edge node [right] {$ $} (2k)
(1k) edge node [right] {$ $} (3k)
(2k) edge node [right] {$ $} (3k)
(4k) edge node [right] {$ $} (5k)
(4k) edge node [right] {$ $} (6k)
(4k) edge node [right] {$ $} (7k)
(5k) edge node [right] {$ $} (6k)
(5k) edge node [right] {$ $} (7k)
(6k) edge node [right] {$ $} (7k)
(2k) edge node [right] {$ $} (4k)
(2k) edge node [right] {$ $} (5k)
(2k) edge node [right] {$ $} (6k)
(3k) edge node [right] {$ $} (4k)
(3k) edge node [right] {$ $} (5k);
\node (1l) at (5,-4) {$\bullet$};
\node (2l) at (5.375,-3.5) {$\bullet$};
\node (3l) at (5.375,-4.5) {$\bullet$};
\node (4l) at (6.175,-4) {$\bullet$};
\node (5l) at (6.375,-3.5) {$\bullet$};
\node (6l) at (6.375,-4.5) {$\bullet$};
\node (7l) at (6.75,-4) {$\bullet$};
\path[font=\small,>=angle 90]
(1l) edge node [right] {$ $} (2l)
(1l) edge node [right] {$ $} (3l)
(2l) edge node [right] {$ $} (3l)
(4l) edge node [right] {$ $} (5l)
(4l) edge node [right] {$ $} (6l)
(4l) edge node [right] {$ $} (7l)
(5l) edge node [right] {$ $} (6l)
(5l) edge node [right] {$ $} (7l)
(6l) edge node [right] {$ $} (7l)
(2l) edge node [right] {$ $} (4l)
(2l) edge node [right] {$ $} (5l)
(2l) edge node [right] {$ $} (6l)
(3l) edge node [right] {$ $} (4l)
(3l) edge node [right] {$ $} (5l)
(3l) edge node [right] {$ $} (6l);
\end{tikzpicture}
$
    \caption{The graphs $C_i$ for $i\in\mathcal{I}$}
    \label{figCDT}
\end{figure}
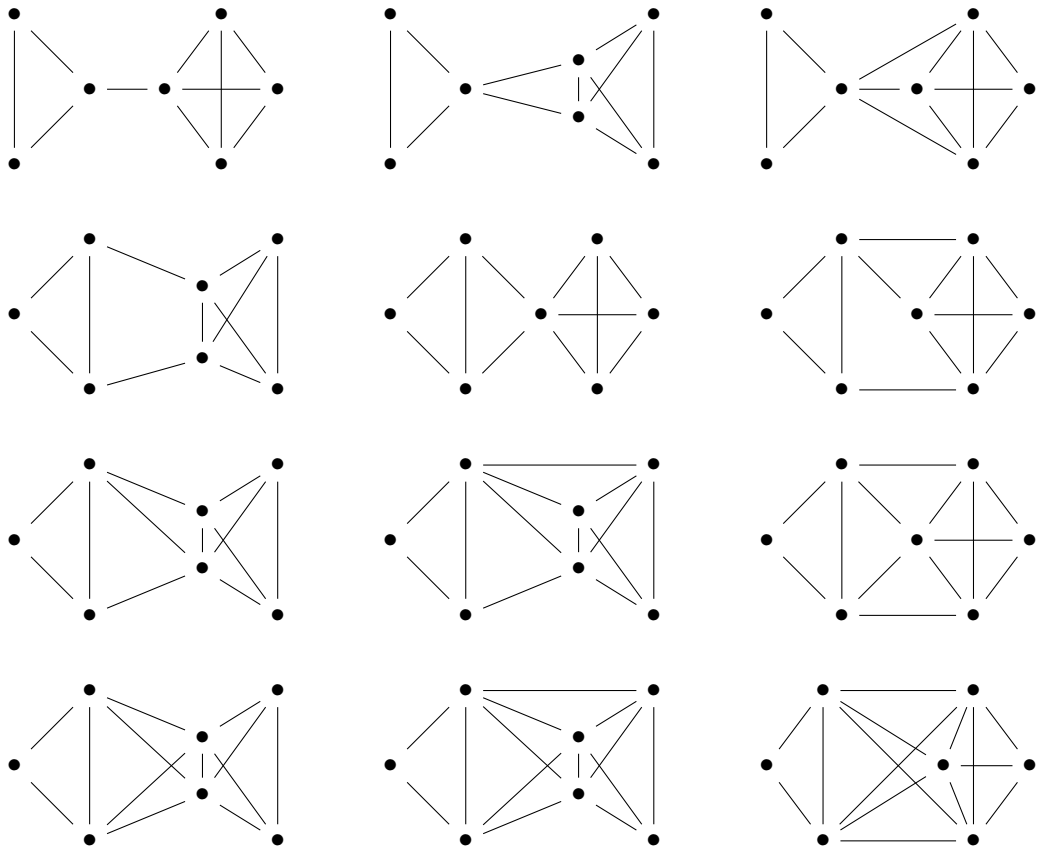

One can use the main result in \cite{MQ} to eliminate the graph $C_1$, because that graph has two cut-vertices. However, a simple method to eliminate $C_i$ for all $i\in\mathcal{I}$ is to employ the results in \cite{S} (see Proposition \ref{sassresult}\eqref{sass3}). Using the prescribed notation set in Section \ref{secdiam3}, one can see for each $i\in\mathcal{I}$ we have for $C_i$ that $|\rho_1\cup\rho_2|=3$ and $|\rho_3\cup\rho_4|=4$, yet
$$
4=|\rho_3\cup\rho_4|\geq2^{|\rho_1\cup\rho_2|}=2^3=8,
$$
a contradiction.

%%%%%%%%%%%%%%%%%%%%%%%%%%%%%%%%%%%%%%%%%
\subsection{Eliminating the graph $C_{18}$}\label{secC18}
%%%%%%%%%%%%%%%%%%%%%%%%%%%%%%%%%%%%%%%%%

Our goal in this section is to eliminate the graph $C_{18}$ (see Figure \ref{figC18}). In other words, we will conclude that $C_{18}$ does not occur as the prime character degree graph for any solvable group. We follow a similar strategy that is employed in \cite{L3}, or even \cite{BLL}, \cite{DLM}, or \cite{LM}.

\begin{figure}[htb]
    \centering
$
\begin{tikzpicture}[scale=2.5]
\node (1) at (0,1) {$p_1$};
\node (2) at (0,0) {$p_2$};
\node (3) at (0.5,.5) {$p_3$};
\node (4) at (1.25, .75) {$p_4$};
\node (5) at (1.25, .25) {$p_5$};
\node (6) at (1.75, 1) {$p_6$};
\node (7) at (1.75, 0) {$p_7$};
\path[font=\small,>=angle 90]
(1) edge node [right] {$ $} (2)
(1) edge node [above] {$ $} (3)
(2) edge node [right] {$ $} (3)
(4) edge node [right] {$ $} (5)
(4) edge node [right] {$ $} (6)
(4) edge node [right] {$ $} (7)
(5) edge node [right] {$ $} (6)
(5) edge node [right] {$ $} (7)
(6) edge node [right] {$ $} (7)
(1) edge node [right] {$ $} (4)
(2) edge node [right] {$ $} (5)
(3) edge node [right] {$ $} (4);
\end{tikzpicture}
$
    \caption{The graph $C_{18}$}
    \label{figC18}
\end{figure}
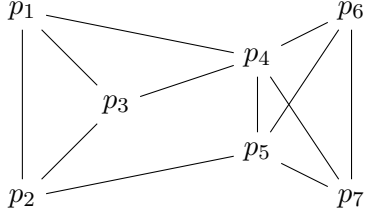

\begin{lemma}\label{ad1}
Assume $C_{18}=\Delta(G)$ for some finite solvable group $G$, where $|G|$ is minimal. Then $G$ does not have a normal nonabelian Sylow $p_i$-subgroup for $i\in\{1,3,4,5,6,7\}$. In particular, each $p_i$ is a strongly admissible vertex.
\end{lemma}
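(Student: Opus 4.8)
The plan is to prove the genuine content of the statement---that each of the six listed vertices is strongly admissible---and then read off the Sylow conclusion from Lemma \ref{strong}. Since $|G|$ is minimal, any nontrivial proper normal subgroup $N$ yields a smaller solvable group $G/N$ whose degrees lie in $\cd(G)$, so $\Delta(G/N)$ is a proper subgraph of $C_{18}$ (equality would contradict minimality). Thus the hypothesis of Lemma \ref{strong} is automatic, and once strong admissibility is in hand we conclude that no Sylow $p_i$-subgroup is normal, which in particular forbids a normal nonabelian one. Before starting I would record the two graph automorphisms of $C_{18}$, namely the transpositions $p_1\leftrightarrow p_3$ and $p_6\leftrightarrow p_7$, which reduce the work to the four representatives $p_1$, $p_4$, $p_5$, and $p_6$. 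I would also note that $p_2$ is omitted from the list precisely because it fails to be admissible (one of its required subgraphs, e.g.\ $\Gamma[p_2]$, occurs), which is why only the six vertices above are treated.

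For condition (i) I would delete each representative vertex and locate a structural obstruction in the resulting six-vertex graph. Deleting $p_1$ leaves the cliques $\{p_4,p_5,p_6,p_7\}$ and $\{p_2,p_3\}$ joined by the matching $p_3p_4$, $p_2p_5$, i.e.\ exactly $\Gamma_{4,2}$, which is non-occurring by Theorem \ref{KT}. Deleting $p_4$ leaves two triangles $\{p_1,p_2,p_3\}$ and $\{p_5,p_6,p_7\}$ joined by the single bridge $p_2p_5$, producing two cut vertices and hence a non-occurring graph by \cite{MQ}. Deleting $p_5$ gives a diameter-three graph in which the set of vertices at distance two from $p_2$ is the singleton $\{p_4\}$, so $|\rho_3|=1<3$ contradicts Proposition \ref{sassresult}\eqref{sass1}. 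The graph $\Gamma[p_6]$ (two triangles carrying the cross edges $p_1p_4$, $p_3p_4$, $p_2p_5$) admits no such clean obstruction---it has diameter two, satisfies P\'alfy's condition, and has at most one cut vertex---so here I would appeal directly to the six-vertex classification in \cite{BLL}.

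Conditions (ii) and (iii) I would organize around two recurring mechanisms. First, the complement $\overline{C_{18}}$ is bipartite with parts $\{p_1,p_2,p_3\}$ and $\{p_4,p_5,p_6,p_7\}$, so deleting any edge lying inside one of the two cliques of $C_{18}$ immediately creates a triangle, and hence an odd cycle, in the complement; this violates Lemma \ref{PC} (or Theorem \ref{genp}) and, since the offending cycle survives further deletions, kills every subgraph in which such an edge is removed. Second, deleting one of the three cross edges $p_1p_4$, $p_3p_4$, $p_2p_5$ turns $C_{18}$ into a diameter-three graph whose distance-two set from the relevant endpoint has size at most two, again contradicting Proposition \ref{sassresult}\eqref{sass1}. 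Consequently, for condition (ii) the only subsets not dispatched by the complement argument are those consisting solely of cross edges incident to $p_i$, and each of these is handled by the diameter-three count (including the joint removal of $p_1p_4$ and $p_3p_4$ at $p_4$). For condition (iii) I would further delete edges among the neighbors of $p_i$, which typically isolates a vertex (forcing a non-complete component in a two-component graph), creates a second cut vertex, or once more reduces to a six-vertex graph settled in \cite{BLL}.

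Assembling these verifications gives strong admissibility of $p_1,p_3,p_4,p_5,p_6,p_7$, and Lemma \ref{strong} then yields the stated non-normality. I expect the principal obstacle to be organizational rather than conceptual: the number of edge-deletion subgraphs in conditions (ii) and (iii) is large (for instance $p_4$ has degree five), and the argument relies on showing that the quick obstructions---complement odd cycles and the diameter-three bound---cover all but a controlled handful of cases. The genuinely delicate points are the residual six-vertex graphs, above all $\Gamma[p_6]$ and a few condition-(iii) subgraphs, which have no elementary obstruction and must be matched against the explicit non-occurring list of \cite{BLL}; confirming that each such subgraph is a \emph{resolved} rather than still-open six-vertex case is where care is required.
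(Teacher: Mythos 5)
Your overall route is the same as the paper's: verify strong admissibility vertex by vertex by showing every relevant subgraph is non-occurring (via P\'alfy, the odd-cycle generalization, the diameter-three bounds, and the six-vertex classification), exploit the symmetries $p_1\leftrightarrow p_3$ and $p_6\leftrightarrow p_7$, and then invoke Lemma \ref{strong}. Several of your individual obstructions differ harmlessly from the paper's (you kill $C_{18}[p_4]$ with the two-cut-vertex result of \cite{MQ} where the paper uses the diameter-three classification; you use Proposition \ref{sassresult}\eqref{sass1} where the paper sometimes uses \eqref{sass3}), and these substitutions are all valid.

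However, your first ``recurring mechanism'' contains a concrete error. You claim that deleting any edge inside one of the two cliques of $C_{18}$ creates a triangle in the complement. This is false for $\epsilon(p_4,p_5)$: in $\overline{C_{18}}$ the only non-neighbor of $p_4$ is $p_2$, while the non-neighbors of $p_5$ are $p_1$ and $p_3$, so the new complement edge $p_4p_5$ lies on no triangle. What it does lie on is a five-cycle (for instance $p_4,p_5,p_1,p_6,p_2$), so $C_{18}[\epsilon(p_4,p_5)]$ violates Theorem \ref{genp} but \emph{not} P\'alfy's condition \ref{PC} --- exactly the distinction the paper draws when treating $p_4$. This matters for your bookkeeping: you assert that the only edge-subsets not dispatched by the complement argument are those consisting solely of cross edges, but the singleton $\{\epsilon(p_4,p_5)\}$ is a clique-edge case that your triangle mechanism does not cover, and it is incident to both $p_4$ and $p_5$, two of the vertices you must handle in condition (ii). The fix is immediate (replace ``triangle'' by ``odd cycle'' and exhibit the five-cycle), but as written the step fails, and any audit of the remaining cases based on your classification would miscount. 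The rest of the plan, including the reliance on the six-vertex classification for $C_{18}[p_6]$ (which the paper identifies as $\Sigma^L_{2,1}$, non-occurring by \cite{LM}), is consistent with the paper.
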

\begin{proof}
We begin by checking for admissibility of the vertex $p_1$. If we remove the vertex $p_1$ and all incident edges, we are left with the graph $C_{18}[p_1]=\Gamma_{4,2}$, which is non-occurring from \cite{BL} (see Theorem \ref{KT}). Next, the edges $\epsilon(p_1,p_2)$ and $\epsilon(p_1,p_3)$ must remain, for losing either or both would violate P\'{a}lfy's condition. Additionally, removing the edge between $p_1$ and $p_4$ leaves us with a diameter three graph $C_{18}[\epsilon(p_1,p_4)]=C_{4}$, which violates the main result from \cite{S} and has been shown to not occur in Section \ref{secCDT}. Therefore, $p_1$ is admissible.

For strong admissibility, we consider the removal of $p_1$, all incident edges, and one or more of the edges between two vertices adjacent to $p_1$. It is easy to see that $C_{18}[p_1, \epsilon(p_2, p_3)]$ violates P\'{a}lfy's condition, and so we cannot lose the edge between $p_2$ and $p_3$. Now, $C_{18}[p_1, \epsilon(p_3, p_4)]$ has diameter three, which again does not occur by \cite{S}. Thus, $p_1$ is strongly admissible. By symmetry, we also have that $p_3$ is strongly admissible.

Next, we consider the vertex $p_4$. The graph $C_{18}[p_4]$ has diameter three and is not the unique six-vertex diameter three graph which occurs in \cite{L2}, so it is therefore non-occurring. The graph $C_{18}[\epsilon(p_4, p_5)]$ contains a five-cycle in its complement, which means it violates the generalized P\'{a}lfy's condition from \cite{A}. The vertices $p_6$ and $p_7$ are symmetric, and thus the edges $\epsilon(p_4, p_6)$ and $\epsilon(p_4, p_7)$ are symmetric. Losing either of the aforementioned edges would result in violations of P\'{a}lfy's condition. It has already been established that we cannot remove the edge $\epsilon(p_1, p_4)$. Symmetry demands that we also cannot remove the edge $\epsilon(p_3, p_4)$. Thus, if we lose any combination of edges from $p_4$, the resulting graphs will all be non-occurring due to either P\'{a}lfy's condition or having diameter three. Hence, $p_4$ is admissible.

For strong admissibility, we consider the removal of $p_4$, all incident edges, and one or more of the edges between two vertices adjacent to $p_4$. Observe that $C_{18}[p_4]$ has diameter three. From here, removing any number of edges between two vertices adjacent to $p_4$ will either maintain our diameter three violation or create a non-occurring disconnected graph with six vertices classified in \cite{BLL}. Therefore, $p_4$ is strongly admissible.

Consider the vertex $p_5$. Notice that $C_{18}[p_5]$ has diameter three and is non-occurring by \cite{S}. Next, we established previously that we cannot lose $\epsilon(p_4, p_5)$. Similarly, the symmetry between $p_6$ and $p_7$ makes $\epsilon(p_5, p_6)$ and $\epsilon(p_5, p_7)$ symmetric as well, and the removal of either edge would give us a graph which violates P\'{a}lfy's condition, so these graphs again do not occur. Removing the edge between $p_2$ and $p_5$ creates the diameter three graph $C_{18}[\epsilon(p_2,p_5)]=C_5$, which violates the main result from \cite{S} and has also been shown not to occur in Section \ref{secCDT}. Thus, losing any combination of edges incident to $p_5$ results in a graph that does not occur. Therefore, $p_5$ is admissible.

For strong admissibility, we consider the removal of $p_5$, all incident edges, and one or more of the edges between two vertices adjacent to $p_5$. Notice that the graph $C_{18}[p_5]$ has diameter three and does not occur. All edges between two vertices adjacent to $p_5$ lie in the complete subgraph on vertices $p_4$, $p_6$, and $p_7$, and so the graphs obtained by removing any one or more of these edges will create either a diameter three graph or a non-occurring disconnected graph on six vertices. Thus, $p_5$ is strongly admissible. 

We now check for admissibility of $p_6$. The graph $C_{18}[p_6]$ is simply the graph $\Sigma^{L}_{2,1}$ which was shown not to occur in \cite{LM}. It is easy to see that removing one or more of the edges incident to $p_6$ will violate P\'{a}lfy's condition. Thus, $p_6$ is admissible. 

For strong admissibility, we consider the removal of $p_6$, all incident edges, and one or more of the edges between two vertices adjacent to $p_6$. The graph $C_{18}[p_6, \epsilon(p_4, p_5)]$ has a five-cycle in its complement, thus violating the generalized P\'{a}lfy's condition from \cite{A}. Similarly, removing the edges $\epsilon(p_4, p_7)$ or $\epsilon(p_5, p_7)$ will also violate this generalized P\'{a}lfy's condition. Hence, $p_6$ is strongly admissible. Again by symmetry, we also have that $p_7$ is strongly admissible.

Since $p_i$ is strongly admissible for each $i\in\{1,3,4,5,6,7\}$, then there is no normal nonabelian Sylow $p_i$-subgroup for each $i$ by way of Lemma \ref{strong}.
\end{proof}

\begin{lemma}\label{pi1}
Assume $C_{18}=\Delta(G)$ for some finite solvable group $G$, where $|G|$ is minimal. There exists no normal nonabelian Sylow $p_2$-subgroup.
\end{lemma}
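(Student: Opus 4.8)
The plan is to apply Lemma \ref{pi} with the vertex $q=p_2$. This is the appropriate tool precisely because the strong-admissibility route of Lemma \ref{ad1} (via Lemma \ref{strong}) is unavailable for $p_2$: deleting $p_2$ leaves the graph $C_{18}[p_2]$, which is the triangle on $\{p_1,p_3,p_4\}$ glued along the single vertex $p_4$ to the clique on $\{p_4,p_5,p_6,p_7\}$, and we have no means to declare this six-vertex graph non-occurring. Hence $p_2$ cannot be certified admissible, and we must instead exploit an admissible vertex in its vicinity.

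First I would record the local data at $p_2$: its neighborhood is $\pi=\{p_1,p_3,p_5\}$ and its non-neighborhood is $\rho=\{p_4,p_6,p_7\}$. Since $C_{18}$ is covered by two cliques it satisfies P\'alfy's condition (Corollary \ref{genpcor}), so $C_{18}$ meets the ambient requirement of Lemma \ref{pi}. The only edge inside $\pi$ is $\epsilon(p_1,p_3)$, while $p_5$ is adjacent to neither $p_1$ nor $p_3$; I would therefore take $\pi_1=\{p_5\}$ and $\pi_2=\{p_1,p_3\}$, so that no vertex of $\pi_1$ is adjacent to a vertex of $\pi_2$. Setting $v=p_1\in\pi_2$, the vertex $v$ is adjacent to $p_4\in\rho$, and $p_4$ is admissible by Lemma \ref{ad1}; thus $s=p_4$ serves as the required admissible neighbor. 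Finally $w=p_6\in\rho$ is not adjacent to $v=p_1$, furnishing the last hypothesis.

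It remains to check that $\Delta(N)$ is a proper subgraph of $C_{18}$ for every proper normal subgroup $N\trianglelefteq G$. This follows from minimality: Clifford theory gives $\Delta(N)\subseteq\Delta(G)$ (if a pair of primes divides a degree of $N$, the same pair divides the degree of any character of $G$ lying above it), and an equality $\Delta(N)=C_{18}$ would produce a solvable group of order smaller than $|G|$ realizing $C_{18}$, contradicting minimality. Hence $\Delta(N)\subsetneq C_{18}$, and Lemma \ref{pi} applies to give that a Sylow $p_2$-subgroup of $G$ is not normal; in particular $G$ has no normal nonabelian Sylow $p_2$-subgroup, as claimed.

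The genuine obstacle is the bookkeeping in the middle step rather than any depth: the tempting assignment $\pi_2=\{p_5\}$ fails, since $p_5$ is adjacent to all of $\rho$ and so leaves no candidate for the non-adjacent vertex $w$. One is forced to place the cross-edge neighbor $p_5$ alone in $\pi_1$ and the triangle neighbors $p_1,p_3$ in $\pi_2$, and it is exactly the admissibility of $p_4$ secured in Lemma \ref{ad1} that licenses the choice $v=p_1$, $s=p_4$, $w=p_6$.
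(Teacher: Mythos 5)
Your proposal is correct and matches the paper's proof essentially verbatim: the paper applies Lemma \ref{pi} with the identical choices $\pi_1=\{p_5\}$, $\pi_2=\{p_1,p_3\}$, $v=p_1$, $s=p_4$, and $w=p_6$, citing the admissibility of $p_4$ from Lemma \ref{ad1}. Your additional remarks on why $\Delta(N)$ is proper for every proper normal subgroup $N$ and on why $\pi_2=\{p_5\}$ would fail are sound, just more explicit than the paper's treatment.
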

\begin{proof}
We will follow the notation from Lemma \ref{pi}. For $q=p_2$, we have $\pi=\{p_1, p_3, p_5\}$ and $\rho= \{p_4, p_6, p_7\}$. Setting $\pi_1 = \{p_5\}$ and $\pi_2 = \{p_1, p_3\}$, we choose $v = p_1$, which is adjacent to $s = p_4\in\rho$, shown to be admissible in Lemma \ref{ad1}. We also note that $w = p_6 \in \rho$ is not adjacent to $v$, and hence the result follows from Lemma \ref{pi}.
\end{proof}

\begin{lemma}\label{sub1}
No proper subgraph with the same vertex set as $C_{18}$ occurs as the prime character degree graph of any solvable group.
\end{lemma}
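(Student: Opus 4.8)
The plan is to sort the twelve edges of $C_{18}$ by their role in the complement $\overline{C_{18}}$. The key observation is that $\overline{C_{18}}$ is \emph{bipartite}, with parts $\{p_1,p_2,p_3\}$ (the triangle) and $\{p_4,p_5,p_6,p_7\}$ (the $K_4$): its nine edges are exactly the nine non-adjacencies, each joining a triangle vertex to a $K_4$ vertex. A proper subgraph with the same vertex set is $C_{18}$ with a nonempty edge set $S$ deleted, and deleting an edge of $C_{18}$ is the same as \emph{adding} it to $\overline{C_{18}}$. Since the resulting graph $H$ satisfies $H\subseteq C_{18}[e]$ for each $e\in S$, we get $\overline{C_{18}[e]}\subseteq\overline{H}$; hence any odd cycle forced into the complement by a single deletion persists under all further deletions. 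This monotonicity is what lets me avoid checking all $2^{12}-1$ subsets individually.

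First I would dispose of every $H$ whose deleted set $S$ contains an edge lying \emph{inside} one of the two cliques, i.e.\ a triangle edge or a $K_4$ edge. Adding such a within-part edge to the bipartite complement immediately closes an odd cycle: each of the three triangle edges and each of $\epsilon(p_4,p_6),\epsilon(p_4,p_7),\epsilon(p_5,p_6),\epsilon(p_5,p_7),\epsilon(p_6,p_7)$ completes a triangle in $\overline{C_{18}}$ through a common complement-neighbor, while $\epsilon(p_4,p_5)$ completes the five-cycle $p_4,p_5,p_1,p_6,p_2$ already noted in Lemma~\ref{ad1}. By the monotonicity above, $\overline{H}$ retains this odd cycle, so $H$ violates Theorem~\ref{genp} and is non-occurring. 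This settles all cases except those in which $S$ consists only of the three bridge edges joining the cliques.

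What remains is $S\subseteq\{\epsilon(p_1,p_4),\epsilon(p_2,p_5),\epsilon(p_3,p_4)\}$, where the complement stays bipartite and the odd-cycle argument gives nothing; here a short case analysis, shortened by the symmetry $p_1\leftrightarrow p_3$, takes over. If $|S|=1$ the graph is one of the diameter-three graphs $C_4$ or $C_5$ already eliminated in Section~\ref{secCDT}. If $|S|=2$ the two cliques are joined by a single edge, both of whose endpoints are cut-vertices, contradicting the one-cut-vertex bound of \cite{MQ}. If $|S|=3$ the graph is the disconnected union of $K_3$ and $K_4$, eliminated in Section~\ref{secD3} by P\'alfy's inequality (Theorem~\ref{PI}). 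Combining the two cases, every proper subgraph on the same vertex set is non-occurring.

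The only genuinely delicate part is this last, bridge-only case: because deleting between-clique edges preserves bipartiteness of $\overline{C_{18}}$, the uniform Theorem~\ref{genp} argument is unavailable, and one must instead recognize three different obstructions (diameter three, a second cut-vertex, and outright disconnection) across the seven subsets. I expect this to be the main obstacle, but it is a finite and small check, and the $p_1\leftrightarrow p_3$ symmetry trims it further, so no new machinery beyond what is already collected in the preliminaries is needed.
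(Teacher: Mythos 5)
Your proof is correct, and it reaches the paper's conclusion by the same overall decomposition (within-clique edges versus the three bridge edges, then a case check on how many bridges are deleted), but the way you justify the first reduction is genuinely different and more self-contained. The paper disposes of the within-clique edges by back-reference to the individual edge-removal checks in Lemma \ref{ad1}, implicitly invoking the same monotonicity you state; you instead observe that $\overline{C_{18}}$ is bipartite with parts $\{p_1,p_2,p_3\}$ and $\{p_4,p_5,p_6,p_7\}$, so deleting any within-clique edge adds an intra-part edge to a connected bipartite complement and hence forces an odd cycle that survives all further deletions. This handles all $2^{12}-2^3$ offending subsets uniformly via Theorem \ref{genp} (I checked the common-complement-neighbor claims for each of the eight triangle-closing edges and the five-cycle for $\epsilon(p_4,p_5)$; all are correct). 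For the bridge-only subsets your identifications agree with the paper's ($C_4$ and $C_5$ for one deletion, the disconnected $K_3\cup K_4$ killed by Theorem \ref{PI} for three), except that for two deletions you argue directly from the two cut-vertices of the resulting graph via \cite{MQ} rather than naming it as $C_1$; since the paper's own elimination of $C_1$ in Section \ref{secCDT} cites exactly that cut-vertex criterion as one of two valid routes, this is equivalent. What your approach buys is a cleaner, verifiable reduction that does not lean on the scattered case analysis of Lemma \ref{ad1}; what the paper's buys is brevity and reuse of graphs already catalogued ($C_1$, $C_4$, $C_5$).
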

\begin{proof}
We have established already that we cannot lose any edges in the complete graphs on $\{p_1, p_2, p_3\}$ and $\{p_4, p_5, p_6, p_7\}$. Thus, we are left to consider the three edges $\epsilon(p_1, p_4)$, $\epsilon(p_3, p_4)$, and $\epsilon(p_2, p_5)$. We will check all subgraphs for which we remove one or more of those edges.

First, recall that $p_1$ and $p_3$ are symmetric. This means that $\epsilon(p_1, p_4)$ and $\epsilon(p_3, p_4)$ are symmetric as well. Notice that $C_{18}[\epsilon(p_1, p_4)] = C_{18}[\epsilon(p_3, p_4)] = C_4$ and $C_{18}[\epsilon(p_2, p_5)] = C_5$ have both been shown not to occur. Next, considering losing two of the aforementioned three edges, we have $C_{18}[\epsilon(p_1,p_4),\epsilon(p_3,p_4)]=C_{18}[\epsilon(p_1,p_4),\epsilon(p_2,p_5)]=C_{18}[\epsilon(p_3,p_4),\epsilon(p_2,p_5)]=C_1$, which does not occur (shown in Section \ref{secCDT}).

Lastly, removing all three edges will give us the disconnected graph in Figure \ref{figDDNO}, which was shown not to occur in Section \ref{secD3}.

Hence, no proper subgraph of $C_{18}$ with the same vertex set occurs as the prime character degree graph of any solvable group.
\end{proof}

Assuming that there exists some solvable group $G$ with $|G|$ minimal such that $\Delta(G)=C_{18}$, we now have the conclusion that there are no normal nonabelian Sylow $p$-subgroups for all $p\in\rho(G)$ (using Lemmas \ref{ad1} and \ref{pi1}). With $F$ as the Fitting subgroup of $G$, we then note that $\rho(G)=\pi(|G:F|)$. Therefore, $\rho(G)=\rho(G/\Phi(G))$. Here, as is convention, $\Phi(G)$ denotes the Frattini subgroup of $G$. Lemma \ref{sub1} above verifies there is no proper subgraph of $C_{18}$ with the same vertex set which occurs as the prime character degree graph of any solvable group, and therefore, since $|G|$ is minimal, we are forced to conclude that $\Phi(G)=1$. We can then apply Lemma III 4.4 from \cite{H} to obtain the existence of a subgroup $H$ of $G$ such that $G=HF$ and $H\cap F=1$.

\begin{lemma}\label{min1}
Assume $C_{18}=\Delta(G)$ for some finite solvable group $G$, where $|G|$ is minimal. Then the Fitting subgroup $F$ of $G$ is minimal normal in $G$.
\end{lemma}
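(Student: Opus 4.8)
The plan is to argue by contradiction: assuming $F$ is not minimal normal, I would derive a contradiction, which together with the hypotheses already assembled lets us invoke Lemma \ref{thefinal} to close out the section. First I would record the structural consequences of $\Phi(G)=1$. Since $\Phi(F)\subseteq\Phi(G)=1$ (see \cite{H}) and $F$ is nilpotent, each of its Sylow subgroups is elementary abelian, so $F$ is abelian. For a solvable group one always has $C_G(F)\subseteq F$, and abelianness gives $F\subseteq C_G(F)$, whence $F=C_G(F)$; consequently the complement $H$ furnished by $G=HF$ with $H\cap F=1$ acts faithfully on $F$ by conjugation, since the kernel of this action is $H\cap C_G(F)=H\cap F=1$. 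Because $\Phi(G)=1$, the Fitting subgroup is the socle, a direct product of minimal normal subgroups of $G$, equivalently a faithful completely reducible $H$-module. Saying $F$ is \emph{not} minimal normal is then exactly saying this module has at least two irreducible summands, so we may write $F=V\times W$ with $V$ minimal normal and $W\neq 1$.

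Next I would isolate the principle that every proper quotient must lose a vertex. For any nontrivial proper normal subgroup $N$ of $G$, the inclusion $\cd(G/N)\subseteq\cd(G)$ gives $\Delta(G/N)\subseteq C_{18}$. Since $|G|$ is minimal among realizations of $C_{18}$ and $|G/N|<|G|$, we cannot have $\Delta(G/N)=C_{18}$, so $\Delta(G/N)$ is a \emph{proper} subgraph of $C_{18}$. If $\Delta(G/N)$ retained all seven vertices it would be a proper subgraph on the full vertex set that occurs, contradicting Lemma \ref{sub1}. Hence $\rho(G/N)\subsetneq\rho(G)$: passing to any proper quotient deletes at least one of $p_1,\dots,p_7$.

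The contradiction would then come from Clifford theory over the abelian normal subgroup $F$. Every $\chi\in\Irr(G)$ lies over a linear $\lambda\in\Irr(F)$ with $\chi(1)=|G:I_G(\lambda)|\,\beta(1)$ for some $\beta\in\Irr(I_G(\lambda)/F)$, and $\chi$ is trivial on a summand precisely when $\lambda$ is; the characters trivial on $V$ are exactly those of $G/V$. Writing the degree set accordingly, $\cd(G)=\cd(G/V)\cup\cd(G/W)\cup D$, where $D$ collects the degrees arising from $\lambda$ nontrivial on both $V$ and $W$. By the quotient principle above, each of $G/V$ and $G/W$ omits a vertex, so every prime in $\rho(G)$ must be recoverable either from the opposite quotient or from $D$. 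The goal is to exploit the rigid adjacency pattern of $C_{18}$—the clique $\{p_1,p_2,p_3\}$, the clique $\{p_4,p_5,p_6,p_7\}$, and the sparse bridges $\epsilon(p_1,p_4)$, $\epsilon(p_3,p_4)$, $\epsilon(p_2,p_5)$—together with the absence of normal nonabelian Sylow subgroups from Lemmas \ref{ad1} and \ref{pi1}, to show that the seven vertices cannot be distributed across $\cd(G/V)$, $\cd(G/W)$, and $D$ consistently with $\Delta(G)=C_{18}$. Concretely, I would aim to exhibit a minimal normal summand whose removal preserves all seven vertices, contradicting the quotient principle.

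The main obstacle is exactly this last bookkeeping step: controlling which primes divide the orbit indices $|G:I_G(\lambda)|$ and the stabilizer degrees $\beta(1)$ finely enough to locate, for each vertex $p_j$, a non-faithful character (one whose kernel meets $F$ nontrivially) realizing it. This is where the detailed edge structure of $C_{18}$ and the module decomposition of $F$ interact most delicately, and where the earlier admissibility results are expected to do the real work.
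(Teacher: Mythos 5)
Your setup is sound and matches the paper's opening moves: you invoke $\Phi(G)=1$ to get the complement $H$, you decompose $F=V\times W$ with both factors nontrivial normal, and you correctly argue that each of $\rho(G/V)$ and $\rho(G/W)$ is a proper subset of $\rho(G)$ (the paper reaches the same conclusion, and also notes that any prime lost from one quotient must survive in the other, so $\rho(G)=\rho(G/V)\cup\rho(G/W)$). But the proposal stops exactly where the proof has to begin, and you say so yourself: the ``bookkeeping step'' you defer is the entire content of the lemma. A plan to ``exploit the rigid adjacency pattern'' via the decomposition $\cd(G)=\cd(G/V)\cup\cd(G/W)\cup D$ is not an argument, because nothing in your setup controls the orbit sizes and stabilizer degrees that populate $D$, and you have given no mechanism for forcing a vertex-preserving proper quotient.

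The idea you are missing is the one that makes the case analysis finite and tractable. Let $E$ be the Fitting subgroup of the complement $H$. Any prime $q$ lost from, say, $\rho(G/V)$ satisfies $q\notin\rho(G/F)=\rho(H)$, so the Sylow $q$-subgroup of $H$ lies in $E$; since $\cd(G)$ contains a degree divisible by \emph{every} prime dividing $|EF:F|=|E|$, all such lost primes are pairwise adjacent. Hence $\rho(G)\setminus\bigl(\rho(G/V)\cap\rho(G/W)\bigr)$ must lie in one of the four maximal cliques of $C_{18}$: $\{p_1,p_2,p_3\}$, $\{p_1,p_3,p_4\}$, $\{p_2,p_5\}$, or $\{p_4,p_5,p_6,p_7\}$. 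Each case is then killed concretely: the $\{p_1,p_2,p_3\}$ case by a Gallagher/Hall-subgroup argument forcing $G/FE$ abelian and hence $O^{p_4}(G)<G$ against admissibility; the remaining cases by enumerating the possible vertex sets of $\rho(G/N)$, observing that the only graphs on those vertex sets that can occur are disconnected, and then applying Theorem 5.5 of Lewis to produce a central Sylow $p_i$-subgroup in the quotient, contradicting $O^{p_i}(G)=G$ from Lemma \ref{ad1}. Without the clique observation and this case-by-case elimination, the proof is not there.
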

\begin{proof}
Following the assumptions, we then can define $H$ as above. Furthermore, set $E$ to be the Fitting subgroup of $H$.

Next, we proceed by contradiction. Thus, we suppose that there exists some normal subgroup $N$ of $G$ such that $1<N<F$. We aim to show that no such $N$ can exist.

Notice again by \cite{H}, this time III 4.5, that we know there exists some normal subgroup $M$ of $G$ such that $F=N\times M$, and since both $N$ and $M$ are nontrivial, we have that $\rho(G/N)\subset\rho(G)$ and $\rho(G/M)\subset\rho(G)$. For any prime $q\in\rho(G)\setminus\rho(G/N)$, it is known that $G/N$ must have a normal nonabelian Sylow $q$-subgroup whose class is a formation. Therefore, it is required that $q\in\rho(G/M)$. Hence, we get that $\rho(G)=\rho(G/N)\cup\rho(G/M)$.

So for this $q\in\rho(G)\setminus\rho(G/N)$, notice that $q\not\in\rho(G/F)=\rho(H)$, and so $E$ must contain the Sylow $q$-subgroup of $H$. Since $q\in\rho(G)$, however, we must have that $q$ divides $|H|$, and so $q$ divides $|E|$ too. Then since $\cd(G)$ contains a degree divisible by all the prime divisors of $|EF:F|=|E|$, we get that $\rho(G)\setminus(\rho(G/N)\cap\rho(G/M))$ lies in a complete subgraph of $\Delta(G)$. Hence, $\rho(G)\setminus(\rho(G/N)\cap\rho(G/M))$ must lie in one of the following sets: (a) $\{p_1, p_2, p_3\}$, (b) $\{p_1, p_3, p_4\}$, (c) $\{p_2, p_5\}$, or (d) $\{p_4, p_5, p_6, p_7\}$.

Suppose (a) occurs; that is, suppose $\rho(G)\setminus(\rho(G/N)\cap\rho(G/M)) \subseteq \{p_1, p_2, p_3\}$. Following \cite{L3}, we see that $E$ has a Hall $\{p_1,p_2,p_3\}$-subgroup of $H$. One can then conclude that $E$ is in fact the Hall $\{p_1,p_2,p_3\}$-subgroup since $\cd(G)$ has a degree that is divisible by all the primes dividing $|E|$ and $|E|$ is divisible by no other primes. Next, we can find a character $\chi\in\Irr(G)$ with $p_1p_2p_3$ dividing $\chi(1)$. Letting $\theta$ be an irreducible constituent of $\chi_{FE}$, one can see that $\chi(1)/\theta(1)$ divides $|G:FE|$ and that $\chi(1)$ is relatively prime to $|G:FE|$. This forces $\chi_{FE}=\theta$. Since $p_1$, $p_2$, and $p_3$ all divide $\theta(1)$, and the only possible primes that could divide a character in $\cd(G/FE)$ are $p_4$, $p_5$, $p_6$, or $p_7$, we get that $\cd(G/FE)=\{1\}$ by way of Gallagher's Theorem. Hence, $G/FE$ is abelian. For instance, this then yields $O^{p_4}(G)<G$, a contradiction as $p_4$ is admissible.

Suppose (b) occurs; that is, suppose $\rho(G)\setminus(\rho(G/N)\cap\rho(G/M)) \subseteq \{p_1, p_3, p_4\}$. This implies that $\{p_2, p_5, p_6, p_7\} \subseteq \rho(G/N)\cap\rho(G/M)$. Since $p_1\in\rho(G)=\rho(G/N)\cup\rho(G/M)$, then we know that $p_1\in\rho(G/N)$ or $p_1\in\rho(G/M)$. Without loss of generality, we can suppose that $p_1\in\rho(G/N)$. Since $\rho(G/N)$ is proper in $\rho(G)$, we then have three options as to what $\rho(G/N)$ can be: (i) $\{p_1,p_2,p_5,p_6,p_7\}$, (ii) $\{p_1,p_2,p_3,p_5,p_6,p_7\}$, or (iii) $\{p_1,p_2,p_4,p_5,p_6,p_7\}$. For (i), we have that $\rho(G/N)=\{p_1,p_2,p_5,p_6,p_7\}$, and one can see that no connected graph with this vertex set occurs as the prime character degree graph of any solvable group. The only possibility is for the disconnected graph to occur, with complete components $\{p_1,p_2\}$ and $\{p_5,p_6,p_7\}$. Then by Theorem 5.5 from \cite{L} we know that $G/N$ has a central Sylow $p_3$-subgroup or a central Sylow $p_4$-subgroup, in which case $O^{p_3}(G)<G$ or $O^{p_4}(G)<G$, respectively. This is our contradiction, since both $p_3$ and $p_4$ have been shown to be admissible, and so $O^{p_3}(G)=G$ and $O^{p_4}(G)=G$. For (ii), we have $\rho(G/N)=\{p_1,p_2,p_3,p_5,p_6,p_7\}$. However, in this scenario, no graph, connected or disconnected, can occur with that vertex set. This leaves us with (iii), giving $\rho(G/N)=\{p_1,p_2,p_4,p_5,p_6,p_7\}$. It is not hard to see that the only possible graph arising is the disconnected graph with complete components $\{p_1,p_2\}$ and $\{p_4,p_5,p_6,p_7\}$. Again by Theorem 5.5 from \cite{L}, this implies that $G/N$ has a central Sylow $p_3$-subgroup, which means that $G=O^{p_3}(G)<G$, a contradiction since $p_3$ is admissible.

Suppose (c) occurs; that is, suppose $\rho(G)\setminus(\rho(G/N)\cap\rho(G/M)) \subseteq \{p_2, p_5\}$. This implies that $\{p_1, p_3, p_4, p_6, p_7\} \subseteq \rho(G/N)\cap\rho(G/M)$. Since $\rho(G/N)$ and $\rho(G/M)$ are both proper in $\rho(G)$, and $\rho(G)=\rho(G/N)\cup\rho(G/M)$, then without loss of generality, we must have that $\rho(G/N)=\{p_1,p_2,p_3,p_4,p_6,p_7\}$ and $\rho(G/M)=\{p_1,p_3,p_4,p_5,p_6,p_7\}$. However, this is a contradiction because no graph can occur with the vertex set given to $\rho(G/N)$ as stipulated above, regardless if it is connected or disconnected.

Suppose (d) occurs; that is, suppose $\rho(G)\setminus(\rho(G/N)\cap\rho(G/M)) \subseteq \{p_4, p_5, p_6, p_7\}$. This implies that $\{p_1, p_2, p_3\} \subseteq \rho(G/N)\cap\rho(G/M)$. Similar to (a) above, since $p_7\in\rho(G)=\rho(G/N)\cup\rho(G/M)$, then we know that $p_7\in\rho(G/N)$ or $p_7\in\rho(G/M)$. Without loss of generality, we can suppose that $p_7\in\rho(G/N)$. Since $\rho(G/N)$ is proper in $\rho(G)$, we then have seven options as to what $\rho(G/N)$ can be: (i) $\{p_1,p_2,p_3,p_7\}$, (ii) $\{p_1,p_2,p_3,p_4,p_7\}$, (iii) $\{p_1,p_2,p_3,p_5,p_7\}$, (iv) $\{p_1,p_2,p_3,p_6,p_7\}$, (v) $\{p_1,p_2,p_3,p_4,p_5,p_7\}$, (vi) $\{p_1,p_2,p_3,p_4,p_6,p_7\}$, or (vii) $\{p_1,p_2,p_3,p_5,p_6,p_7\}$. For (i), we have that $\rho(G/N)=\{p_1,p_2,p_3,p_7\}$, and one can see that the only graph that arises here is the disconnected graph with complete components $\{p_1,p_2,p_3\}$ and $\{p_7\}$. Then using Theorem 5.5 from \cite{L}, we have that $G/N$ has a central Sylow $p_4$-subgroup, central Sylow $p_5$-subgroup, or central Sylow $p_6$-subgroup. Therefore, $O^{p_4}(G)<G$, $O^{p_5}(G)<G$, or $O^{p_6}(G)<G$, respectively. Regardless of the situation, we have a contradiction, as $O^{p_4}(G)=O^{p_5}(G)=O^{p_6}(G)=G$ since $p_4$, $p_5$, and $p_6$ are all admissible vertices. For (ii), (iii), and (iv), one can check that the only graph that can arise for $G/N$ is the disconnected graph with complete component $\{p_1,p_2,p_3\}$ and the other component having two vertices, one of which is $p_7$. In each case, we can again apply Theorem 5.5 from \cite{L} to conclude that $G/N$ has a central Sylow $p_i$-subgroup for some $4\leq i\leq6$, which is a contradiction since $p_i$ is admissible for each of these $i$. For (v), (vi), and (vii), notice that there is no graph, connected or disconnected, that can occur with each respective vertex set.

Hence, no such $N$ can occur, and we get our desired conclusion that the Fitting subgroup $F$ is minimal normal in $G$.
\end{proof}

\begin{proposition}\label{con1}
The graph $C_{18}$ does not occur as the prime character degree graph for any solvable group.
\end{proposition}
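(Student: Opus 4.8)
The plan is to assemble the three preceding lemmas into a single application of Lemma \ref{thefinal}. The entire machinery of Lemmas \ref{ad1}, \ref{pi1}, \ref{sub1}, and \ref{min1} has been set up precisely so that the hypotheses of Lemma \ref{thefinal} are met by any hypothetical minimal solvable group $G$ with $\Delta(G)=C_{18}$. So the strategy is: assume for contradiction that $C_{18}$ occurs, pass to a group $G$ of minimal order realizing it, and then verify each hypothesis of Lemma \ref{thefinal} in turn before invoking it to reach a contradiction.

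\begin{proof}
Suppose, for the sake of contradiction, that $C_{18}$ occurs as the prime character degree graph of some finite solvable group, and let $G$ be such a group with $|G|$ minimal. We verify the hypotheses of Lemma \ref{thefinal}. First, $C_{18}$ has seven vertices and satisfies P\'alfy's condition, as one checks directly that its complement contains no odd cycle. By the minimality of $|G|$, for every proper normal subgroup $N$ of $G$ we have that $\Delta(N)$ is a proper subgraph of $\Delta(G)=C_{18}$; moreover, $\Delta(G/N)$ must also be a proper subgraph of $C_{18}$, since otherwise $G/N$ would be a smaller group realizing $C_{18}$, contradicting minimality together with Lemma \ref{sub1} (which rules out any proper subgraph of $C_{18}$ on the same vertex set as occurring). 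By Lemma \ref{min1}, the Fitting subgroup $F$ of $G$ is minimal normal in $G$.

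It remains to produce the vertices $a$, $b$, $c$, and $d$ required by Lemma \ref{thefinal}: distinct vertices $a$ and $b$ such that $a$ is adjacent to an admissible vertex $c$, $b$ is not adjacent to $c$, and $a$ is not adjacent to an admissible vertex $d$. By Lemma \ref{ad1}, every $p_i$ for $i\in\{1,3,4,5,6,7\}$ is strongly admissible, hence admissible. Referring to the structure of $C_{18}$ in Figure \ref{figC18}, we take $a=p_2$, which is adjacent to the admissible vertex $c=p_5$; the vertex $b=p_6$ is not adjacent to $p_5$; and $a=p_2$ is not adjacent to the admissible vertex $d=p_4$. Thus all the conditions of Lemma \ref{thefinal} are satisfied.

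Applying Lemma \ref{thefinal}, we conclude that $C_{18}$ is not the prime character degree graph of any solvable group, contradicting our assumption. Hence $C_{18}$ does not occur.
\end{proof}

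**The main obstacle** in this argument is not the final assembly—which is essentially bookkeeping—but rather the verification, already carried out in the preceding lemmas, that enough vertices are admissible and that no proper same-vertex-set subgraph occurs. In particular, the delicate point is ensuring that the chosen witnesses $a,b,c,d$ genuinely satisfy the adjacency and non-adjacency constraints of Lemma \ref{thefinal} simultaneously; one must read the graph carefully, since $p_2$ sits in the triangle $\{p_1,p_2,p_3\}$ with only the single external edge $\epsilon(p_2,p_5)$, making it the natural candidate for the vertex $a$ that fails to reach a second admissible vertex. The genuinely hard work—establishing strong admissibility via the non-occurrence of numerous subgraphs, and the minimal-normality of $F$ through the exhaustive case analysis on $\rho(G/N)\cap\rho(G/M)$—has been discharged in Lemmas \ref{ad1} through \ref{min1}, so that Proposition \ref{con1} follows cleanly.
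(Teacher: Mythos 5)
Your overall strategy is exactly the paper's: invoke Lemma \ref{min1} to get that $F$ is minimal normal, check the subgraph hypotheses via minimality of $|G|$ together with Lemma \ref{sub1}, and then feed a quadruple of vertices into Lemma \ref{thefinal}. However, your choice of witnesses contains a genuine error. You take $c=p_5$ and $b=p_6$ and assert that $p_6$ is not adjacent to $p_5$; this is false. In $C_{18}$ the vertices $p_4$, $p_5$, $p_6$, $p_7$ form a complete subgraph, so $\epsilon(p_5,p_6)$ is an edge and the hypothesis of Lemma \ref{thefinal} that ``$b$ is not adjacent to $c$'' fails for your quadruple. As written, the lemma cannot be applied and the contradiction is not reached.

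The error is repairable without changing anything else: the only vertices of $C_{18}$ not adjacent to $p_5$ are $p_1$ and $p_3$, so with $a=p_2$, $c=p_5$, $d=p_4$ you may take $b=p_1$ (or $b=p_3$), and all four conditions of Lemma \ref{thefinal} are then satisfied. The paper instead uses $a=p_1$, $b=p_2$, $c=p_4$, $d=p_5$, which also works since $p_1\sim p_4$, $p_2\not\sim p_4$, and $p_1\not\sim p_5$. Either repaired quadruple completes the proof; your preliminary verification of P\'alfy's condition and of the properness of $\Delta(N)$ and $\Delta(G/N)$ is fine and in fact slightly more explicit than the paper's.
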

\begin{proof}
For the sake of contradiction, suppose that $C_{18}=\Delta(G)$ for some finite solvable group $G$, where $|G|$ is minimal. Observe that the Fitting subgroup $F$ of $G$ is minimal normal in $G$ by Lemma \ref{min1}. Next, consider the vertices $a=p_1$, $b=p_2$, $c=p_4$, and $d=p_5$, where we observe that $p_4$ and $p_5$ are admissible by Lemma \ref{ad1}. Applying Lemma \ref{thefinal} yields our contradiction, and therefore the graph $C_{18}$ is not the prime character degree graph for any solvable group.
\end{proof}

%%%%%%%%%%%%%%%%%%%%%%%%%%%%%%%%%%%%%%%%%
\subsection{Eliminating the graph $C_{20}$}\label{secC20}
%%%%%%%%%%%%%%%%%%%%%%%%%%%%%%%%%%%%%%%%%

Here our goal is to show that $C_{20}$ does not occur as the prime character degree graph for any solvable group (see Figure \ref{figC20}). Our strategy will mimic what was done in Section \ref{secC18}.

\begin{figure}[htb]
    \centering
$
\begin{tikzpicture}[scale=2.5]
\node (1) at (0,1) {$p_1$};
\node (2) at (0,0) {$p_2$};
\node (3) at (0.5,.5) {$p_3$};
\node (4) at (1.25, .75) {$p_4$};
\node (5) at (1.25, .25) {$p_5$};
\node (6) at (1.75, 1) {$p_6$};
\node (7) at (1.75, 0) {$p_7$};
\path[font=\small,>=angle 90]
(1) edge node [right] {$ $} (2)
(1) edge node [above] {$ $} (3)
(2) edge node [right] {$ $} (3)
(4) edge node [right] {$ $} (5)
(4) edge node [right] {$ $} (6)
(4) edge node [right] {$ $} (7)
(5) edge node [right] {$ $} (6)
(5) edge node [right] {$ $} (7)
(6) edge node [right] {$ $} (7)
(1) edge node [right] {$ $} (6)
(2) edge node [right] {$ $} (7)
(3) edge node [right] {$ $} (4)
(3) edge node [right] {$ $} (5);
\end{tikzpicture}
$
    \caption{The graph $C_{20}$}
    \label{figC20}
\end{figure}
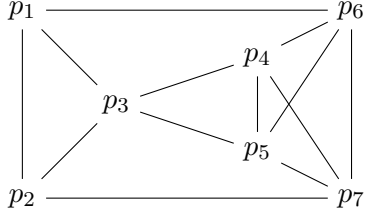

\begin{lemma}\label{ad2}
Assume $C_{20}=\Delta(G)$ for some finite solvable group $G$, where $|G|$ is minimal. Then $G$ does not have a normal nonabelian Sylow $p_i$-subgroup for $i\in\{3,4,5,6,7\}$. In particular, each $p_i$ is a strongly admissible vertex.
\end{lemma}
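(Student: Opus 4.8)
The plan is to imitate the proof of Lemma \ref{ad1}: verify that each of $p_3,p_4,p_5,p_6,p_7$ is a strongly admissible vertex and then invoke Lemma \ref{strong}. Its hypothesis, that $\Delta(G/N)$ is a proper subgraph of $C_{20}$ for every proper normal $N$, is automatic from the minimality of $|G|$, since otherwise $G/N$ would be a smaller group realizing $C_{20}$. First I would record the two symmetries of $C_{20}$: the involution $\sigma$ swapping $p_1\leftrightarrow p_2$ and $p_6\leftrightarrow p_7$, and the involution $\tau$ swapping $p_4\leftrightarrow p_5$. These fix $p_3$ and produce the orbits $\{p_4,p_5\}$ and $\{p_6,p_7\}$, so it suffices to treat the representatives $p_3$, $p_4$, and $p_6$. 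The key computational device is the complement $\overline{C_{20}}$, which is the bipartite graph between $\{p_1,p_2,p_3\}$ and $\{p_4,p_5,p_6,p_7\}$ in which $p_1$ is adjacent to $p_4,p_5,p_7$, $p_2$ is adjacent to $p_4,p_5,p_6$, and $p_3$ is adjacent to $p_6,p_7$. Deleting an edge of $C_{20}$ adds that pair to $\overline{C_{20}}$, and this violates P\'alfy's condition (Lemma \ref{PC}) exactly when the two endpoints already share a common neighbor in $\overline{C_{20}}$.

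For the vertex-deletion condition (i), I would identify $C_{20}[p_3]=\Gamma_{4,2}$ and $C_{20}[p_4]=\Gamma_{3,3}$, both non-occurring by Theorem \ref{KT}. For the edge-deletion condition (ii), most single deletions close a triangle in $\overline{C_{20}}$: for instance $\epsilon(p_4,p_5)$, $\epsilon(p_4,p_6)$, $\epsilon(p_4,p_7)$ give common neighbors $p_1$, $p_2$, $p_1$, and the analogous computations settle $p_3$ and $p_6$. The exceptional deletions $\epsilon(p_3,p_4)$ and its $\tau$-image $\epsilon(p_3,p_5)$ have no common neighbor; there the resulting graph is instead $\Gamma_{4,3}=C_{17}$, non-occurring by Theorem \ref{KT} and Section \ref{secCPC}. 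The one deletion at $p_6$ with no common neighbor, $\epsilon(p_6,p_1)$, yields a diameter-three graph with $|\rho_1\cup\rho_2|=3$ but $|\rho_3\cup\rho_4|=4<2^3$, contradicting Proposition \ref{sassresult}\eqref{sass3}. Every deletion of two or more incident edges inherits one of these obstructions: a complement triangle persists, except for the two-edge deletion $\{\epsilon(p_3,p_4),\epsilon(p_3,p_5)\}$, which collapses to a diameter-three graph with $|\rho_3|=2$, violating Proposition \ref{sassresult}\eqref{sass1}.

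For the neighbor-edge condition (iii) I would start from the six-vertex graph $C_{20}[p_i]$ and delete edges among the neighbors of $p_i$. At $p_3$ the relevant edges are $p_1p_2$ and $p_4p_5$: deleting $p_4p_5$ closes a complement triangle on $\{p_1,p_4,p_5\}$, while deleting $p_1p_2$ leaves $K_4$ with two pendant vertices, which has two cut vertices and is excluded by \cite{MQ}. At $p_4$ the neighbor edges are the triangle on $\{p_5,p_6,p_7\}$ together with $p_3p_5$; each triangle-edge deletion closes a complement triangle, and deleting $p_3p_5$ produces a diameter-three six-vertex graph with $|\rho_3|=2$, again contradicting Proposition \ref{sassresult}\eqref{sass1}. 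At $p_6$ the neighbor edges form the triangle on $\{p_4,p_5,p_7\}$, and every deletion there closes a complement triangle. Combining strong admissibility of all five vertices with Lemma \ref{strong} then yields the stated conclusion about normal nonabelian Sylow subgroups.

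The main obstacle is condition (i) for $p_6$ (equivalently $p_7$): the graph $C_{20}[p_6]$ is a connected six-vertex graph of diameter two whose complement is bipartite, so it satisfies the generalized P\'alfy condition (Theorem \ref{genp}) and cannot be dispatched by any triangle, odd-cycle, or diameter-three argument used for the other cases. Establishing that it is non-occurring must instead appeal directly to the classification of six-vertex character degree graphs in \cite{BLL}; I would locate $C_{20}[p_6]$ in that list and confirm it is one of the non-occurring graphs there.
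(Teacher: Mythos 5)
Your proposal is correct and follows essentially the same route as the paper: establish strong admissibility of $p_3$, $p_4$, $p_6$ (the other cases by symmetry) via the subgraph identifications $C_{20}[p_3]=\Gamma_{4,2}$, $C_{20}[p_4]=\Gamma_{3,3}$, P\'alfy/complement-triangle checks, and diameter-three obstructions, then invoke Lemma \ref{strong}. The one case you defer to the six-vertex classification, $C_{20}[p_6]$, is exactly how the paper handles it --- that graph is $\Sigma^L_{2,1}$, non-occurring by \cite{LM} (or \cite{BLL}) --- and your substitutions elsewhere (the cut-vertex result of \cite{MQ} for $C_{20}[p_3,\epsilon(p_1,p_2)]$ and Proposition \ref{sassresult} for $C_{20}[p_4,\epsilon(p_3,p_5)]$, where the paper cites \cite{BLL}) are equally valid.
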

\begin{proof}
We begin with the vertex $p_3$. Removing $p_3$ and all incident edges leaves us with the graph $C_{20}[p_3] = \Gamma_{4,2}$, which is non-occurring by \cite{BL} (see Theorem \ref{KT}). We cannot lose either of the edges $\epsilon(p_1, p_3)$ or $\epsilon(p_2, p_3)$, for doing so would violate P\'alfy's condition. Therefore, neither of these edges can be lost. Vertices $p_4$ and $p_5$ are symmetric, thus making edges $\epsilon(p_3, p_4)$ and $\epsilon(p_3, p_5)$ symmetric as well. Removing either of those edges would create the non-occurring graph $\Gamma_{4,3}$, and removing both edges would create the graph $C_{20}[\epsilon(p_3, p_4), \epsilon(p_3, p_5)] = C_4$. Neither of these graphs occur by Theorem \ref{KT} and Section \ref{secCDT}, respectively. Thus, $p_3$ is admissible.

For strong admissibility, we consider the removal of $p_3$, all incident edges, and one or more of the edges between two vertices adjacent to $p_3$. There are two edges in consideration: $\epsilon(p_4,p_5)$ and $\epsilon(p_1,p_2)$. If we lose the edge $\epsilon(p_4, p_5)$, our new graph violates P\'alfy's condition and hence that edge cannot be lost. Next, removing the edge $\epsilon(p_1, p_2)$ would create the graph $C_{20}[p_3, \epsilon(p_1, p_2)]$, and this graph does not occur by \cite{BLL}. Hence, $p_3$ is strongly admissible.

Next, we consider the vertex $p_4$. Removing $p_4$ gives us the graph $C_{20}[p_4] = \Gamma_{3,3}$, which is non-occurring by \cite{BL} (again, see Theorem \ref{KT}). We cannot remove any combination of the edges $\epsilon(p_4, p_5)$, $\epsilon(p_4, p_6)$, and $\epsilon(p_4, p_7)$, as doing so would violate P\'alfy's condition. This leaves us to check the edge $\epsilon(p_3, p_4)$, but we established previously that losing this edge would create the graph $\Gamma_{4,3}$. Therefore, $p_4$ is admissible.

For strong admissibility, we consider the removal of $p_4$, all incident edges, and one or more of the edges between two vertices adjacent to $p_4$. It is easy to see that the removal of one or more of the edges $\epsilon(p_5, p_6)$, $\epsilon(p_5, p_7)$, and $\epsilon(p_6, p_7)$ would violate P\'alfy's condition, and therefore cannot be lost. The only other edge to consider is between $p_3$ and $p_5$. However, the graph $C_{20}[p_4, \epsilon(p_3, p_5)]$ does not occur by \cite{BLL}. Thus, $p_4$ is strongly admissible. By symmetry, we also have that $p_5$ is strongly admissible.

We now check for admissibility of $p_6$. Removal of the vertex $p_6$ gives us the graph $C_{20}[p_6] = \Sigma^L_{2,1}$, which is non-occurring by \cite{LM} (or see \cite{BLL}). We cannot lose one or more of the edges between $p_6$ and any of the vertices $p_4$, $p_5$, and $p_7$, as doing so would create graphs that violate P\'alfy's condition. If we remove the edge $\epsilon(p_1, p_6)$, we are left with the graph $C_6$ which has been shown to not occur in Section \ref{secCDT}. Hence, $p_6$ is admissible.

For strong admissibility, we consider the removal of $p_6$, all incident edges, and one or more of the edges between two vertices adjacent to $p_6$. Similar to the case for $p_4$, it is easy to see that removing one or more of the edges $\epsilon(p_4, p_5)$, $\epsilon(p_4, p_7)$, or $\epsilon(p_5, p_7)$ from $C_{20}[p_6]$ would result in graphs that violate P\'alfy's condition. Therefore, $p_6$ is strongly admissible. Again by symmetry, we also have that $p_7$ is strongly admissible.

Since $p_i$ is strongly admissible for each $i \in \{3,4,5,6,7\}$, then there is no normal nonabelian Sylow $p_i$-subgroup for each $i$ by way of \cite{BL2} (see Lemma \ref{strong}).
\end{proof}

\begin{lemma}\label{pi2}
Assume $C_{20}=\Delta(G)$ for some finite solvable group $G$, where $|G|$ is minimal. There exists no normal nonabelian Sylow $p_j$-subgroup for $j\in\{1,2\}$.
\end{lemma}
\begin{proof}
First, we will begin by considering the vertex $p_1$. We will follow the notation from Lemma \ref{pi}. For $q=p_1$, we have $\pi=\{p_2, p_3, p_6\}$ and $\rho= \{p_4, p_5, p_7\}$. Setting $\pi_1 = \{p_6\}$ and $\pi_2 = \{p_2, p_3\}$, we choose $v = p_3$, which is adjacent to $s = p_4\in\rho$, shown to be admissible in Lemma \ref{ad2}. We also note that $w = p_7 \in \rho$ is not adjacent to $v$, and hence the result follows from Lemma \ref{pi}.

By symmetry, we can conclude that there is no normal nonabelian Sylow $p_2$-subgroup.
\end{proof}

\begin{lemma}\label{sub2}
No proper subgraph with the same vertex set as $C_{20}$ occurs as the prime character degree graph of any solvable group.
\end{lemma}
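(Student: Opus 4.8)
The plan is to follow the same strategy as the proof of Lemma~\ref{sub1}. First I would argue that every edge lying inside one of the two cliques is forced and cannot be deleted. Indeed, removing any single edge of the triangle on $\{p_1,p_2,p_3\}$ or of the complete graph on $\{p_4,p_5,p_6,p_7\}$ produces a triangle in the complement of $C_{20}$ (for example, deleting $\epsilon(p_1,p_2)$ yields the complement triangles on $\{p_1,p_2,p_4\}$ and $\{p_1,p_2,p_5\}$), and thus violates P\'alfy's condition (Lemma~\ref{PC}); most of these facts were already recorded in the proof of Lemma~\ref{ad2}. Consequently the only edges that are candidates for removal are the four \emph{bridge} edges $e_1=\epsilon(p_3,p_4)$, $e_2=\epsilon(p_3,p_5)$, $e_3=\epsilon(p_1,p_6)$, and $e_4=\epsilon(p_2,p_7)$, and it suffices to show that deleting any nonempty subset of $\{e_1,e_2,e_3,e_4\}$ gives a non-occurring graph.

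Next I would trim the $2^4-1=15$ subsets using the two evident automorphisms of $C_{20}$: the transposition $p_4\leftrightarrow p_5$ (which swaps $e_1$ and $e_2$ while fixing $e_3,e_4$) and the double transposition $p_1\leftrightarrow p_2$, $p_6\leftrightarrow p_7$ (which swaps $e_3$ and $e_4$ while fixing $e_1,e_2$). These generate a $\mathbb{Z}_2\times\mathbb{Z}_2$ action on $\{e_1,e_2,e_3,e_4\}$, and choosing one representative per orbit leaves only the deletions of $\{e_1\}$, $\{e_3\}$, $\{e_1,e_2\}$, $\{e_3,e_4\}$, $\{e_1,e_3\}$, $\{e_1,e_2,e_3\}$, $\{e_1,e_3,e_4\}$, and $\{e_1,e_2,e_3,e_4\}$ to examine.

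Then I would identify each representative subgraph and invoke the appropriate non-occurrence result. Deleting $e_1$ gives $C_{20}[\epsilon(p_3,p_4)]=\Gamma_{4,3}$ and deleting $e_3$ gives $C_{20}[\epsilon(p_1,p_6)]=C_6$, both already non-occurring (Theorem~\ref{KT} and Section~\ref{secCDT}); deleting $\{e_1,e_2\}$ gives $C_{20}[\epsilon(p_3,p_4),\epsilon(p_3,p_5)]=C_4$, non-occurring by Section~\ref{secCDT}; and deleting all four bridges disconnects $C_{20}$ into the components $\{p_1,p_2,p_3\}$ and $\{p_4,p_5,p_6,p_7\}$, which is precisely the graph of Figure~\ref{figDDNO}, non-occurring by Section~\ref{secD3}. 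The remaining four representatives, namely $\{e_3,e_4\}$, $\{e_1,e_3\}$, $\{e_1,e_2,e_3\}$, and $\{e_1,e_3,e_4\}$, all yield connected graphs of diameter three for which, using the partition of Section~\ref{secdiam3}, one computes $|\rho_3|\le 2<3$; each is therefore non-occurring by Proposition~\ref{sassresult}\eqref{sass1}. (The two three-edge deletions in fact attach the triangle to the $K_4$ by a single bridge, hence possess two cut-vertices and can alternatively be eliminated via \cite{MQ}.) Having exhausted every orbit, I would conclude that no proper subgraph of $C_{20}$ on the same vertex set occurs.

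The main obstacle is bookkeeping rather than any new idea: the delicate part is correctly identifying each of the eight representative subgraphs, especially recognizing the single- and double-edge deletions as the named graphs $\Gamma_{4,3}$, $C_6$, and $C_4$, and then verifying the diameter-three claim $|\rho_3|<3$ for the four remaining cases. In those verifications one must choose the distance-three endpoint $p$ carefully and check that \emph{every} admissible choice yields $|\rho_3|<3$, so that the contradiction with Proposition~\ref{sassresult}\eqref{sass1} is genuine; this is where a sign-off error would most likely creep in.
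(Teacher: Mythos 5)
Your proposal is correct and follows essentially the same route as the paper: reduce to the four bridge edges, quotient by the two symmetries, and verify that every nonempty deletion set yields a non-occurring graph. The only cosmetic difference is that for the mixed two-edge and the three-edge deletions the paper identifies the resulting graphs as $C_2$, $C_4$, and $C_1$ and cites their elimination in Section \ref{secCDT}, whereas you apply the diameter-three criterion of Proposition \ref{sassresult} to them directly --- which is the same underlying argument, and your computations of $|\rho_3|$ check out.
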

\begin{proof}
We have established previously that we cannot lose any edges in the complete graphs on $\{p_1, p_2, p_3\}$ and $\{p_4, p_5, p_6, p_7\}$. Thus, we are left with four edges to consider: $\epsilon(p_1, p_6)$, $\epsilon(p_2, p_7)$, $\epsilon(p_3, p_4)$, and $\epsilon(p_3, p_5)$. We will check all subgraphs for which we remove one or more of those edges.

First, recall that $p_1$ and $p_2$ are symmetric, and $p_6$ and $p_7$ are also symmetric. Thus, it follows that the edges $\epsilon(p_1, p_6)$ and $\epsilon(p_2, p_7)$ are symmetric. Similarly, recall that $p_4$ and $p_5$ are symmetric. This means that $\epsilon(p_3, p_4)$ and $\epsilon(p_3, p_5)$ are symmetric as well. Notice that $C_{20}[\epsilon(p_1, p_6)] = C_{20}[\epsilon(p_2, p_7)] = C_{6}$ and $C_{20}[\epsilon(p_3, p_4)] = C_{20}[\epsilon(p_3, p_5)] = \Gamma_{4,3}$ have both been shown not to occur.

Removing both edges $\epsilon(p_1, p_6)$ and $\epsilon(p_2, p_7)$ gives us the graph $C_{20}[\epsilon(p_1, p_6), \epsilon(p_2, p_7)] = C_{2}$ which does not occur (see Section \ref{secCDT}). In a similar fashion, losing both edges $\epsilon(p_3, p_4)$ and $\epsilon(p_3, p_5)$ gives us the graph $C_{20}[\epsilon(p_3, p_4), \epsilon(p_3, p_5)] = C_{4}$, which again does not occur. Without loss of generality (due to symmetry), we have that $C_{20}[\epsilon(p_1, p_6), \epsilon(p_3, p_4)] = C_{4}$ as well.  

If we remove any combination of three of our four viable edges, we are left with the graph $C_{1}$, which has been shown not to occur in Section \ref{secCDT}. Finally, removing all four edges will give us the disconnected graph in Figure \ref{figDDNO}, which was also shown not to occur.

Hence, no proper subgraph of $C_{20}$ with the same vertex set occurs as the prime character degree graph of any solvable group.
\end{proof}

Just as before, assuming that there exists some solvable group $G$ with $|G|$ minimal such that $\Delta(G)=C_{20}$, we again have the conclusion that there are no normal nonabelian Sylow $p$-subgroups for all $p\in\rho(G)$ (using Lemmas \ref{ad2} and \ref{pi2}). With $F$ as the Fitting subgroup of $G$, we then note that $\rho(G)=\pi(|G:F|)$. Therefore, $\rho(G)=\rho(G/\Phi(G))$. Lemma \ref{sub2} verifies there is no proper subgraph of $C_{20}$ with the same vertex set which occurs as the prime character degree graph of any solvable group, and therefore, as $|G|$ is minimal, we can conclude that $\Phi(G)$ is trivial. We can then apply Lemma III 4.4 from \cite{H} to obtain the existence of a subgroup $H$ of $G$ such that $G=HF$ and $H\cap F=1$.

\begin{lemma}\label{min2}
Assume $C_{20}=\Delta(G)$ for some finite solvable group $G$, where $|G|$ is minimal. Then the Fitting subgroup $F$ of $G$ is minimal normal in $G$.
\end{lemma}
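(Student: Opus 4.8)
The plan is to follow the proof of Lemma \ref{min1} essentially verbatim, since the ambient machinery is identical and only the clique structure of the graph changes. First I would argue by contradiction, supposing there is a normal subgroup $N$ of $G$ with $1 < N < F$. Defining $H$ and its Fitting subgroup $E$ exactly as in the paragraph preceding Lemma \ref{min1} (so that $G = HF$ and $H \cap F = 1$), I would invoke Lemma III 4.5 of \cite{H} to obtain a normal subgroup $M$ with $F = N \times M$. The same reasoning as for $C_{18}$---any prime $q \in \rho(G) \setminus \rho(G/N)$ forces a normal nonabelian Sylow $q$-subgroup of $G/N$ whose class is a formation, hence $q \in \rho(G/M)$---yields $\rho(G) = \rho(G/N) \cup \rho(G/M)$, and the counting argument involving $E$ shows that $\rho(G) \setminus (\rho(G/N) \cap \rho(G/M))$ is contained in a complete subgraph of $C_{20}$.

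The next step is to list the maximal cliques of $C_{20}$, namely $\{p_1,p_2,p_3\}$, $\{p_3,p_4,p_5\}$, $\{p_4,p_5,p_6,p_7\}$, and the two edges $\{p_1,p_6\}$ and $\{p_2,p_7\}$. Using the symmetries $p_1 \leftrightarrow p_2$, $p_6 \leftrightarrow p_7$ and $p_4 \leftrightarrow p_5$ already exploited in Lemmas \ref{ad2} and \ref{sub2}, the two edge-cliques are interchangeable, so I would reduce to four cases according to which clique contains $\rho(G) \setminus (\rho(G/N) \cap \rho(G/M))$.

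For the clique $\{p_1,p_2,p_3\}$ I would copy the Gallagher-theorem argument of case (a) in Lemma \ref{min1}: here $E$ is the Hall $\{p_1,p_2,p_3\}$-subgroup, one locates $\chi \in \Irr(G)$ with $p_1p_2p_3 \mid \chi(1)$, forces $\chi_{FE} = \theta$, and---since no prime outside $\{p_1,p_2,p_3\}$ is adjacent to all three of $p_1,p_2,p_3$---concludes $\cd(G/FE) = \{1\}$, so that $O^{p_4}(G) < G$ contradicts the admissibility of $p_4$ from Lemma \ref{ad2}. The clique $\{p_3,p_4,p_5\}$ is handled identically, the contradiction now coming from the admissible vertex $p_6$ (note that $p_6$, unlike $p_1$ and $p_2$, is admissible). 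For the $K_4$ clique $\{p_4,p_5,p_6,p_7\}$ I would enumerate the proper subsets $\rho(G/N)$ that contain $\{p_1,p_2,p_3\}$, exactly as in case (d) of Lemma \ref{min1}, and for each either observe that no graph (connected or disconnected) occurs on that vertex set or apply Theorem 5.5 of \cite{L} to force a central Sylow $p_i$-subgroup of $G/N$ with $i \in \{4,5,6,7\}$; since all of these $p_i$ are admissible, $O^{p_i}(G) < G$ is the desired contradiction.

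The main obstacle will be the edge-clique case $\{p_1,p_6\}$, precisely because $p_1$ and $p_2$ are \emph{not} admissible (they are controlled only through Lemma \ref{pi2}), so Theorem 5.5 must be prevented from landing a central Sylow subgroup on $p_1$ or $p_2$. Here the complement is forced to be the full edge $\{p_1,p_6\}$, giving $\rho(G/N) = \{p_1,p_2,p_3,p_4,p_5,p_7\}$ and $\rho(G/M) = \{p_2,p_3,p_4,p_5,p_6,p_7\}$. The plan is to route the contradiction entirely through $G/N$: the induced subgraph $C_{20}[p_6] = \Sigma^L_{2,1}$ has maximum clique size three, so the only conceivable disconnected graph on this vertex set is the $3+3$ split, which Pálfy's inequality (Theorem \ref{PI}) forbids since $3 \geq 2^3 - 1 = 7$ fails; hence $\Delta(G/N)$ must be connected, and I would eliminate every connected subgraph of $\Sigma^L_{2,1}$ on these six vertices using the six-vertex classification of \cite{BLL} together with \cite{LM}. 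Verifying this last elimination---and thereby avoiding the $p_1$-deleted side, where the admissible-prime machinery breaks down because the occurring disconnected graph $K_2 \sqcup K_4$ would only force a central Sylow at the non-admissible $p_1$---is the delicate point on which the whole lemma turns.
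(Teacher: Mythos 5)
Your proposal is correct and its skeleton is identical to the paper's: contradiction via $1<N<F$, the decomposition $F=N\times M$ from III 4.5 of \cite{H}, the identity $\rho(G)=\rho(G/N)\cup\rho(G/M)$, the observation that $\rho(G)\setminus(\rho(G/N)\cap\rho(G/M))$ lies in a clique, and a case split over the five maximal cliques $\{p_1,p_2,p_3\}$, $\{p_1,p_6\}$, $\{p_2,p_7\}$, $\{p_3,p_4,p_5\}$, $\{p_4,p_5,p_6,p_7\}$, which you identify correctly. The one genuine divergence is the clique $\{p_3,p_4,p_5\}$: the paper does \emph{not} rerun the Gallagher argument there, but instead enumerates the possible proper vertex sets $\rho(G/N)$ containing $\{p_1,p_2,p_6,p_7\}$ and disposes of each either because no graph occurs on that vertex set or because Theorem 5.5 of \cite{L} forces a central Sylow $p_4$- or $p_5$-subgroup, contradicting Lemma \ref{ad2}. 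Your Gallagher route is legitimate here and arguably cleaner: no vertex of $C_{20}$ is adjacent to all of $p_3,p_4,p_5$, so $\cd(G/FE)=\{1\}$, and since $p_6$ is admissible (you are right that one must avoid $p_1,p_2$, which are controlled only through Lemma \ref{pi2}), the conclusion $O^{p_6}(G)<G$ gives the contradiction; it trades a three-subcase enumeration for one uniform argument. Conversely, for the edge-cliques your write-up is \emph{more} detailed than the paper's: the paper simply asserts that no graph, connected or disconnected, occurs on $\{p_1,p_2,p_3,p_4,p_5,p_7\}$, whereas you correctly pin down that the only candidate disconnected graph is the $3+3$ split killed by P\'alfy's inequality (Theorem \ref{PI}) and that the connected candidates are spanning subgraphs of $\Sigma^L_{2,1}$ to be ruled out via \cite{LM} and \cite{BLL}; flagging this verification as the delicate point is fair, but it is a gap shared with (indeed, left implicit by) the published argument rather than a defect of your plan.
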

\begin{proof}
We follow an argument almost identical to that which was done in Lemma \ref{min1}. As before, we can follow the assumptions and then can define $H$ as above. Also, set $E$ to be the Fitting subgroup of $H$.

Next, we proceed by contradiction. Thus, we suppose that there exists some normal subgroup $N$ of $G$ such that $1<N<F$. We aim to show that no such $N$ can exist.

Notice once more by \cite{H}, particularly III 4.5, that we know there exists some normal subgroup $M$ of $G$ such that $F=N\times M$, and since both $N$ and $M$ are nontrivial, we have that $\rho(G/N)\subset\rho(G)$ and $\rho(G/M)\subset\rho(G)$. For any prime $q\in\rho(G)\setminus\rho(G/N)$, it is known that $G/N$ must have a normal nonabelian Sylow $q$-subgroup whose class is a formation. Thus, it is required that $q\in\rho(G/M)$. Hence, we get that $\rho(G)=\rho(G/N)\cup\rho(G/M)$.

So for this $q\in\rho(G)\setminus\rho(G/N)$, notice that $q\not\in\rho(G/F)=\rho(H)$, and so $E$ must contain the Sylow $q$-subgroup of $H$. Since $q\in\rho(G)$, however, we must have that $q$ divides $|H|$, and so $q$ divides $|E|$ too. Then since $\cd(G)$ contains a degree divisible by all the prime divisors of $|EF:F|=|E|$, we get that $\rho(G)\setminus(\rho(G/N)\cap\rho(G/M))$ lies in a complete subgraph of $\Delta(G)$. Hence, $\rho(G)\setminus(\rho(G/N)\cap\rho(G/M))$ must lie in one of the following sets: (a) $\{p_1, p_2, p_3\}$, (b) $\{p_1, p_6\}$, (c) $\{p_2, p_7\}$, (d) $\{p_3, p_4, p_5\}$, or (e) $\{p_4, p_5, p_6, p_7\}$.

Suppose (a) occurs; that is, suppose $\rho(G)\setminus(\rho(G/N)\cap\rho(G/M)) \subseteq \{p_1, p_2, p_3\}$. Following \cite{L3}, we see that $E$ has a Hall $\{p_1,p_2,p_3\}$-subgroup of $H$. One can then conclude that $E$ is in fact the Hall $\{p_1,p_2,p_3\}$-subgroup since $\cd(G)$ has a degree that is divisible by all the primes dividing $|E|$ and $|E|$ is divisible by no other primes. Next, we can find a character $\chi\in\Irr(G)$ with $p_1p_2p_3$ dividing $\chi(1)$. Letting $\theta$ be an irreducible constituent of $\chi_{FE}$, one can see that $\chi(1)/\theta(1)$ divides $|G:FE|$ and that $\chi(1)$ is relatively prime to $|G:FE|$. This forces $\chi_{FE}=\theta$. Since $p_1$, $p_2$, and $p_3$ all divide $\theta(1)$, and the only possible primes that could divide a character in $\cd(G/FE)$ are $p_4$, $p_5$, $p_6$, or $p_7$, we get that $\cd(G/FE)=\{1\}$ by way of Gallagher's Theorem. Hence $G/FE$ is abelian. For instance, this then yields $O^{p_4}(G)<G$, a contradiction as $p_4$ is admissible.

Suppose (b) occurs; that is, suppose $\rho(G)\setminus(\rho(G/N)\cap\rho(G/M)) \subseteq \{p_1, p_6\}$. This implies that $\{p_2, p_3, p_4, p_5, p_7\} \subseteq \rho(G/N)\cap\rho(G/M)$. Since $\rho(G/N)$ and $\rho(G/M)$ are both proper in $\rho(G)$, and $\rho(G)=\rho(G/N)\cup\rho(G/M)$, then without loss of generality, we must have that $\rho(G/N)=\{p_1,p_2,p_3,p_4,p_5,p_7\}$ and $\rho(G/M)=\{p_2,p_3,p_4,p_5,p_6,p_7\}$. However, this is a contradiction because no graph can occur with the vertex set given to $\rho(G/N)$ as defined above, regardless if it is connected or disconnected. By symmetry, no graph can occur when $\rho(G)\setminus(\rho(G/N)\cap\rho(G/M)) \subseteq \{p_2, p_7\}$, thus eliminating case (c).

Suppose (d) occurs; that is, suppose $\rho(G)\setminus(\rho(G/N)\cap\rho(G/M)) \subseteq \{p_3, p_4, p_5\}$. This implies that $\{p_1, p_2, p_6, p_7\} \subseteq \rho(G/N)\cap\rho(G/M)$. Since $p_3\in\rho(G)=\rho(G/N)\cup\rho(G/M)$, then we know that $p_3\in\rho(G/N)$ or $p_3\in\rho(G/M)$. Without loss of generality, we can suppose that $p_3\in\rho(G/N)$. Since $\rho(G/N)$ is proper in $\rho(G)$, we then have three options as to what $\rho(G/N)$ can be: (i) $\{p_1,p_2,p_3,p_6,p_7\}$, (ii) $\{p_1,p_2,p_3,p_4,p_6,p_7\}$, or (iii) $\{p_1,p_2,p_3,p_5,p_6,p_7\}$. For (i), we have that $\rho(G/N)=\{p_1,p_2,p_3,p_6,p_7\}$, and one can see that no connected graph with this vertex set occurs as the prime character degree graph of any solvable group. The only possibility is for the disconnected graph to occur, with complete components $\{p_1,p_2,p_3\}$ and $\{p_6,p_7\}$. Then by Theorem 5.5 from \cite{L} we know that $G/N$ has a central Sylow $p_4$-subgroup or a central Sylow $p_5$-subgroup, in which case $O^{p_4}(G)<G$ or $O^{p_5}(G)<G$, respectively. This is our contradiction, since both $p_4$ and $p_5$ have been shown to be admissible in Lemma \ref{ad2}, and so $O^{p_4}(G)=G$ and $O^{p_5}(G)=G$. For (ii), we have $\rho(G/N)=\{p_1,p_2,p_3,p_4,p_6,p_7\}$. No graph can occur with the vertex set given to $\rho(G/N)$ as defined above, regardless if it is connected or disconnected. Recall that the vertices $p_4$ and $p_5$ are symmetric. Then for (iii), this argument follows identically to (ii).

Suppose (e) occurs; that is, suppose $\rho(G)\setminus(\rho(G/N)\cap\rho(G/M)) \subseteq \{p_4, p_5, p_6, p_7\}$. This implies that $\{p_1, p_2, p_3\} \subseteq \rho(G/N)\cap\rho(G/M)$. Similar to (d) above, since $p_7\in\rho(G)=\rho(G/N)\cup\rho(G/M)$, then we know that $p_7\in\rho(G/N)$ or $p_7\in\rho(G/M)$. Without loss of generality, we can suppose that $p_7\in\rho(G/N)$. Since $\rho(G/N)$ is proper in $\rho(G)$, we then have seven options as to what $\rho(G/N)$ can be: (i) $\{p_1,p_2,p_3,p_7\}$, (ii) $\{p_1,p_2,p_3,p_4,p_7\}$, (iii) $\{p_1,p_2,p_3,p_5,p_7\}$, (iv) $\{p_1,p_2,p_3,p_6,p_7\}$, (v) $\{p_1,p_2,p_3,p_4,p_5,p_7\}$, (vi) $\{p_1,p_2,p_3,p_4,p_6,p_7\}$, or (vii) $\{p_1,p_2,p_3,p_5,p_6,p_7\}$. For (i), we have that $\rho(G/N)=\{p_1,p_2,p_3,p_7\}$, in which case this forces $\rho(G/M)=\{p_1,p_2,p_3,p_4,p_5,p_6\}$. Notice that neither the connected nor disconnected graph with vertex set $\rho(G/M)$ occurs as the prime character degree graph of any solvable group. For (ii), notice that $\rho(G/N)=\{p_1,p_2,p_3,p_4,p_7\}$, and the only graph that can arise is the disconnected graph with complete components $\{p_1,p_2,p_3\}$ and $\{p_4,p_7\}$. Applying Theorem 5.5 from \cite{L}, we then get that $G/N$ has a central Sylow $p_5$-subgroup or a central Sylow $p_6$-subgroup. This implies that $O^{p_5}(G)<G$ or $O^{p_6}(G)<G$, which is a contradiction since both $p_5$ and $p_6$ are admissible, in which case $O^{p_5}(G)=O^{p_6}(G)=G$. For (iii), the argument follows (ii) by symmetry. For (iv), $\rho(G/N)=\{p_1, p_2, p_3, p_6, p_7\}$. Once again, the only graph that can occur is the disconnected graph with complete components $\{p_1,p_2,p_3\}$ and $\{p_6,p_7\}$. We then apply Theorem 5.5 from \cite{L} and get that $G/N$ has a central Sylow $p_4$-subgroup or a central Sylow $p_5$-subgroup. This gives $O^{p_4}(G)<G$ or $O^{p_5}(G)<G$, which is a contradiction since both $p_4$ and $p_5$ are admissible, meaning that $O^{p_4}(G)=O^{p_5}(G)=G$. For (v), (vi), and (vii), notice that there is no graph, connected or disconnected, that can occur with each respective vertex set.

Hence, no such $N$ can occur, and we get our desired conclusion that the Fitting subgroup $F$ is minimal normal in $G$.
\end{proof}

\begin{proposition}\label{con2}
The graph $C_{20}$ does not occur as the prime character degree graph for any solvable group.
\end{proposition}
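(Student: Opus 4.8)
The plan is to follow the template established for $C_{18}$ in Proposition \ref{con1} almost verbatim, closing the argument with a single application of Lemma \ref{thefinal}. I would begin by assuming for contradiction that $C_{20}=\Delta(G)$ for some finite solvable group $G$ with $|G|$ minimal, so that all of the accumulated machinery of this subsection is available: Lemmas \ref{ad2} and \ref{pi2} guarantee that $G$ has no normal nonabelian Sylow $p$-subgroup for any $p\in\rho(G)$, and Lemma \ref{min2} guarantees that the Fitting subgroup $F$ of $G$ is minimal normal in $G$.

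Next I would check that $G$ satisfies the remaining hypotheses of Lemma \ref{thefinal}. The graph $C_{20}$ satisfies P\'alfy's condition (Lemma \ref{PC}) and has seven vertices, so the size requirement $n\geq5$ is met. The condition that $\Delta(N)$ and $\Delta(G/N)$ be proper subgraphs of $C_{20}$ for every proper normal subgroup $N$ follows from the minimality of $|G|$ together with Lemma \ref{sub2}: since $N$ and $G/N$ have order strictly smaller than $|G|$, neither can realize $C_{20}$, while Lemma \ref{sub2} rules out any proper subgraph on the full vertex set, so each of $\Delta(N)$ and $\Delta(G/N)$ is forced to be a proper subgraph.

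The only genuinely new input is to exhibit the four vertices required by Lemma \ref{thefinal}. Reading off the adjacencies from Figure \ref{figC20}, I would take $a=p_1$, $b=p_2$, $c=p_6$, and $d=p_4$. One verifies directly that $a=p_1$ is adjacent to $c=p_6$, that $b=p_2$ is not adjacent to $c=p_6$, and that $a=p_1$ is not adjacent to $d=p_4$; both $c=p_6$ and $d=p_4$ are admissible by Lemma \ref{ad2}. Lemma \ref{thefinal} then applies and yields the contradiction, completing the proof. The symmetry of $C_{20}$ makes several other choices equally valid, for instance $a=p_3$, $b=p_1$, $c=p_4$, $d=p_6$.

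I do not expect any real obstacle in the proposition itself: it is a short bookkeeping application of Lemma \ref{thefinal}, and the only point requiring a moment's care is confirming that the chosen quadruple satisfies all three adjacency and non-adjacency constraints simultaneously with both $c$ and $d$ admissible. The substantive difficulty has already been discharged in Lemma \ref{min2}, whose lengthy case analysis over the possible vertex sets of $\rho(G/N)$ is precisely what makes the minimal normality of $F$ available at this stage.
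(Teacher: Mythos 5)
Your proposal is correct and follows essentially the same route as the paper: assume a minimal counterexample, invoke Lemmas \ref{ad2}, \ref{pi2}, and \ref{min2}, and finish with a single application of Lemma \ref{thefinal}. The only difference is the choice of quadruple: your $a=p_1$, $b=p_2$, $c=p_6$, $d=p_4$ satisfies all three adjacency/non-adjacency requirements with $c$ and $d$ admissible, whereas the paper's printed choice $a=p_1$, $b=p_2$, $c=p_4$, $d=p_5$ does not (in $C_{20}$ the vertex $p_1$ is adjacent only to $p_2$, $p_3$, and $p_6$, so $a$ is not adjacent to $c=p_4$), so your verification is in fact the more careful one and repairs what appears to be a typo in the paper.
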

\begin{proof}
For the sake of contradiction, suppose that $C_{20}=\Delta(G)$ for some finite solvable group $G$, where $|G|$ is minimal. Notice that the Fitting subgroup $F$ of $G$ is minimal normal in $G$ by Lemma \ref{min2}. Next, take the vertices $a=p_1$, $b=p_2$, $c=p_4$, and $d=p_5$, where we observe that $p_4$ and $p_5$ are admissible by Lemma \ref{ad2}. Employing Lemma \ref{thefinal} grants our contradiction, and therefore the graph $C_{20}$ is not the prime character degree graph for any solvable group, as desired.
\end{proof}

%%%%%%%%%%%%%%%%%%%%%%%%%%%%%%%%%%%%%%%%%
\subsection{Unclassified graphs}\label{secCmaybe}
%%%%%%%%%%%%%%%%%%%%%%%%%%%%%%%%%%%%%%%%%

As mentioned, there are thirty-three graphs here that are unclassified (see Figures \ref{figC19} and \ref{figCM}). Setting
$$
\mathcal{J}:=\{8,13,15,21,22,23,24,25,x,52~|~27\leq x\leq49\},
$$
we then have the graphs $C_{19}$ (Figure \ref{figC19}) and $C_j$ with $j\in\mathcal{J}$ (Figure \ref{figCM}).

\begin{figure}[htb]
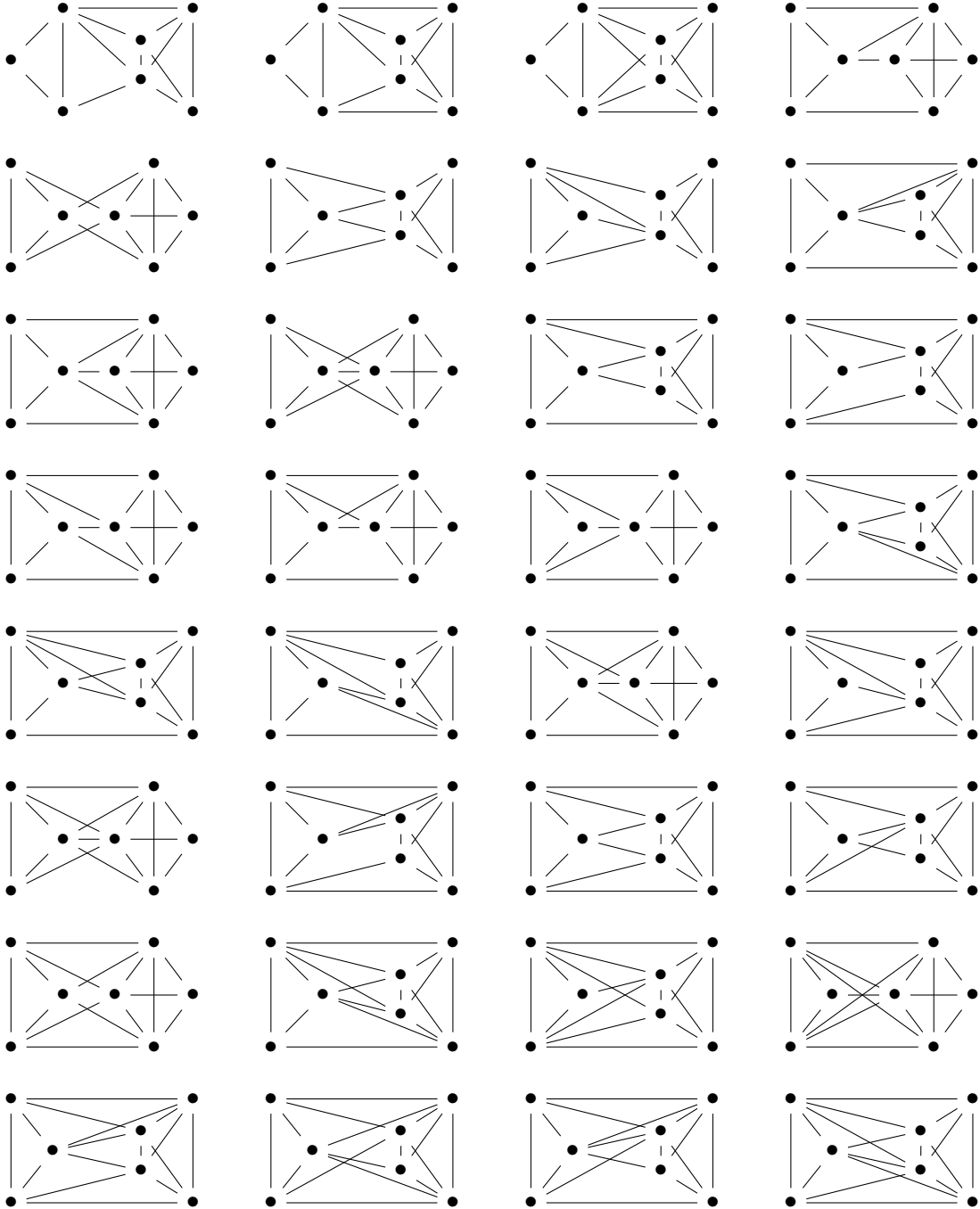

    \centering
$
% [inline block 0: 2 envs, 26393 chars -> data_tex | \begin{tikzpicture}[scale=2] \node (1) at (0,1) {$\bullet$};...]

$
    \caption{The graphs $C_j$ for $j\in\mathcal{J}$}
    \label{figCM}
\end{figure}

One of the main reasons these graphs went unclassified is that they have a subgraph with six vertices that occurs. For example, the graph $C_8$ has the lone diameter three graph constructed in \cite{L2} as a subgraph. Having subgraphs that occur often leads to issues with our method because we rely on vertices being admissible. This problem then compounds rapidly, because once we know $C_8$ cannot be tamed, then any graph that has $C_8$ as a subgraph can also not be tamed (like $C_{13}$ for instance). More work is constantly being done in this area, so one could hope that more graphs can be classified in the future. Papers like \cite{SS} however only cooperate with graphs that have an even number of vertices, so their results could not be applied to our remaining graphs. On the other hand, a paper like \cite{Z} does relate to some of our remaining graphs, but could not be applied based on our current approach.

We have also seen that many of these graphs that went unclassified rely upon the nine graphs with six vertices that were not classified in \cite{BLL}. In particular, we found that one of these graphs with six vertices, denoted $\Sigma_{2,1}^R$ from \cite{DLM}, has far-reaching consequences. Hypothetically, if one could conclude that $\Sigma_{2,1}^R$ does not occur, then this will imply that $C_{21}$, $C_{22}$, and $C_{32}$ all do not occur. One of the issues with this graph $\Sigma_{2,1}^R$ is that it does not quite satisfy the technical hypothesis from \cite{BLL2}; further methods will need to be explored.

\appendix

\section{Connected Graphs with Cliques of One and Six}\label{AA}

$$
% [inline block 1: 29 envs, 71476 chars -> data_tex | \begin{tikzpicture}[scale=2] \node (1a) at (0,.5) {$\bullet$};...]

$$

\end{document}